%% generalized Weil's RH

\documentclass[a4paper,12pt,reqno]{amsart}
\usepackage[english]{babel}
\usepackage[T1]{fontenc}
\usepackage[left=1in,right=1in,top=1.2in,bottom=1.2in]{geometry}
\usepackage{times}
\usepackage{microtype}

\usepackage{commath,amssymb,amscd,mathrsfs,mathtools,dsfont,bbm,multirow,array,caption,comment,pifont}
\usepackage[all]{xy}
\usepackage{enumerate}
\usepackage{longtable}

\usepackage{cite}			%won't show labels for citation

\linespread{1.2}
\setlength{\parskip}{.2em}
%\usepackage{setspace}
%\onehalfspacing
\allowdisplaybreaks

\usepackage[usenames,dvipsnames,svgnames,table]{xcolor}
\definecolor{red}{RGB}{255,25,25}
\definecolor{blue}{RGB}{25,50,200}
\usepackage{tikz-cd}
\usepackage[pagebackref,linktocpage]{hyperref}

%/usepackage[linktocpage]{hyperref}
\hypersetup{
pdftitle={},				% title
pdfauthor={Hu--Truong},	% author
colorlinks=true,			% false: boxed links; true: colored links
linkcolor=red,			% color of internal links (change box color with linkbordercolor)
citecolor=MidnightBlue,	% color of links to bibliography
filecolor=magenta,		% color of file links
urlcolor=MidnightBlue	% color of external links
}

\usepackage{cleveref}	% after hyperref!!

%%%%%%%%%%%%%%%%%%%%%%%%%%%%%%%%%%%%%%%%%%%%%%%%%%%%%%%%%%

%% Theorem/Definition/Remark Styles

\newtheorem{theorem}{Theorem}[section]
\crefname{theorem}{Theorem}{Theorems}
\newtheorem{lemma}[theorem]{Lemma}
\crefname{lemma}{Lemma}{Lemmas}
\newtheorem{proposition}[theorem]{Proposition}
\crefname{proposition}{Proposition}{Propositions}

\crefname{prop}{Proposition}{Propositions}

\crefname{corollary}{Corollary}{Corollaries}

\crefname{cor}{Corollary}{Corollaries}
\newtheorem{conjecture}[theorem]{Conjecture}
\crefname{conjecture}{Conjecture}{Conjectures}

\crefname{conj}{Conjecture}{Conjectures}
\newtheorem*{conj*}{Conjecture}
\crefname{conj}{Conjecture}{Conjectures}

\crefname{conjA}{Conjecture}{Conjecture}

\crefname{conjB}{Conjecture}{Conjecture}

\newtheorem{conjC}{Conjecture}
\crefname{conjC}{Conjecture}{Conjecture}

\newtheorem{conjDk}{Conjecture}
\crefname{conjDk}{Conjecture}{Conjecture}

\newtheorem{conjD}{Conjecture}
\crefname{conjD}{Conjecture}{Conjecture}

\crefname{conjH}{Conjecture}{Conjecture}

\newtheorem{conjGr}{Conjecture}
\crefname{conjGr}{Conjecture}{Conjecture}

%\newtheorem{conjSGr}{Conjecture}
%\crefname{conjSGr}{Conjecture}{Conjecture}
%\renewcommand{\theconjSGr}{$SG_r$}

\theoremstyle{definition}
\newtheorem{definition}[theorem]{Definition}
\crefname{definition}{Definition}{Definitions}

\crefname{defn}{Definition}{Definitions}
\newtheorem{example}[theorem]{Example}
\crefname{example}{Example}{Examples}

\crefname{notation}{Notation}{Notation}
\newtheorem*{notation*}{Notation}
\crefname{notation}{Notation}{Notation}

\crefname{problem}{Problem}{Problems}

\crefname{question}{Question}{Questions}

\crefname{condition}{Condition}{Conditions}

\crefname{assumption}{Assumption}{Assumptions}

\crefname{propGr}{Property}{Property}

\theoremstyle{remark}

\crefname{rmk}{Remark}{Remarks}
\newtheorem*{rmk*}{Remark}
\crefname{rmk}{Remark}{Remarks}
\newtheorem{remark}[theorem]{Remark}
\crefname{remark}{Remark}{Remarks}

\crefname{fact}{Fact}{Facts}
\newtheorem{claim}[theorem]{Claim}
\crefname{claim}{Claim}{Claims}
\newtheorem*{claim*}{Claim}
\crefname{claim}{Claim}{Claims}

\crefname{step}{Step}{Steps}

\crefname{case}{Case}{Cases}

%%%%%%%%%%%%%%%%%%%%%%%%%%%%%%%%%%%%%%%%%%%%%%%%%%%%%%%%%%

%% Personal Preferences

\numberwithin{equation}{section}
%\numberwithin{equation}{theorem}

%\newcommand{\eps}{\varepsilon}

%\renewcommand{\phi}{\varphi}

%\renewcommand{\vdash}{\,\scalebox{1.1}[1.6] {$\shortmid$} \mkern-3.5mu\raisebox{0.15ex}{$-$}\mkern-3.5mu\raisebox{0.15ex}{$-$}\,}

\newcommand{\wtilde}[1]{\widetilde{#1}}

\newcommand{\ol}[1]{\overline{#1}}

\newcommand{\dyncomp}{\,{\scriptstyle \Diamond}\,}
\newcommand{\dynpb}{\text{\ding{72}}}
\newcommand{\ratmap}{\dashrightarrow}
\newcommand{\injmap}{\hookrightarrow}

\newcommand{\lra}{\longrightarrow}

\newcommand{\bbP}{\mathbb{P}}

\newcommand{\bC}{\mathbf{C}}

\newcommand{\bF}{\mathbf{F}}

\newcommand{\bN}{\mathbf{N}}

\newcommand{\bQ}{\mathbf{Q}}
\newcommand{\bR}{\mathbf{R}}

\newcommand{\bZ}{\mathbf{Z}}

\newcommand{\bk}{\mathbf{K}}

\newcommand{\sT}{\mathsf{T}}

\newcommand{\alg}{\operatorname{alg}}

\newcommand{\CH}{\mathsf{CH}}

\newcommand{\Char}{\operatorname{char}}

\newcommand{\cl}{\operatorname{cl}}

\newcommand{\Corr}{\mathsf{Corr}}

\newcommand{\Eff}{\operatorname{Eff}}
\newcommand{\End}{\operatorname{End}}
\newcommand{\et}{{\textrm{\'et}}}

\newcommand{\Hom}{\operatorname{Hom}}
\newcommand{\id}{\operatorname{id}}

\newcommand{\isom}{\simeq}
\newcommand{\Ker}{\operatorname{Ker}}

\newcommand{\Nef}{\operatorname{Nef}}
\newcommand{\N}{\mathsf{N}}
\newcommand{\Nk}{\mathsf{N}^k(X)_\bR}

\newcommand{\num}{\equiv}

\newcommand{\PL}{\operatorname{PL}}
\newcommand{\PsEff}{\overline{\operatorname{Eff}}}

\newcommand{\pr}{\operatorname{pr}}

\newcommand{\rat}{\operatorname{rat}}

\newcommand{\Supp}{\operatorname{Supp}}

\newcommand{\tr}{\operatorname{tr}}
\newcommand{\Tr}{\operatorname{Tr}}

\newcommand{\Z}{\mathsf{Z}}

%%%%%%%%%%%%%%%%%%%%%%%%%%%%%%%%%%%%%%%%%%%%%%%%%%%%%%%%%%

\begin{document}

%\title[Dynamical degrees of correspondences]{Dynamical degrees of certain correspondences of abelian varieties and of surfaces dominated by abelian surfaces}

%\title[Dynamical degrees of correspondences and semisimplicity]{Dynamical degree comparison conjecture and semisimplicity for abelian varieties and Kummer surfaces}

\title[A dynamical approach to generalized WRH and semisimplicity]{A dynamical approach to generalized Weil's Riemann hypothesis and semisimplicity}

\author{Fei Hu}
\address{Department of Mathematics, University of Oslo, P.O. Box 1053, Blindern, 0316 Oslo, Norway \endgraf
Department of Mathematics, Harvard University, Science Center 226, 1 Oxford Street, Cambridge, MA 02138, USA}

\email{\href{mailto:hf@u.nus.edu}{\tt hf@u.nus.edu}}
%\email{\href{mailto:fhu@math.uio.no}{\tt fhu@math.uio.no}}
%\urladdr{\url{https://sites.google.com/view/feihu90s/}}

\author{Tuyen Trung Truong}
\address{Department of Mathematics, University of Oslo, P.O. Box 1053, Blindern, 0316 Oslo, Norway}

\email{\href{mailto:tuyentt@math.uio.no}{\tt tuyentt@math.uio.no}}

\begin{abstract}
Let $X$ be a smooth projective variety over an algebraically closed field of arbitrary characteristic, and $f$ a dynamical correspondence of $X$.
In 2016, the second author conjectured that the dynamical degrees of $f$ defined by the pullback actions on $\ell$-adic \'etale cohomology groups and on numerical cycle class groups are equivalent, which we call the dynamical degree comparison (DDC) conjecture.
It contains the generalized Weil's Riemann hypothesis (for polarized endomorphisms) as a special case.

To proceed, we introduce the so-called \cref{conj:Gr}, which is a quantitative strengthening of the standard conjecture \hyperref[conj:C]{$C$} and holds on abelian varieties and Kummer surfaces.
We prove that for arbitrary varieties, \cref{conj:Gr} yields the generalized Weil's Riemann hypothesis.
Moreover, \cref{conj:Gr} plus the standard conjecture \hyperref[conj:D]{$D$} imply the so-called norm comparison (NC) conjecture, whose consequences include the DDC conjecture and the generalized semisimplicity conjecture (for polarized endomorphisms).
As an application, we obtain new results on the DDC conjecture for abelian varieties and Kummer surfaces, and the generalized semisimplicity conjecture for Kummer surfaces.
Finally, we also obtain a similar comparison result for effective finite correspondences of abelian varieties.
\end{abstract}

\subjclass[2020]{
14G17,	%Positive characteristic ground fields
37P25,	%Dynamical systems over finite ground fields
14K05,	%Abelian varieties and schemes, Algebraic theory
14J28,	%$K3$ surfaces and Enriques surfaces
14C25,	%Algebraic cycles
14F20. 	%Étale and other Grothendieck topologies and (co)homologies
}

%\date{\today}

\keywords{positive characteristic, dynamical degree, correspondence, algebraic cycle, \'etale cohomology, abelian variety, Kummer surface, Weil's Riemann hypothesis, standard conjectures, semisimplicity}

\thanks{The authors are supported by Young Research Talents grant \#300814 from the Research Council of Norway.}

\maketitle

%\tableofcontents

%%%%%%%%%%%%%%%%%%%%%%%%%%%%%%%%%%%%%%%%%%%%%%%%%%%%%%%%%%

\section{Introduction}
\label{section:intro}

%%%%%%%%%%%%%%%%%%%%%%%%%%%%%%%%%%%%%%%%%%%%%%%%%%%%%%%%%%

Unless otherwise stated, $\bk$ is an algebraically closed field of arbitrary characteristic and $X$ is a smooth projective variety of dimension $n$ over $\bk$.
A correspondence $f$ of $X$ is an algebraic cycle of codimension $n$ with rational coefficients, or its equivalence class, on $X\times X$.
There are natural pullback and pushforward actions of correspondences on Weil cohomology groups $H^{\bullet}(X)$ and cycle class groups $\Z^{\bullet}(X)_\bQ/\!\sim$ modulo certain equivalence relation $\sim$ (see \Cref{subsec:operation}).
Given two correspondences $f$, $g$ of $X$, their composition $g\circ f$ is a well-defined correspondence.
If moreover $f$ and $g$ are dynamical correspondences, which are natural generalizations of dominant rational maps (see \cref{def:dyn-corr}), we can define their dynamical composition $g\dyncomp f$, similar to the composition of dominant rational maps.
We denote by $f^{\circ m}$ and $f^{\Diamond m}$ the $m$-th (dynamical) iterates of $f$, respectively.
See \Cref{subsec:dyn-corr} for details.

\subsection{Conjectures in algebraic dynamics}

Throughout, we always choose $\ell$-adic \'etale cohomology as our Weil cohomology theory.
In other words, we let $H^{\bullet}(X) \coloneqq H^{\bullet}_{\et}(X, \bQ_\ell)$ with $\ell\neq \Char(\bk)$.
To a dynamical correspondence $f$ of $X$, we can assign two invariants as follows.

\begin{definition}[Cohomological \& numerical dynamical degrees]
\label{def:lambda-chi}
Fix a field isomorphism $\iota \colon \ol\bQ_\ell \isom \bC$ so that we may speak of the complex absolute value of an element of $\ol\bQ_\ell$.
Precisely, for any $\alpha \in \ol\bQ_\ell$, we set
\[
|\alpha|_{\iota} \coloneqq |\iota(\alpha)|.
\]
We endow the finite-dimensional $\bQ_\ell$-vector space $H^{\bullet}(X)$ with a norm $\norm{\cdot}_{\iota}$.
For any $0\le i\le 2n$, the {\it $i$-th cohomological dynamical degree} $\chi_i(f)_{\iota}$ of $f$ (with respect to $\iota$) is defined by
\begin{equation}
\label{eq:chi}
\chi_i(f)_{\iota} \coloneqq \limsup_{m\to \infty} \big\|{(f^{\Diamond m})^*|_{H^i(X)}}\big\|_{\iota}^{1/m}.
\end{equation}
On the other hand, for any $0\le k\le n$, we define the {\it $k$-{th} numerical dynamical degree} $\lambda_k(f)$ of $f$ as
\begin{equation}
\label{eq:lambda}
\lambda_k(f) \coloneqq \lim_{m\to \infty} \big\|{(f^{\Diamond m})^*|_{\Nk}}\big\|^{1/m},
\end{equation}
where we choose an arbitrary norm $\norm{\cdot}$ on the finite-dimensional real vector space
\[
\Nk \coloneqq \N^k(X) \otimes_\bZ \bR \coloneqq (\Z^k(X)/\!\num) \otimes_\bZ \bR.
\]
\end{definition}

\begin{remark}
\label{rmk:lambda-chi}
We note that in the above definition of $\lambda_k(f)$, the limit exists, independent of the choice of the norm on $\N^k(X)_\bR$, and is equal to the following limit
\[
\lim_{m\to \infty} ((f^{\Diamond m})^*H_X^k \cdot H_X^{n-k})^{1/m} = \lim_{m\to \infty} (f^{\Diamond m} \cdot \pr_1^*H_X^{n-k} \cdot \pr_2^*H_X^{k})^{1/m},
\]
where $H_X$ is an (arbitrary) ample divisor on $X$ and the $\pr_j$ are the projections from $X\times X$ to $X$; this is proved in \cite[Theorem~1.1(1)]{Truong20}. See also \cite[Theorems~1 and 2]{Dang20} for the case when $f$ is (the graph of) a dominant rational self-map of $X$.
For complex projective varieties (or more generally, compact K\"ahler manifolds), this was established earlier by Dinh and Sibony \cite{DS08}.
The difficulty in showing the existence of the limit lies in the fact that the dynamical composition does not necessarily commute with the pullback action.

We also note that the $\lambda_k(f)$ are defined intrinsically, while apparently, the $\chi_i(f)_{\iota}$ may depend on the field isomorphism $\iota$.
Another difference is worth mentioning: we now know that the $\lambda_k(f) < \infty$ are birational invariants (see \cite[Theorem~1.1(2)]{Truong20}), but it is still unknown if the $\chi_i(f)_{\iota}$ are always finite or birational invariants.
\end{remark}

Inspired by results in Complex Dynamics (see \cite{DS17} for a survey), in Esnault--Srinivas \cite{ES13}, and Weil's Riemann hypothesis (now Deligne's theorem \cite{Deligne74}), the second author proposed the following dynamical degree comparison conjecture (cf.~\cite[Question~2]{Truong}).

\begin{conjecture}[Dynamical Degree Comparison]
\label{conj:DDC}
Let $X$ be a smooth projective variety of dimension $n$ over $\bk$, and $f$ a dynamical correspondence of $X$.
Then $\chi_{2k}(f)_{\iota} = \lambda_k(f)$ for any $0\le k\le n$.
\end{conjecture}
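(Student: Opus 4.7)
The plan is to split the equality $\chi_{2k}(f)_\iota = \lambda_k(f)$ into two inequalities, each relying on a different piece of input. For $\lambda_k(f) \le \chi_{2k}(f)_\iota$ I would use the standard conjecture $D$ (numerical equals homological equivalence) together with the cycle class map. For the reverse inequality $\chi_{2k}(f)_\iota \le \lambda_k(f)$ I would establish a generalized Weil Riemann hypothesis at codimension $k$ --- bounding every eigenvalue of $(f^{\Diamond m})^*$ on $H^{2k}(X)$ by $\lambda_k(f)^m$ --- via a quantitative strengthening of the K\"unneth standard conjecture $C$, namely the property $G_r$ advertised in the abstract. Once a spectral-radius bound on $H^{2k}(X)$ is in hand, Gelfand's formula applied to the iterate delivers the $\limsup$ bound.

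For the easy direction, after $\bC$-linearization via $\iota$, the cycle class map
\[
\mathrm{cl} \colon \N^k(X)_\bR \otimes_\bR \bC \longrightarrow H^{2k}(X) \otimes_{\bQ_\ell, \iota} \bC
\]
intertwines the pullback actions of all $(f^{\Diamond m})^*$. Under standard conjecture $D$ this map is injective, so $\N^k(X)_\bR \otimes_\bR \bC$ identifies with an invariant subspace of the right-hand side, and any compatible choice of norms yields
\[
\big\|(f^{\Diamond m})^*|_{\N^k(X)_\bR}\big\| \le C \cdot \big\|(f^{\Diamond m})^*|_{H^{2k}(X)}\big\|_\iota
\]
with $C$ independent of $m$. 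Taking $m$-th roots and passing to the limit delivers $\lambda_k(f) \le \chi_{2k}(f)_\iota$.

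For the hard direction, one needs eigenvalue control on $H^{2k}(X)$ that does not see whether a class is algebraic. The strategy is to use the property $G_r$ to produce a quantitative K\"unneth-type decomposition of the diagonal $\Delta_X$ in $\CH^n(X \times X)_\bR$ and apply it to the graph $\Gamma_{f^{\Diamond m}}$; this expresses matrix entries of $(f^{\Diamond m})^*$ on $H^{2k}(X)$ in terms of intersection numbers of the form $(f^{\Diamond m} \cdot \pr_1^* H_X^{n-k} \cdot \pr_2^* H_X^{k})$, which are exactly the quantities governing $\lambda_k(f)^m$ by the characterization recalled in the remark above. A Khovanskii--Teissier / Hodge-index argument on $X \times X$ then converts these intersection-theoretic bounds into spectral-radius bounds, up to subexponential factors that disappear on taking $m$-th roots.

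The principal obstacle is establishing $G_r$ itself. As a strictly quantitative refinement of the K\"unneth standard conjecture $C$, it is expected to be tractable only where the cycle theory on $X \times X$ is transparent enough to control all K\"unneth projectors --- notably on abelian varieties via the Beauville / Fourier--Mukai decomposition and on Kummer surfaces by descent from abelian surfaces --- which matches precisely the unconditional cases announced in the abstract. A secondary subtlety is that $\chi_{2k}(f)_\iota$ is a priori only a $\limsup$ possibly depending on $\iota$; the norm-comparison framework must be set up carefully enough to force it to be a genuine limit independent of $\iota$ as a byproduct.
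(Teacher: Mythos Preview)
There is a genuine gap in your hard direction, and a misallocation of where Conjecture~$D$ is actually needed.

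For the easy direction $\lambda_k(f)\le\chi_{2k}(f)_\iota$, you invoke $D$ to embed $\N^k(X)_\bC$ into $H^{2k}(X)\otimes_\iota\bC$. The paper (Lemma~4.1) shows this inequality \emph{unconditionally}: one does not need the cycle class map on $\N^k$ to be well-defined or injective. It suffices to pick cycles $Z_i,Z'_j$ whose numerical classes form bases of $\N^k(X)_\bR$ and $\N^{n-k}(X)_\bR$, and observe that the matrix entries $f^*Z_i\cdot Z'_j$ are computed cohomologically as $f^*\cl(Z_i)\cup\cl(Z'_j)$, which are bounded by $\|f^*|_{H^{2k}(X)}\|_\iota$ up to a constant. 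No standard conjecture enters.

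For the hard direction, the gap is that an eigenvalue bound on $(f^{\Diamond m})^*|_{H^{2k}(X)}$ does \emph{not} control the norm $\|(f^{\Diamond m})^*|_{H^{2k}(X)}\|_\iota$, and it is the latter that defines $\chi_{2k}(f)_\iota$. Gelfand's formula says $\rho(T)=\lim_m\|T^m\|^{1/m}$ for a \emph{single} operator $T$; here the operators $(f^{\Diamond m})^*$ for varying $m$ are not iterates of one another, since dynamical composition is not functorial. So even a perfect spectral-radius bound for each $m$ separately yields nothing about the $\limsup$ of norms. This is precisely why the paper introduces the Norm Comparison conjecture (Conjecture~1.5) as an intermediary: one needs $\|f^*|_{H^{2k}(X)}\|_\iota\le C\,\|f^*|_{\N^k(X)_\bR}\|$ with $C$ independent of $f$, applied to each $f^{\Diamond m}$. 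Establishing this norm bound (Theorem~1.9(1)) uses $G_r$ \emph{and} $D^n(X\times X)$: the role of $D^n(X\times X)$ is to identify $\N^n(X\times X)_{\bQ_\ell}$ with a subspace of $H^{2n}(X\times X)$, so that the cycle-theoretic norm of $G_r\circ f$ --- controlled by $\deg(G_r\circ f)$ via $G_r$ --- dominates the cohomological operator norm. Your proposal places $D$ on the wrong side: it is essential for the hard direction, not the easy one.

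Your spectral-radius strategy \emph{does} succeed when $f$ is algebraically stable (so that $(f^{\Diamond m})^*=(f^*)^m$), and that is exactly the content of Theorem~1.7(2), proved from $G_r$ alone via trace bounds and the Lefschetz trace formula. For the general DDC conjecture, the missing ingredient is the norm comparison, and with it the need for $D^n(X\times X)$.
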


Note that if we assume that the above \cref{conj:DDC} holds on $X$ and the product variety $X\times X$, then the celebrated Weil's Riemann hypothesis (now Deligne's theorem \cite{Deligne74}) follows.
Moreover, the following {\it generalized} Weil's Riemann hypothesis (which could be viewed as a positive-characteristic analog of a famous result on compact K\"ahler manifolds due to Serre \cite{Serre60}) also follows from \cref{conj:DDC} (for $X$ and $X\times X$).
See \cref{lemma:DDC-Serre}.

\begin{conjecture}[Generalized Weil's Riemann Hypothesis]
\label{conj:Serre}
Let $X$ be a smooth projective variety of dimension $n$ over $\bk$, and $f$ a self-morphism of $X$.
Suppose that $f$ is polarized, i.e., $f^*H_X \sim_{\rat} qH_X$ for an ample divisor $H_X$ on $X$ and a positive integer $q\in \bN_{\ge 2}$.
Then for any $0\le i \le 2n$, the eigenvalues of $f^*|_{H^i(X)}$ are $q$-Weil numbers of weight $i$, i.e., algebraic numbers $\alpha$ such that $|\sigma(\alpha)| = q^{i/2}$ for every embedding $\sigma\colon \ol\bQ\injmap \bC$.
\end{conjecture}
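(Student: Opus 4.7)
My approach is to derive the conjecture by applying the DDC simultaneously to $X$ and to $X\times X$, which together force every archimedean absolute value of the eigenvalues of $f^*|_{H^i(X)}$ to equal exactly $q^{i/2}$. The decisive observation is that \cref{conj:DDC} as stated controls only \emph{even} cohomological degrees on a given variety, so passing to the product and using K\"unneth is what lets one reach arbitrary $i$.

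First, I would compute the numerical dynamical degrees explicitly. From $f^*H_X\sim_{\rat}qH_X$ it follows that $\bigl((f^m)^*H_X^k\cdot H_X^{n-k}\bigr)=q^{mk}(H_X^n)$, so by \Cref{rmk:lambda-chi}, $\lambda_k(f)=q^k$. The same calculation on $X\times X$, polarized by $L\coloneqq\pr_1^*H_X+\pr_2^*H_X$---on which $(f\times f)^*$ acts by the same factor $q$---yields $\lambda_k(f\times f)=q^k$. Invoking \cref{conj:DDC} on both varieties then gives
\[
\chi_{2k}(f)_\iota=\chi_{2k}(f\times f)_\iota=q^k\quad\text{for every }0\le k\le n.
\]

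Next, for the upper bound in every cohomological degree $i$, I would exploit the K\"unneth summand $H^i(X)\otimes H^i(X)\subset H^{2i}(X\times X)$, on which $(f\times f)^*$ acts as $f^*\otimes f^*$. If $\alpha$ is any eigenvalue of $f^*|_{H^i(X)}$, then $\alpha^2$ is an eigenvalue of $(f\times f)^*|_{H^{2i}(X\times X)}$, whence
\[
|\iota(\alpha)|^2=|\iota(\alpha^2)|\le\chi_{2i}(f\times f)_\iota=q^i,
\]
i.e.\ $|\iota(\alpha)|\le q^{i/2}$. For the matching lower bound, I would use $\deg(f)=q^n$ (from $(f^*H_X)^n=q^n(H_X^n)$), which gives $f_*\circ f^*=q^n\cdot\id$ on $H^\bullet(X)$ and in particular the invertibility of $f^*$ on every $H^i(X)$. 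Poincar\'e duality identifies $f_*|_{H^{2n-i}(X)}$ with the transpose of $f^*|_{H^i(X)}$, hence the two share eigenvalues; consequently, if $\alpha$ is an eigenvalue of $f^*|_{H^i(X)}$, then $q^n/\alpha$ is an eigenvalue of $f^*|_{H^{2n-i}(X)}$. Applying the upper bound in degree $2n-i$ then yields $|\iota(\alpha)|\ge q^n/q^{(2n-i)/2}=q^{i/2}$, and combining gives $|\iota(\alpha)|=q^{i/2}$ for every isomorphism $\iota$. Since $\iota$ is arbitrary and the eigenvalues of $f^*|_{H^i(X)}$ are algebraic (a standard consequence of the Lefschetz--Weil trace formula for polarized endomorphisms), this translates into the Weil condition $|\sigma(\alpha)|=q^{i/2}$ for every embedding $\sigma\colon\ol\bQ\injmap\bC$.

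The principal obstacle is more conceptual than technical: because \cref{conj:DDC} only controls spectral radii on even cohomology of a single variety, the proof essentially must pass through $X\times X$. Once this detour is accepted, the argument reduces to a clean interplay between K\"unneth on $X\times X$ and Poincar\'e duality on $X$, joined by the elementary identity $f_*f^*=\deg(f)\cdot\id$. One auxiliary point requiring care is the algebraicity (and $\ell$-independence) of the eigenvalues, which I would invoke as a standard fact about polarized endomorphisms rather than derive afresh.
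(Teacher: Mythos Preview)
Your argument is correct and matches the paper's proof (\cref{lemma:DDC-Serre}) almost exactly: both compute $\lambda_k=q^k$, pass to $X\times X$ and use the K\"unneth inclusion $H^i(X)^{\otimes 2}\subset H^{2i}(X\times X)$ together with DDC to get the upper bound $|\iota(\alpha)|\le q^{i/2}$, and then use Poincar\'e duality (your formulation via $f_*f^*=q^n\,\id$ is just the explicit version of the paper's one-line appeal to duality) for the matching lower bound. One cosmetic slip: in your displayed equation the range should read $0\le k\le 2n$ for the $X\times X$ statement, since you subsequently invoke $\chi_{2i}(f\times f)_\iota=q^i$ for all $i\le 2n$.
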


The above \cref{conj:Serre} follows from the standard conjectures (of Lefschetz type and Hodge type) mimicking Serre's argument \cite{Serre60}; see \cite[\S 4]{Kleiman68}.
Note, however, that in characteristic zero, not every variety admits a (non-isomorphic) polarized endomorphism (see \cite{NZ10,MZ18a}).
Another consequence of the standard conjectures is the following {\it generalized} semisimplicity for polarized endomorphisms acting on $\ell$-adic \'etale cohomology.

\begin{conjecture}[Generalized Semisimplicity]
\label{conj:SS}
Let $X$ and $f$ be as in \cref{conj:Serre}.
Then for any $0\le i \le 2n$, the pullback action $f^*|_{H^i(X)}$ of $f$ on $H^i(X)$ is diagonalizable over $\ol\bQ_\ell$.
\end{conjecture}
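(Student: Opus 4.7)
The plan is to deduce \cref{conj:SS} from two ingredients: \cref{conj:Serre} (the generalized Weil Riemann hypothesis), and the norm comparison (NC) statement announced in the abstract, which the authors derive from \cref{conj:Gr} together with the standard conjecture $D$. The guiding linear-algebra principle is this: if $A$ is an endomorphism of a finite-dimensional vector space over an algebraically closed field of characteristic zero, every eigenvalue of $A$ has modulus $\rho$ with respect to a chosen absolute value, and $\sup_m \|A^m\|/\rho^m < \infty$, then $A$ is diagonalizable; indeed any Jordan block with eigenvalue $\alpha$ of modulus $|\alpha|=\rho$ and size $r\ge 2$ forces $\|A^m\| \gtrsim m^{r-1}\rho^m$, contradicting the uniform bound.

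First, I would apply \cref{conj:Serre} to $f$ and $A \coloneqq f^*|_{H^i(X)}$: every eigenvalue of $A$ is a $q$-Weil number of weight $i$, hence $|\alpha|_\iota = q^{i/2}$ via the fixed isomorphism $\iota\colon \ol\bQ_\ell \isom \bC$. This fixes the candidate spectral radius $\rho = q^{i/2}$. Second, on the numerical side the polarization $f^*H_X \sim_{\rat} qH_X$ provides a uniform bound $\|(f^{\Diamond m})^*|_{\mathsf{N}^k(X)_\bR}\| \le C\, q^{km}$ independent of $m$: this is essentially the numerical analog of semisimplicity, accessible via the Hodge-index-type positivity of the numerical intersection pairing and known to hold in the cases covered by the applications in the paper. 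Third, I would invoke NC to transfer this uniform bound from numerical cycles to cohomology, yielding $\|(f^{\Diamond m})^*|_{H^{i}(X)}\|_\iota = O(q^{im/2})$ with an $m$-independent implicit constant; for even $i=2k$ this is the direct content of NC, while for odd $i$ one reduces to even cohomology on $X\times X$ by applying the same argument to $f\times f$ (which is again polarized with the same $q$) and extracting the appropriate K\"unneth factor, using that a tensor square is semisimple if and only if each factor is. Combined with Step~1, the linear-algebra principle then forces $A$ to be diagonalizable over $\ol\bQ_\ell$.

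The hardest step is clearly Step~3, the uniform norm comparison. Upgrading from DDC (equality of growth rates) to NC (control of operator norms up to an $m$-independent multiplicative constant) is strictly stronger, because growth rates are blind to Jordan blocks whereas uniform bounds are not. This is precisely where \cref{conj:Gr} — a quantitative strengthening of the standard conjecture $C$ — enters: it should furnish $m$-independent constants comparing the cohomological norm of an algebraic class to its numerical norm, while the standard conjecture $D$ is then used to represent arbitrary algebraic cohomology classes by numerical cycles. Since \cref{conj:Gr} is known only in restricted settings such as abelian varieties and Kummer surfaces, this line of reasoning establishes \cref{conj:SS} unconditionally only in those situations, matching the applications advertised in the abstract.
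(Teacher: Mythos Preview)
Your overall strategy is sound and reaches the right conclusion, but it is more circuitous than the paper's argument in two places.

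First, the norm comparison \cref{conj:NC} as formulated in the paper already contains an odd-degree inequality \eqref{eq:Dinh-odd}: $\|f^*|_{H^{2k+1}(X)}\|_\iota \le C\sqrt{\|f^*|_{\N^k}\|\,\|f^*|_{\N^{k+1}}\|}$. Applied to $f^m$ this gives $\|(f^*)^m|_{H^{2k+1}}\|_\iota \lesssim q^{m(2k+1)/2}$ directly, so there is no need to pass to $X\times X$ and extract a K\"unneth factor. Your detour would work, but it requires NC for $X\times X$, a strictly stronger hypothesis than NC for $X$; the paper's \cref{lemma:norm-SS} uses only the latter.

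Second, your Step~2 is overcomplicated. The bound $\|(f^m)^*|_{\N^k}\| \le C\,q^{km}$ needs no Hodge-index positivity and is not special to the cases in the applications: since $f^*H_X \sim_{\rat} qH_X$ one has $(f^m)^*H_X^k = q^{mk}H_X^k$, hence $\deg_k(f^m) = q^{mk}H_X^n$ exactly, and by \cref{rmk:norm-eff-corr} this equals $\|(f^m)^*|_{\N^k}\|_1$. All norms being equivalent, the bound is immediate and unconditional.

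Your explicit invocation of \cref{conj:Serre} in Step~1 is not wrong, and indeed clarifies why the Jordan-block argument succeeds (a bound $\|A^m\|\le C\rho^m$ alone does not exclude large Jordan blocks attached to eigenvalues of modulus strictly less than $\rho$). The paper's proof is terser here; the missing observation is that NC for $X$ already forces every eigenvalue of $f^*|_{H^i}$ to have modulus exactly $q^{i/2}$ via Poincar\'e duality and $f_*f^* = q^n\cdot\id$, so \cref{conj:Serre} need not be invoked separately.
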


We conclude this subsection by introducing the so-called norm comparison conjecture, which is naturally motivated from Complex Dynamics (in fact, various norm estimates are crucial tools in proving certain fundamental results in dynamics; see, e.g., \cite{DS05a,DS08,Dang20,Truong20,DF21}).
We shall see that \cref{conj:NC} implies all above Conjectures~\ref{conj:DDC}, \ref{conj:Serre}, and \ref{conj:SS}.

\begin{conjecture}[Norm Comparison]
\label{conj:NC}
Let $X$ be a smooth projective variety of dimension $n$ over $\bk$.
Then for any dynamical correspondence $f$ of $X$, there is a constant $C>0$, independent of $f$, such that we have for any $0\le k\le n$,
\begin{equation}
\label{eq:Dinh-even}
\big\|f^*|_{H^{2k}(X)}\big\|_{\iota} \le C \, \big\|f^*|_{\Nk}\big\|,
\end{equation}
and for any $0\le k\le n-1$,
\begin{equation}
\label{eq:Dinh-odd}
\big\|f^*|_{H^{2k+1}(X)}\big\|_{\iota} \le C \sqrt{\big\|f^*|_{\Nk}\big\|\big\|f^*|_{\N^{k+1}(X)_\bR}\big\|}.
\end{equation}
\end{conjecture}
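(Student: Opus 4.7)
The plan is to prove the Norm Comparison Conjecture conditionally on \cref{conj:Gr} applied to $X\times X$ together with the standard conjecture $D$, by reducing the operator norms on both sides to the K\"unneth components of the cohomology class $\cl(f)\in H^{2n}(X\times X)$ and then using the quantitative control furnished by $G_r$. To this end I would first fix a polarization $H_X$ of $X$ and equip $H^\bullet(X)$ and $\N^\bullet(X)_\bR$ with uniform norms built from hard Lefschetz together with pairings against powers of $H_X$. Via Poincar\'e duality and the K\"unneth formula $H^{2n}(X\times X)\simeq \bigoplus_r H^{2n-r}(X)\otimes H^r(X)$, the operator $f^{*}|_{H^r(X)}$ is, up to constants depending only on $(X,H_X)$, represented by the K\"unneth component $\cl(f)_r$ viewed as a self-map of $H^r(X)$. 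An analogous statement holds on the numerical side by the intersection-theoretic description of $\lambda_k(f)$ recalled in \cref{rmk:lambda-chi}.

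For the even case $r=2k$, I would apply \cref{conj:Gr} on $X\times X$ to produce an algebraic representative of the K\"unneth component $\cl(f)_{2k}$ whose cohomology norm is controlled by a uniform multiple of that of $\cl(f)_{2k}$ itself. The standard conjecture $D$ on $X\times X$ identifies homological and numerical equivalence, so this representative lives in $\N^n(X\times X)_\bR$ with controlled norm. Finally, the K\"unneth-type decomposition of $\N^n(X\times X)_\bR$ (which follows from $D$ plus cohomological K\"unneth) locates the relevant piece in $\N^k(X)_\bR\otimes \N^{n-k}(X)_\bR$, whose norm is in turn comparable to $\|f^*|_{\N^k(X)_\bR}\|$. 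Chaining the three estimates yields \eqref{eq:Dinh-even}.

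For the odd case $r=2k+1$, no algebraic cycle on $X\times X$ has class in an odd K\"unneth piece, so the above strategy breaks down directly. Instead I would use a Cauchy--Schwarz-type estimate: for $\alpha\in H^{2k+1}(X)$ and $\gamma\in H^{2n-2k-1}(X)$, the pairing
\[
\langle f^*\alpha,\gamma\rangle \;=\; \int_{X\times X}\cl(f)\cdot p_2^*\alpha\cdot p_1^*\gamma
\]
can be bounded, after wedging with auxiliary Lefschetz classes built from $H_X$ and applying Cauchy--Schwarz, by the geometric mean of two pairings of even bidegrees $(2k,2n-2k)$ and $(2k+2,2n-2k-2)$. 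Each factor is then an even K\"unneth piece, hence controlled by $\|f^*|_{\N^k(X)_\bR}\|$ and $\|f^*|_{\N^{k+1}(X)_\bR}\|$ respectively by the even case, producing \eqref{eq:Dinh-odd}.

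The main obstacle is precisely the odd-degree bound: extracting a geometric mean of algebraic norms essentially requires a substitute for the Hodge--Riemann bilinear relations, which are unavailable in positive characteristic, and this is exactly the role played by the quantitative positivity built into \cref{conj:Gr}. A secondary but essential difficulty is uniformity: the constants furnished by $G_r$ must depend only on $(X,H_X)$ and not on the class $\cl(f)$ being represented, so one must verify that $G_r$, applied on $X\times X$, yields a single constant $C$ independent of the correspondence $f$.
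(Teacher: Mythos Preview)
Your proposal has a genuine gap in how it deploys \cref{conj:Gr}, and it misses the paper's key mechanism entirely.

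First, you invoke $G_r$ on $X\times X$, but $\gamma_{X\times X,r}$ acts on $H^{2n}(X\times X)$ simply as multiplication by $r^{2n}$; it does not separate the K\"unneth pieces $H^{2n-i}(X)\otimes H^i(X)$ from one another. So composing with it cannot ``produce an algebraic representative of $\cl(f)_{2k}$ with controlled norm'' in any way that distinguishes $k$. The paper instead assumes Conjecture~$G_r(X)$ on $X$ and uses $G_r$ as a \emph{correspondence of $X$} composed with $f$: since $(G_r\circ f)^*|_{H^i(X)} = r^i\, f^*|_{H^i(X)}$, the cycle $G_r\circ f$ carries the full operator $f^*$ with the degree-$i$ piece weighted by $r^i$. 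The inequality in \cref{conj:Gr} together with Conjecture~$D^n(X\times X)$ (which lets one read $\|\cl_{X\times X}(G_r\circ f)\|_\iota$ as a norm on $\N^n(X\times X)_\bR$) then gives, uniformly in $r$,
\[
r^i\,\big\|f^*|_{H^i(X)}\big\|_\iota \ \lesssim\ \deg(G_r\circ f)\ \sim\ \max_{0\le j\le n} r^{2j}\deg_j(f).
\]

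Second, your separate treatment of the odd case via a Cauchy--Schwarz argument against Lefschetz classes is both unnecessary and, as you yourself flag, not actually carried out: you never explain how $G_r$ would supply the missing positivity. The paper handles even and odd degrees \emph{simultaneously} by a purely elementary lemma: once the displayed inequality above holds for all $r\in\bQ_{>0}$, one reduces to irreducible $f$, uses the Khovanskii--Teissier inequality to get log-concavity of $\{\deg_j(f)\}_j$, and then chooses $r^2=\deg_k(f)/\deg_{k+1}(f)$ so that the maximum on the right is attained at $j=k$ and $j=k+1$ with equal value $\deg_k(f)^{k+1}\deg_{k+1}(f)^{-k}$. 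Reading off $i=2k$ and $i=2k+1$ gives \eqref{eq:Dinh-even} and \eqref{eq:Dinh-odd} at once. This log-concavity trick is the missing key idea in your proposal.
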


In characteristic zero, \cref{conj:NC} (and hence \cref{conj:DDC}) is a well-known fact in Complex Dynamics.
It actually follows from the Hodge decomposition, the fact that on complex projective manifolds the first Chern class of an ample line bundle is a K\"ahler class, and a result due to Dinh (see \cite[Proposition~5.8]{Dinh05}).
Also, \cref{conj:Serre,conj:SS}, for polarized endomorphisms of compact K\"ahler manifolds, are exactly Serre's famous result \cite{Serre60}.
In short, the positivity notion in cohomology plays a crucial role.

\begin{remark}
\label{rmk:DSS}
Below is the state of the art for \cref{conj:DDC,conj:Serre,conj:SS} in positive characteristic.
\begin{enumerate}[(1)]
\item \cref{conj:Serre} is known in the following two cases:
\begin{enumerate}[(i)]
\item $X$ is a curve or an abelian variety due to Weil's pioneering work \cite{Weil48};
\item $X$ is defined over a finite field and $f$ is the Frobenius endomorphism, i.e., the celebrated Weil's Riemann hypothesis, solved by Deligne \cite{Deligne74}.
\end{enumerate}

\item The generalized semisimplicity \cref{conj:SS} (for polarized endomorphisms) is only known for curves and abelian varieties again by Weil's work \cite{Weil48}.
Based on this, Deligne's result \cite{Deligne72} yields the semisimplicity of the Frobenius endomorphism of $K3$ surfaces.
However, \cref{conj:Serre,conj:SS} are still widely open for $K3$ surfaces, considering the standard conjecture of Hodge type remains open for squares of $K3$ surfaces.

\item In regards to \cref{conj:DDC}, there are some progresses for the case when $f$ is a self-morphism of $X$.
The case when $f$ is an automorphism of a smooth projective surface is essentially solved in \cite{ES13}.
In any dimension, for an arbitrary self-morphism $f$, using the spreading out and specialization argument, it has been established in \cite{Truong} that
\begin{equation}
\label{eq:max}
\max_{i=0, \ldots, 2n} \chi_i(f)_{\iota} = \max_{k=0, \ldots,n} \lambda_k(f).
\end{equation}
See also \cite{Shuddhodan19} for an alternative approach towards this equality using dynamical zeta functions.
As a consequence, if $f$ is a self-morphism of a surface with $\lambda_1(f)\ge \lambda_2(f)$ (this includes the case when $f$ is an automorphism of a surface), then \cref{conj:DDC} holds.
When $f$ is a self-morphism of an abelian variety, \cref{conj:DDC} is recently solved in \cite{Hu19,Hu-lc} using the structure theorem of endomorphism algebras of abelian varieties.

For the case of dominant rational self-maps, less is known about \cref{conj:DDC} in positive characteristic.
If we assume the standard conjecture \hyperref[conj:D]{$D$}, then \cref{eq:max} is obtained in \cite{Truong} for all dynamical correspondences $f$.
\end{enumerate}
\end{remark}

\subsection{Main results}
\label{subsec:main-results}

The main contribution of this article is to provide an alternative dynamical approach to \cref{conj:Serre,conj:SS}.
Our new dynamical input is the so-called \cref{conj:Gr}, which is a quantitative strengthening of the standard conjecture \hyperref[conj:C]{$C$}.
We shall see in \cref{lemma:Gr-holds-AV,lemma:Gr-Kummer} that \cref{conj:Gr} holds on abelian varieties and Kummer surfaces.

We first explain the notation $G_r$.
Let $r\in \bQ_{>0}$ be a positive rational number.
Then there always exists a unique homological correspondence $\gamma_{X,r}$ of $X$, i.e.,
\[
\gamma_{X,r} \in H^{2n}(X\times X) = \bigoplus_{i=0}^{2n} H^i(X)\otimes H^{2n-i}(X) \isom \bigoplus_{i=0}^{2n} \End_{\bQ_\ell}(H^i(X)),
\]
so that the corresponding pullback action $\gamma_{X,r}^*$ on $H^i(X)$ is the multiplication-by-$r^i$ map for each $i$.
Note that $\gamma_{X,r}$ commutes with all homological correspondences of $X$.
We also denote it by $\gamma_r$ if there is no confusion.

\begin{conjGr}
\label{conj:Gr}
Let $X$ be a smooth projective variety of dimension $n$ over $\bk$.
Then for any $r\in \bQ_{>0}$, the above homological correspondence $\gamma_{r}$ of $X$ is algebraic and represented by a rational algebraic $n$-cycle $G_r$ on $X\times X$, i.e., $\gamma_r = \cl_{X\times X}(G_r)$.
Moreover, for any dynamical correspondence $f$ of $X$, there is a constant $C>0$, independent of $r$ and $f$, such that
\begin{equation}
\label{eq:Gr}
\|G_r\circ f\| \le C \deg(G_r\circ f),
\end{equation}
where $\norm{\cdot}$ denotes a norm on $\N^n(X\times X)_\bR$ and $\deg(\cdot)$ is the degree function on $X\times X$ with respect to a fixed ample divisor $H_{X\times X}$.
\end{conjGr}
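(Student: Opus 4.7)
The plan is to separate \cref{conj:Gr} into two independent assertions and attack each with a different tool. The first assertion, that $\gamma_r$ is represented by an algebraic cycle $G_r$, is equivalent to the algebraicity of the K\"unneth projectors: since $\gamma_r = \sum_{i=0}^{2n} r^i \pi_i$ as a class in $H^{2n}(X\times X)$, where $\pi_i$ is the $i$-th K\"unneth projector, algebraicity of $\gamma_r$ for every $r\in\bQ_{>0}$ follows formally from the standard conjecture \hyperref[conj:C]{$C(X)$}. Hence the first part is no stronger than $C(X)$ and reduces to classes of varieties where $C(X)$ is known (curves, surfaces, abelian varieties, $\ldots$). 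Crucially, this part only requires \emph{existence} of some algebraic representative, so in specific cases one has the freedom to choose a particularly convenient $G_r$.

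For the norm estimate \eqref{eq:Gr}, I would proceed case by case starting with the geometrically simplest situation. On an abelian variety $A$ the multiplication-by-$r$ isogeny $[r]\colon A\to A$ satisfies $[r]^* = r^i$ on $H^i(A,\bQ_\ell)$, so a (scalar multiple of the) graph $\Gamma_{[r]}$ provides an explicit algebraic representative of $\gamma_r$. The composition $\Gamma_{[r]}\circ f$ is then computed by the usual projection formula on $A\times A\times A$, and its numerical class can be directly controlled by the intersection numbers appearing in $\deg(\Gamma_{[r]}\circ f)$ because every component is effective in a uniform way. For Kummer surfaces $X=\mathrm{Km}(A)$, the plan is to pull back to $A$ via the quotient-resolution diagram, apply the abelian-variety estimate, and push forward, using that the pullback and pushforward along the (generically \'etale) covering map are bounded linear operations between the relevant numerical groups and commute with degree up to a fixed constant.

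The hardest part will be securing the \emph{uniformity} of the constant $C$ in both $r$ and $f$. The inequality $\|G_r\circ f\|\le C\deg(G_r\circ f)$ should be read as a positivity statement: although the weight-$i$ component of $G_r\circ f$ may scale as $r^i$ and although $f$ is arbitrary, a single intersection number against a fixed ample power $H_{X\times X}^{2n}$ has to simultaneously dominate the numerical norm of every weight piece. In characteristic zero such a uniform bound is delivered by the Hodge decomposition together with Dinh's norm inequality, where the K\"ahler class plays a central role; in positive characteristic this tool is unavailable, and it is precisely here that the geometric structure of $A$ (respectively of the cover $A\to\mathrm{Km}(A)$) has to substitute for positivity from Hodge theory. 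For a general smooth projective $X$, producing such a uniform positivity argument is the main conjectural content of \cref{conj:Gr}, and in my plan it remains the step whose execution is not yet visible beyond the two special classes noted above.
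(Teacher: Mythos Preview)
Your plan aligns with the paper's: \cref{conj:Gr} is a conjecture, not proved in general, and the paper verifies exactly the two special cases you sketch (abelian varieties, \cref{lemma:Gr-holds-AV}; Kummer surfaces, \cref{lemma:Gr-Kummer}), along the lines you indicate.

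Two points where the paper's execution is sharper than your sketch. For abelian varieties, there is no map $[r]$ when $r=n_1/n_2\in\bQ_{>0}$ is not an integer; instead one sets $G_r \coloneqq n_2^{-2n}\,\Gamma_{[n_1]}\circ\Gamma_{[n_2]}^{\sT}$, and the decisive step is Lieberman's \cref{lemma:Lieberman}, which gives $G_r\circ f = n_2^{-2n}\,([n_2]\times[n_1])_*(f)$. This is a proper pushforward of an effective cycle, hence itself effective, so with the norm \eqref{eq:corr-norm} one has $\|G_r\circ f\| = \deg(G_r\circ f)$ with $C=1$ on the nose --- the uniformity in $r$ and $f$ is automatic, not something that remains to be ``secured''. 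For Kummer surfaces the pullback-to-$A$ strategy is correct, but one must pass through the blowup $\tau\colon\widetilde A\to A$ with $\pi\colon\widetilde A\to S$ finite; the dynamical pullback $g=(\pi\times\pi)^{\dynpb}(f)$ differs from $(\tau\times\tau)^*(\tau\times\tau)_*(g)$ by an effective cycle supported in the exceptional locus (\cref{claim:e}), and bounding that error term against $\deg_1(f)$ uses \cref{lemma:boundedness} and \cref{lemma:deg-k-invariant-under-gen-finite}. This is more delicate than ``pullback and pushforward are bounded linear operations that commute with degree up to a fixed constant''.
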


\begin{remark}
\label{rmk:Gr}
\begin{enumerate}[(1)]
\item Note that the above constant $C$ does depend on the choices of the norm and the degree function, but is independent of $r$ and $f$.
Since all norms on $\N^n(X\times X)_\bR$ are equivalent, in practice, it suffices to adopt one specific norm in verifying \cref{conj:Gr}.

\item If the standard conjecture \hyperref[conj:C]{$C$} holds, then $\gamma_{r}$ can be represented by $G_r \coloneqq \sum_{i=0}^{2n} r^i\Delta_i$, where $\Delta_i \in \Z^n(X\times X)_\bQ$ corresponds the $i$-th K\"unneth component $\pi_i$ of the diagonal class $\cl_{X\times X}(\Delta_X)$.
Moreover, by the proof of \cite[Theorem~2A11]{Kleiman68} due to Lieberman, the standard conjecture \hyperref[conj:C]{$C$} is equivalent to the algebraicity of $\gamma_r$ for every (or rather, for $2n+1$ different) $r\in \bQ_{>0}$.
Over finite fields, the standard conjecture \hyperref[conj:C]{$C$} holds; further, the algebraic cycles $\Delta_i$ (representing the K\"unneth components $\pi_i$) and hence $G_r$ are independent of Weil cohomology theory (see \cite[Theorem~2]{KM74}).
In this case, \cref{conj:Gr} simply refers to the inequality \eqref{eq:Gr}, which is also independent of Weil cohomology theory.
\end{enumerate}
\end{remark}

We always choose $H_{X\times X} \coloneqq \pr_1^*H_X + \pr_2^*H_X$.
Then the degree $\deg(f)$ of a correspondence $f$ of $X$ is equal to
\[
f\cdot H_{X\times X}^n = \sum_{i=0}^n \binom{n}{i} f\cdot \pr_1^*H_X^{n-i} \cdot \pr_2^*H_X^{i} = \sum_{i=0}^n \binom{n}{i} f^*H_X^{i} \cdot H_X^{n-i}.
\]
In the summation, each $f^*H_X^{i} \cdot H_X^{n-i}$ is called the {\it $i$-th degree of $f$}, denoted by $\deg_i(f)$.
It is worth noting that for an effective correspondence $f$ of $X$, albeit the composite correspondence $G_r\circ f$ in \cref{conj:Gr} may not be effective anymore, we always have
\[
\deg(G_r \circ f) = \sum_{i=0}^n \binom{n}{i} (G_r\circ f)^*H_X^{i} \cdot H_X^{n-i} = \sum_{i=0}^{n} \binom{n}{i} r^{2i} \deg_i(f) \ge 0.
\]

Our first main result shows that \cref{conj:Gr} implies the generalized Weil’s Riemann hypothesis (i.e., \cref{conj:Serre}).

\begin{theorem}
\label{thm:B}
Suppose that Conjecture~\hyperref[conj:Gr]{$G_r(X)$} holds.
Then the following statements hold.
\begin{enumerate}[{\em (1)}]
\item \label{Assertion:B-1}
There is a constant $C>0$ such that for any dynamical correspondence $f$ of $X$, we have for any $0\le k\le n$,
\begin{equation}
\label{eq:tr-deg-even}
|\Tr(f^*|_{H^{2k}(X)})| \le C \deg_k(f),
\end{equation}
and for any $0\le k\le n-1$,
\begin{equation}
\label{eq:tr-deg-odd}
|\Tr(f^*|_{H^{2k+1}(X)})| \le C \sqrt{\deg_k(f)\deg_{k+1}(f)}.
\end{equation}

\item \label{Assertion:B-2}
Let $f$ be a dynamical correspondence of $X$ such that the $m$-th iterate $f^{\circ m}$ is still a dynamical correspondence for any $m\in \bN_{>0}$.
Then we have for any $0\le k\le n$,
\begin{equation}
\label{eq:rho-even}
\rho(f^*|_{H^{2k}(X)}) = \rho(f^*|_{\N^k(X)_\bR}),
\end{equation}
and for any $0\le k\le n-1$,
\begin{equation}
\label{eq:rho-odd}
\rho(f^*|_{H^{2k+1}(X)}) \le \sqrt{\rho(f^*|_{\N^k(X)_\bR}) \, \rho(f^*|_{\N^{k+1}(X)_\bR})},
\end{equation}
where $\rho(\varphi)$ denotes the spectral radius of a linear transformation $\varphi$.
In particular, \cref{conj:Serre} holds on $X$.
\end{enumerate}
\end{theorem}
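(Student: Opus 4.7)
The plan is to prove the trace estimates in assertion~(1) via the Lefschetz fixed-point formula combined with algebraic Künneth projectors coming from \cref{conj:Gr}, and then to deduce the spectral-radius statements in~(2) (and hence \cref{conj:Serre}) by applying~(1) to iterates.

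\emph{Step 1 (trace as intersection).} \cref{conj:Gr} applied at $2n+1$ distinct values of $r$, together with the Vandermonde argument of Lieberman (cf.~\cref{rmk:Gr}~(2)), produces algebraic cycles $\pi_i \in \N^n(X\times X)_\bR$ representing the Künneth projectors onto $H^i(X)$, with transpose $\pi_i^t = \pi_{2n-i}$. Since $\pi_i \circ G_r \circ f$ acts cohomologically as $r^i f^*$ on $H^i$ and as zero elsewhere, the Lefschetz trace formula gives $r^i \Tr(f^*|_{H^i(X)}) = (\pi_i \circ G_r \circ f) \cdot \Delta_X$, which by the standard identity $(\alpha \circ \beta) \cdot \Delta_X = \beta \cdot \alpha^t$ equals $(G_r \circ f) \cdot \pi_{2n-i}$. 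Continuity of the intersection pairing against the fixed class $\pi_{2n-i}$, followed by \cref{conj:Gr}, then yields
\[
r^i\,|\Tr(f^*|_{H^i(X)})| \le C_0\, \|G_r \circ f\| \le C_1 \deg(G_r \circ f) = C_1 \sum_{j=0}^n \binom{n}{j} r^{2j} \deg_j(f), \qquad r > 0.
\]

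\emph{Step 2 (optimization over $r$).} For $i = 2k$, dividing by $r^{2k}$ turns the right-hand side into a Laurent polynomial $\sum_j \binom{n}{j}\deg_j(f)\, r^{2(j-k)}$ in $r$ with nonnegative coefficients. By Khovanskii--Teissier log-concavity of the sequence $\{\deg_j(f)\}$ (standard on the support of $f$ with the nef restrictions of $\pr_1^*H_X$ and $\pr_2^*H_X$), each term at the optimal $r^*$ is bounded by a constant multiple of $\deg_k(f)$, producing \eqref{eq:tr-deg-even}. For $i = 2k+1$, the two dominant terms at the optimum are $\binom{n}{k}\deg_k(f)\, r^{-1}$ and $\binom{n}{k+1}\deg_{k+1}(f)\, r$, whose AM--GM minimum over $r > 0$ is of order $\sqrt{\deg_k(f)\deg_{k+1}(f)}$; log-concavity absorbs the remaining terms into the same order, giving \eqref{eq:tr-deg-odd}.

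\emph{Step 3 (spectral radii and Serre).} Applying \eqref{eq:tr-deg-even} to $f^{\circ m}$ (dynamical by hypothesis, with $(f^{\circ m})^*|_{H^{2k}(X)} = (f^*|_{H^{2k}(X)})^m$) and using continuity of $\deg_k(\cdot)$ as a linear functional on $\N^k(X)_\bR$ gives
\[
|\Tr((f^*|_{H^{2k}(X)})^m)| \le C\, \deg_k(f^{\circ m}) \le C'\, \|(f^*|_{\N^k(X)_\bR})^m\|.
\]
Taking $m$-th roots and limsup, and invoking the classical identity $\limsup_m |\Tr(A^m)|^{1/m} = \rho(A)$ (Weyl equidistribution of the arguments of the top-modulus eigenvalues), yields $\rho(f^*|_{H^{2k}(X)}) \le \rho(f^*|_{\N^k(X)_\bR})$. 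The reverse inequality is automatic: $\Z^k(X)/\!\sim_{\mathrm{hom}}$ embeds $f^*$-equivariantly into $H^{2k}(X)$ and surjects onto $\N^k(X)$. The odd-degree bound \eqref{eq:rho-odd} follows analogously. For \cref{conj:Serre}, a polarized endomorphism $f^*H_X \sim_{\rat} qH_X$ makes $H_X^k$ a $q^k$-eigenvector of $f^*$ on $\N^k(X)$, so $\rho(f^*|_{\N^k(X)_\bR}) = q^k$; \eqref{eq:rho-even} then caps every eigenvalue of $f^*|_{H^{2k}(X)}$ at modulus $q^k$, and Poincaré duality $\alpha \leftrightarrow q^n/\alpha$ between $H^{2k}(X)$ and $H^{2n-2k}(X)$ pins the modulus to exactly $q^k$.

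The principal technical obstacle lies in Step~2: a naive Vandermonde extraction only yields the weak bound $|\Tr(f^*|_{H^{2k}})| \le C \sum_j \deg_j(f)$, which is insufficient for the iteration argument in Step~3; to isolate $\deg_k(f)$ alone, one genuinely needs the positivity input of Khovanskii--Teissier log-concavity.
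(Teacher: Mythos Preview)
Your overall strategy matches the paper's: use the Lefschetz trace formula against algebraic K\"unneth projectors (extracted from $G_r$ via Vandermonde) to get
\[
r^{i}\,|\Tr(f^*|_{H^{i}(X)})| \ \lesssim\ \sum_{j=0}^n \binom{n}{j} r^{2j}\deg_j(f),
\]
optimize in $r$ via log-concavity of the degree sequence, and then iterate for the spectral radii. There is, however, one genuine gap in Step~2.

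Khovanskii--Teissier gives log-concavity of $\{\deg_j(f)\}_j$ only when $f$ is \emph{irreducible}; a sum of log-concave sequences need not be log-concave (e.g.\ $(4,2,1)+(1,2,4)=(5,4,5)$), so for a general dynamical correspondence $f=\sum_i a_i f_i$ there is in general no single choice of $r$ making $\sum_j\binom{n}{j}r^{2(j-k)}\deg_j(f)\lesssim\deg_k(f)$ with a uniform implied constant. The paper handles this by first proving the trace bounds for irreducible $f$ (\cref{lemma:Gr+log-concave}, where your Step~2 argument is valid, made precise by the abstract \cref{lemma:log-concave-sequence}), and then summing over components: linearity of $\Tr$ and of $\deg_k$ gives the even case \eqref{eq:tr-deg-even} immediately, while for the odd case \eqref{eq:tr-deg-odd} one uses the Cauchy--Schwarz superadditivity
\[
\sum_i\sqrt{\deg_k(a_if_i)\deg_{k+1}(a_if_i)}\ \le\ \sqrt{\deg_k(f)\deg_{k+1}(f)}.
\]
You should insert this reduction to irreducible components before invoking log-concavity.

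A smaller point in Step~3: the existence of the eigenvector $H_X^k$ only gives $\rho(f^*|_{\N^k(X)_\bR})\ge q^k$. Equality requires the upper bound as well, which comes from $\deg_k(f^m)=q^{mk}H_X^n$ together with $\deg_k(f^m)=\big\|(f^*)^m|_{\N^k(X)_\bR}\big\|_1$ for the effective $f^m$ (\cref{rmk:norm-eff-corr}) and the spectral-radius formula.
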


\begin{remark}
\label{rmk:B}
\begin{enumerate}[(1)]
\item It is worth mentioning that to solve \cref{conj:Serre} we only need \cref{conj:Gr} for (graphs of) polarized endomorphisms in \cref{thm:B}\eqref{Assertion:B-2}.
The program by Bombieri and Grothendieck is to use the standard conjectures of Lefschetz type and Hodge type to solve \cref{conj:Serre} (see \cite[\S 4]{Kleiman68}).
However, even for abelian varieties, the standard conjecture of Hodge type is currently known only up to dimension four \cite{Ancona21}, while our \cref{conj:Gr} holds in all dimensions (see \cref{lemma:Gr-holds-AV}).

\item Note that under the assumption of \cref{thm:B}, the standard conjecture \hyperref[conj:C]{$C$} holds by \cite[Proof of Theorem~2A11]{Kleiman68}.
Hence both $\Tr(f^*|_{H^{i}(X)})$ and $\rho(f^*|_{H^{i}(X)})$ are independent of the choice of $\iota \colon \ol\bQ_\ell \isom \bC$ by the Lefschetz trace formula (see \cref{prop:Lefschetz-trace-formula}), this is why we suppress the subscript $\iota$.
\end{enumerate}
\end{remark}

To deal with arbitrary dynamical correspondences, we prove \cref{conj:NC} assuming the above \cref{conj:Gr} and the standard conjecture \hyperref[conj:Dk]{$D$}.
The theorem below gives various logical relationships between Conjectures~\ref{conj:Gr} and \hyperref[conj:D]{$D$}, Conjectures~\ref{conj:DDC}, \ref{conj:Serre}, \ref{conj:SS}, and \ref{conj:NC}.

\begin{theorem}
\label{thm:C}
The following statements hold.
\begin{enumerate}[{\em (1)}]
\item \label{Assertion:C-1}
Conjectures~\hyperref[conj:Gr]{$G_r(X)$} and \hyperref[conj:Dk]{$D^n(X\times X)$} imply \cref{conj:NC}.

\item \label{Assertion:C-2}
\cref{conj:NC} implies \cref{conj:DDC}.

\item \label{Assertion:C-3}
\cref{conj:DDC} holds on $X$ and $X\times X$ implies that \cref{conj:Serre} holds on $X$.

\item \label{Assertion:C-4}
\cref{conj:NC} implies \cref{conj:SS}.
\end{enumerate}
\end{theorem}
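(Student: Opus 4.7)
For (1), the goal is to upgrade the trace bound of \cref{thm:B} to the operator-norm bound of \cref{conj:NC}. In the even case, the plan is to dominate $\|f^*|_{H^{2k}(X)}\|_\iota^2$ by a Hilbert--Schmidt-type trace of a correspondence composition. Using Poincar\'e duality, the bilinear pairing $\int_X \alpha \cup \beta$ on $H^{2k}(X) \otimes H^{2n-2k}(X)$ identifies the Poincar\'e-adjoint of $f^*|_{H^{2k}(X)}$ with $f_*|_{H^{2k}(X)} = (f^t)^*|_{H^{2k}(X)}$, where $f^t$ denotes the transpose correspondence. This gives $(f \circ f^t)^*|_{H^{2k}(X)} = (f^t)^* f^*$, and a standard operator-norm versus Hilbert--Schmidt estimate on $H^{2k}(X)\otimes_{\bQ_\ell,\iota}\bC$ yields
\[
\|f^*|_{H^{2k}(X)}\|_\iota^2 \;\le\; C_1\,\bigl|\Tr\bigl((f \circ f^t)^*|_{H^{2k}(X)}\bigr)\bigr|.
\]
Applying \cref{thm:B} to the dynamical correspondence $f \circ f^t$ bounds the right-hand side by $C_2\,\deg_k(f \circ f^t)$. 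Finally, the standard conjecture $D^n(X\times X)$ (homological $=$ numerical on $X\times X$ in top codimension) lets us translate the cohomological intersection $\deg_k(f \circ f^t)$ into a numerical quantity governed by $\|f^*|_{\N^k(X)_\bR}\|^2$ via direct expansion in pullbacks of $H_X$. Extracting square roots yields \eqref{eq:Dinh-even}. The odd-degree bound \eqref{eq:Dinh-odd} follows by the same scheme using the odd case of \cref{thm:B}, whose mixed factor $\sqrt{\deg_k(\cdot)\deg_{k+1}(\cdot)}$ naturally produces the square root in NC. The hard part here is the Hilbert--Schmidt step: one must pin down an inner product on $H^{2k}(X)\otimes_\iota\bC$ compatible with the Poincar\'e-transpose of correspondences so that the constant $C_1$ is uniform in $f$.

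For (2), apply \cref{conj:NC} to the iterate $f^{\Diamond m}$: extracting $m$-th roots of \eqref{eq:Dinh-even} and passing to $\limsup$ immediately yields $\chi_{2k}(f)_\iota \le \lambda_k(f)$ (and analogously in the odd case via $\sqrt{\lambda_k\lambda_{k+1}}\le\max\{\lambda_k,\lambda_{k+1}\}$). For the reverse inequality, \cref{rmk:lambda-chi} realizes $\lambda_k(f) = \lim_m\bigl((f^{\Diamond m})^*H_X^k\cdot H_X^{n-k}\bigr)^{1/m}$; this intersection number equals the cohomological pairing $\int_X(f^{\Diamond m})^*\cl(H_X^k)\cup\cl(H_X^{n-k})$ and is bounded by $\|(f^{\Diamond m})^*|_{H^{2k}(X)}\|_\iota$ up to fixed $\iota$-norms of $\cl(H_X^k)$ and $\cl(H_X^{n-k})$. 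The $m$-th root gives $\lambda_k(f) \le \chi_{2k}(f)_\iota$, proving \cref{conj:DDC}. Assertion (3) is exactly \cref{lemma:DDC-Serre}, invoked in the paragraph following \cref{conj:DDC}: it reduces the generalized Weil bounds for a polarized endomorphism to DDC applied simultaneously on $X$ and on $X\times X$, so we simply invoke it.

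For (4), we exploit that the constant in \cref{conj:NC} is uniform in $f$. By (2) and (3) (with \cref{conj:NC} on both $X$ and $X\times X$), \cref{conj:Serre} holds, so every eigenvalue of $f^*|_{H^i(X)}$ for a polarized endomorphism $f$ with $f^*H_X = qH_X$ has absolute value $q^{i/2}$. Suppose for contradiction that $f^*|_{H^i(X)}$ admits a Jordan block of size $s\ge 2$; then for large $m$, $\bigl\|(f^m)^*|_{H^i(X)}\bigr\|_\iota \ge c\,m^{s-1}q^{im/2}$. The crucial ingredient is a polynomial-factor-free bound $\bigl\|(f^m)^*|_{\N^k(X)_\bR}\bigr\| \le C'\,q^{km}$: this follows from the polarized identity $(f^m)^*H_X^k = q^{km}H_X^k$ together with preservation of the nef cone by $f^*$, where a Perron--Frobenius-type argument places $H_X^k$ as a distinguished positive eigenvector and forces the operator norm of $(f^m)^*$ on $\N^k(X)_\bR$ to grow at the purely exponential rate $q^{km}$. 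Plugging into \cref{conj:NC} applied to $f^m$ yields, in the even case $i=2k$, $\|(f^m)^*|_{H^{2k}(X)}\|_\iota \le CC'\,q^{km}$, contradicting the Jordan lower bound; in the odd case $i=2k+1$, the square-root form yields $\|(f^m)^*|_{H^{2k+1}(X)}\|_\iota \le CC'\,q^{(k+1/2)m} = CC'\,q^{im/2}$, again purely exponential. Hence no Jordan block of size $\ge 2$ can exist, proving \cref{conj:SS}. The second main obstacle is this polynomial-free growth bound, which leverages the polarized structure in an essential way via Perron--Frobenius inside the nef cone.
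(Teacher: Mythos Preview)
Your arguments for (2) and (3) are essentially correct and match the paper (these are \cref{lemma:norm-DDC} and \cref{lemma:DDC-Serre}). Your strategy for (4) is also right, though over-engineered: the paper avoids any Perron--Frobenius input by observing that since $f^m$ is an effective correspondence (it is the graph of a morphism), \cref{rmk:norm-eff-corr} gives directly $\|(f^m)^*|_{\N^k(X)_\bR}\|_1 = \deg_k(f^m) = q^{mk}H_X^n$, an exact equality with no polynomial factor. Your cone argument in higher codimension is delicate (nef cones in $\N^k$ for $k\ge 2$ need not have nonempty interior), so this shortcut is worth knowing.

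The real problem is (1). Your Hilbert--Schmidt scheme has two genuine gaps. First, the inequality $\|f^*|_{H^{2k}}\|_\iota^2 \le C_1\,|\Tr((f^{\sT})^*f^*|_{H^{2k}})|$ is not available: the Poincar\'e transpose $(f^{\sT})^* = f_*$ is an adjoint with respect to the \emph{bilinear} Poincar\'e pairing, not a Hermitian inner product. There is no positivity, so $\Tr(f_*f^*)$ can have arbitrary sign and need not dominate any operator norm. Asking for an inner product on $H^{2k}\otimes_\iota\bC$ for which the Poincar\'e transpose coincides with the Hermitian adjoint is essentially asking for a polarization of the cohomology compatible with all correspondences---precisely the kind of positivity that is missing in positive characteristic and that the standard conjectures are designed to supply. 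Second, even granting this, $f\circ f^{\sT}$ is a composition of correspondences, which the paper explicitly notes does \emph{not} preserve effectiveness; so $f\circ f^{\sT}$ is in general not a dynamical correspondence and \cref{thm:B} does not apply to it. Finally, $\deg_k(f\circ f^{\sT}) = f^*H_X^k\cdot f^*H_X^{n-k}$ is already a numerical quantity; Conjecture~$D^n(X\times X)$ plays no role in ``translating'' it, and it is not obviously controlled by $\|f^*|_{\N^k}\|^2$ (it mixes $f^*|_{\N^k}$ with $f^*|_{\N^{n-k}}$).

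The paper's route for (1) is quite different and uses $D^n(X\times X)$ in an essential but distinct way. One works with $g_r\coloneqq G_r\circ f$, so that $\|g_r^*|_{H^i}\|_\iota = r^i\|f^*|_{H^i}\|_\iota$. The K\"unneth decomposition $H^{2n}(X\times X)\cong\bigoplus_i\End_{\bQ_\ell}(H^i(X))$ shows that $\|g_r^*|_{H^i}\|_\iota$ is bounded by the norm of the cohomology class $\cl_{X\times X}(g_r)\in H^{2n}(X\times X)$. Conjecture~$D^n(X\times X)$ is used precisely here: it furnishes an embedding $\N^n(X\times X)_{\bQ_\ell}\hookrightarrow H^{2n}(X\times X)$, so that $\|\cl_{X\times X}(g_r)\|_\iota$ and the cycle-norm $\|g_r\|$ on $\N^n(X\times X)_\bR$ are comparable. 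Now \cref{conj:Gr} bounds $\|g_r\|$ by $\deg(G_r\circ f)\sim\max_j r^{2j}\deg_j(f)$, and varying $r$ together with the log-concavity of $\{\deg_j(f)\}_j$ (\cref{lemma:log-concave,lemma:log-concave-sequence}) isolates the desired $\deg_k(f)$ or $\sqrt{\deg_k(f)\deg_{k+1}(f)}$. So the passage from trace bounds to norm bounds is achieved not by squaring via $f\circ f^{\sT}$, but by comparing the correspondence itself, as a cycle on $X\times X$, to its cohomological avatar in $\bigoplus_i\End(H^i)$.
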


\begin{remark}
\label{rmk:C}
\begin{enumerate}[(1)]
\item As mentioned earlier, in characteristic zero, \cref{conj:NC} is a well-known fact in Complex Dynamics.
Thus the combination of \cref{conj:Gr} with the standard conjecture \hyperref[conj:D]{$D$} may replace the positivity notions in K\"ahler manifolds (such as positive closed smooth forms and currents) for our purposes, in particular, provide affirmative answers to \cref{conj:DDC,conj:Serre,conj:SS}.
We expect that \cref{conj:Gr} could later find more applications for analogs in positive characteristics of results currently only known in characteristic zero.

\item In summary, we have the following logical relationships:
\[
\begin{tikzcd}
\textrm{Conj.~\hyperref[conj:C]{$C$}} + \textrm{ineq.~\eqref{eq:Gr}} \arrow[d, equal] & \textrm{Standard Conjectures} \arrow[d, Rightarrow, "\textrm{\cite[\S 4]{Kleiman68}}"] \arrow[dr, Rightarrow] \arrow[dl, Rightarrow, dashed, "\textrm{\color{red} ?}"'] & \\
\textrm{Conj.~\ref{conj:Gr}} \arrow[r, Rightarrow, "\textrm{Thm.~\ref{thm:B}\eqref{Assertion:B-2}}"'] & \textrm{Conj.~\ref{conj:Serre} (GWRH)} & \textrm{Conj.~\ref{conj:SS} (GSS)} \\
 & \textrm{Conj.~\ref{conj:DDC} (DDC)} \arrow[u, Rightarrow, "\textrm{~Thm.~\ref{thm:C}\eqref{Assertion:C-3}}"'] & \\
\textrm{Conj.~\ref{conj:Gr}}+\textrm{\hyperref[conj:D]{$D$}} \arrow[uu, Rightarrow] \arrow[r, Rightarrow, "\textrm{Thm.~\ref{thm:C}\eqref{Assertion:C-1}}"] & \textrm{Conj.~\ref{conj:NC} (NC)} \arrow[u, Rightarrow, "\textrm{Thm.~\ref{thm:C}\eqref{Assertion:C-2}~}"] \arrow[uur, Rightarrow, "\textrm{Thm.~\ref{thm:C}\eqref{Assertion:C-4}}"'] & 
\end{tikzcd}
\]
(Memory aids: GWRH is for generalized Weil's Riemann hypothesis, GSS for generalized semisimplicity, DDC for dynamical degree comparison, and NC for norm comparison.)
One may wonder the relationship between the standard conjectures and our \cref{conj:Gr}.
We recently show that indeed for polarized endomorphisms, \cref{conj:Gr} is a consequence of the standard conjectures (see \cite{HT-pol}).
We suspect that this is true for all effective correspondences (namely, the standard conjectures imply \cref{conj:Gr}).
\end{enumerate}
\end{remark}

As an application, we deduce several consequences from the above \cref{thm:B,thm:C}.
Among other things, we show that DDC \cref{conj:DDC} holds for algebraically stable dynamical correspondences (see \cref{def:alg-stable}) of abelian varieties or Kummer surfaces\footnote{By definition, a Kummer surface is the minimal resolution of the quotient $A/\langle -1\rangle$ of an abelian surface $A$ over $\bk$ with $\Char(\bk)\neq 2$ by the sign involution.}, extending the main results in \cite{Hu19,Hu-lc}.
We also obtain the semisimplicity of polarized endomorphisms of Kummer surfaces over finite fields.
Note that \cref{conj:Serre} for abelian varieties is well known by Weil \cite{Weil48}; see \cref{rmk:DSS} for more details.
We thus recover this classical result using our new method.
%Moreover, with the aid of \cite{Clozel99}, we prove that DDC \cref{conj:DDC} holds on abelian varieties over finite fields for infinitely many primes $\ell\neq p$.

\begin{theorem}
\label{thm:A}
The following statements hold.
\begin{enumerate}[{\em (1)}]
\item \label{Assertion:A-1}
Let $f$ be an algebraically stable dynamical correspondence of an abelian variety or a Kummer surface $X$ over $\bk$.
Then we have for any $0\le k\le \dim X$,
\[
\chi_{2k}(f)_{\iota} = \lambda_k(f),
\]
and for any $0\le k\le \dim X - 1$,
\[
\chi_{2k+1}(f)_{\iota} \le \sqrt{\lambda_k(f)\lambda_{k+1}(f)}.
\]
In particular, \cref{conj:Serre} holds on abelian varieties and Kummer surfaces.

\item \label{Assertion:A-2}
\cref{conj:NC,conj:DDC,conj:SS} hold on Kummer surfaces over $\ol\bF_p$.

\item \label{Assertion:A-3} \cref{conj:NC,conj:DDC} hold on abelian varieties over $\ol\bF_p$ for infinitely many primes $\ell\neq p$.
\end{enumerate}
\end{theorem}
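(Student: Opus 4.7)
The plan is to combine \cref{thm:B,thm:C} with the already-established validity of \cref{conj:Gr} on abelian varieties (\cref{lemma:Gr-holds-AV}) and on Kummer surfaces (\cref{lemma:Gr-Kummer}).

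For assertion \eqref{Assertion:A-1}, the key observation is that algebraic stability of $f$ ensures that the pullback commutes with iteration, so that $(f^{\Diamond m})^{*} = (f^{*})^{m}$ holds on both $H^\bullet(X)$ and $\N^\bullet(X)_\bR$. Consequently the $\limsup$ in \eqref{eq:chi} and the limit in \eqref{eq:lambda} collapse to spectral radii,
\[
\chi_i(f)_{\iota} = \rho\bigl(f^*|_{H^i(X)}\bigr), \qquad \lambda_k(f) = \rho\bigl(f^*|_{\Nk}\bigr).
\]
Since \cref{conj:Gr} holds on both classes of varieties, \cref{thm:B}\eqref{Assertion:B-2} applies (note that the iterates $f^{\circ m}$ remain dynamical correspondences by algebraic stability) and yields the equality $\chi_{2k}(f)_{\iota}=\lambda_k(f)$ together with the inequality $\chi_{2k+1}(f)_{\iota}\le \sqrt{\lambda_k(f)\lambda_{k+1}(f)}$. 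A polarized endomorphism is a self-morphism, hence automatically algebraically stable, so \cref{conj:Serre} on abelian varieties and Kummer surfaces follows by specialization.

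For assertions \eqref{Assertion:A-2} and \eqref{Assertion:A-3}, the strategy is to strengthen the $G_r$ input by the standard conjecture $D^n(X\times X)$ and feed the pair into \cref{thm:C}: part \eqref{Assertion:C-1} then yields \cref{conj:NC}, part \eqref{Assertion:C-2} yields \cref{conj:DDC}, and (in the Kummer case) part \eqref{Assertion:C-4} yields \cref{conj:SS}. The remaining task is thus to secure $D^n(X\times X)$ in each case. For an abelian variety $A$ over $\overline{\bF_p}$, the product $A\times A$ is again abelian, and Clozel's theorem guarantees that numerical and $\ell$-adic homological equivalence coincide on $A\times A$ for infinitely many primes $\ell\ne p$, producing exactly $D^n(A\times A)$ for those $\ell$ and giving assertion \eqref{Assertion:A-3}. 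For a Kummer surface $X$ over $\overline{\bF_p}$, the $2$-to-$1$ rational correspondence between $A$ and $X$ transfers cycle-class information between $A\times A$ and $X\times X$; combined with the Tate conjecture for $K3$ surfaces over finite fields, and exploiting the vanishing of $H^1(X)$ and $H^3(X)$, one verifies $D^2(X\times X)$ for every $\ell\ne p$, yielding assertion \eqref{Assertion:A-2}.

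The main technical obstacle is the Kummer case. On the covering abelian fourfold $A\times A$, Clozel's theorem alone restricts us to infinitely many $\ell$; obtaining the $\ell$-uniform conclusion on $X\times X$ requires structural input beyond the covering, namely the vanishing of odd cohomology of $X$ (which drastically simplifies the K\"unneth decomposition of $X\times X$) together with the full Tate conjecture for $K3$ surfaces over finite fields. Threading these features through the Kummer correspondence to promote Clozel's partial statement to an $\ell$-uniform statement on $X\times X$ is the delicate point of the argument, but the resulting $D^2(X\times X)$ is exactly what is needed to plug into \cref{thm:C} and simultaneously conclude \cref{conj:NC,conj:DDC,conj:SS} on Kummer surfaces.
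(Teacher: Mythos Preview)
Your treatment of parts \eqref{Assertion:A-1} and \eqref{Assertion:A-3} is correct and coincides with the paper's: algebraic stability collapses the dynamical degrees to spectral radii so that \cref{thm:B}\eqref{Assertion:B-2} applies once \cref{conj:Gr} is known (\cref{lemma:Gr-holds-AV,lemma:Gr-Kummer}); and for \eqref{Assertion:A-3} Clozel's theorem supplies $D^n(A\times A)$ for a positive-density set of primes, after which \cref{thm:C}\eqref{Assertion:C-1},\eqref{Assertion:C-2} finishes.

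For part \eqref{Assertion:A-2} your overall scheme---feed $G_r(X)$ and $D^2(X\times X)$ into \cref{thm:C}---is right, but your justification of $D^2(X\times X)$ has a gap. The Tate conjecture for $K3$ surfaces, as stated and proved in the literature, is $T^1(X)$; it does not by itself yield $D^2(X\times X)$. Even with $H^1(X)=H^3(X)=0$, the K\"unneth piece $H^2(X)\otimes H^2(X)\subset H^4(X\times X)$ may contain Tate classes coming from pairs of transcendental Frobenius eigenvalues $\alpha_i,\alpha_j$ with $\alpha_i\alpha_j=q^2$ but $\alpha_i,\alpha_j\neq q$, and $T^1(X)$ says nothing about the algebraicity of such ``transcendental correspondences''. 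So neither the odd-cohomology vanishing nor $T^1(X)$ closes the argument, and your proposed promotion of Clozel's partial statement to an $\ell$-uniform one does not go through on these grounds.

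The paper's route (\cref{lemma:D-Kummer2}) bypasses this by working on the abelian side: Zarhin proved the Tate conjecture $T^2(A\times A)$ for squares of abelian surfaces over finite fields, and together with Weil's semisimplicity of Frobenius on abelian varieties this yields, via Milne, $D^2(A\times A)$ for \emph{every} $\ell\neq p$ (not merely a Clozel set). One then passes through the blowup $\widetilde A\to A$ and the finite cover $\pi\colon\widetilde A\to S$: an orthogonal K\"unneth decomposition of $H^4(\widetilde A\times\widetilde A)$ into the $A\times A$-part and pieces generated by exceptional curves gives $D^2(\widetilde A\times\widetilde A)$, and pushing forward along $\pi\times\pi$ gives $D^2(S\times S)$. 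The decisive arithmetic input is thus Zarhin's $T^2(A\times A)$, not the Tate conjecture for $K3$ surfaces.
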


Our last result concerns a dynamical perspective of finite correspondences, which are the building block in constructions of various triangulated categories of (effective) motives over $\bk$ by Suslin and Voevodsky (see \cite{MVW06}).
Typical examples of them include graphs of (not necessarily surjective) morphisms, transposes of graphs of finite surjective morphisms, and positive combinations of such; see \Cref{subsec:finite-corr} for details.
In particular, it also extends the main results of \cite{Hu19,Hu-lc}.

\begin{theorem}
\label{thm:D}
Let $X$ be an abelian variety of dimension $n$ over $\bk$, and $f$ an effective finite correspondence of $X$ (see \cref{def:finite-corr}).
Then we have for any $0\le k\le n$,
\begin{equation*}
%\label{eq:A-even}
\rho(f^*|_{H^{2k}(X)}) = \rho(f^*|_{\N^k(X)_\bR}),
\end{equation*}
and for any $0\le k\le n - 1$,
\begin{equation*}
%\label{eq:A-odd}
\rho(f^*|_{H^{2k+1}(X)}) \le \sqrt{\rho(f^*|_{\N^k(X)_\bR}) \, \rho(f^*|_{\N^{k+1}(X)_\bR})}.
\end{equation*}
\end{theorem}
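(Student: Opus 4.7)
The plan is to reduce \cref{thm:D} to \cref{thm:B}\eqref{Assertion:B-2}, whose hypothesis (namely, \cref{conj:Gr}) holds on abelian varieties by \cref{lemma:Gr-holds-AV}. The difficulty is that an effective finite correspondence $f$ of $X$ need not be a dynamical correspondence: on an irreducible component $Z$ of $\Supp(f)$, the first projection is finite and hence surjective, but the second projection $\pr_2\colon Z\to X$ may only hit a proper subvariety. I therefore intend to produce a dynamical surrogate $\tilde f$ of $f$ whose pullback actions on $H^{\bullet}(X,\bQ_\ell)$ and $\N^{\bullet}(X)_\bR$ agree with those of $f$ up to a common positive scalar.

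The surrogate will be built by translation-averaging in the second factor. Pick a finite subset $S\subset X(\bk)$ large enough that $\bigcup_{b\in S}(\pr_2(Z)+b)=X$ for every component $Z$ of $\Supp(f)$, and set
\[
\tilde f \coloneqq \sum_{b\in S}(\id_X\times t_b)_*(f),
\]
where $t_b\colon X\to X$ denotes translation by $b$. Each summand is again an effective finite correspondence (translation in the second coordinate preserves finiteness of $\pr_1$), and by the choice of $S$ the total support of $\tilde f$ surjects onto $X$ under $\pr_2$, so $\tilde f$ is a dynamical correspondence. To check the iterability hypothesis of \cref{thm:B}\eqref{Assertion:B-2}, I will verify that the cycle composition of dynamical effective finite correspondences is again of the same type: the intersection on $X\times X\times X$ is proper thanks to the finiteness of $\pr_1$, the pushforward preserves effectiveness, and surjectivity of both outer projections is inherited. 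Inductively, every iterate $\tilde f^{\circ m}$ is then a dynamical effective finite correspondence.

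The key cohomological input is that translations act trivially on $H^{\bullet}(X,\bQ_\ell)$ and on $\N^{\bullet}(X)_\bR$ for an abelian variety $X$; on cohomology this is classical (via the Hopf algebra structure on $H^{\bullet}(X)$), while on numerical classes translates of a cycle are algebraically equivalent. A direct projection-formula computation then gives
\[
\bigl((\id_X\times t_b)_*[Z]\bigr)^{*} = [Z]^{*}\circ t_b^{*} = [Z]^{*}
\]
on both spaces, so $\tilde f^{*}=|S|\cdot f^{*}$. Applying \cref{thm:B}\eqref{Assertion:B-2} to $\tilde f$ and dividing the resulting spectral-radius comparisons through by $|S|$, using that $\rho(\tilde f^{*}|_V)=|S|\,\rho(f^{*}|_V)$ for any $f^{*}$-invariant subspace $V$, will then yield the claimed bounds for $f$.

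The step I expect to require the most care is verifying that cycle composition on $X\times X$ jointly preserves the dynamical, effective, and finite properties, so that the entire sequence $\{\tilde f^{\circ m}\}_{m\ge 1}$ fits the hypotheses of \cref{thm:B}\eqref{Assertion:B-2}. Once this stability and the triviality of translations on $H^{\bullet}(X,\bQ_\ell)$ and $\N^{\bullet}(X)_\bR$ are in place, the reduction is essentially formal.
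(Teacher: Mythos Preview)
Your surrogate construction does not achieve what you claim. The definition of a dynamical correspondence (\cref{def:dyn-corr}) requires \emph{each irreducible component} of $\tilde f$ to surject onto $X$ under both projections, not merely the union of the components. Applying $(\id_X\times t_b)_*$ to a component $Z$ of $f$ produces a component whose image under $\pr_2$ is $t_b(\pr_2(Z))$; if $\dim\pr_2(Z)<n$ then this translate still has dimension $<n$. Summing over a finite set $S$ may make the union of the $\pr_2$-images cover $X$, but every individual component of $\tilde f$ retains the same defect, so $\tilde f$ is not dynamical and you cannot invoke \cref{thm:B}\eqref{Assertion:B-2} as stated. The translation-averaging idea, while the computation $\tilde f^{*}=|S|\,f^{*}$ is correct, does not repair the failure of dominance.

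The paper's route is both simpler and avoids the issue entirely. The point you overlooked is already contained in \cref{lemma:Gr-holds-AV}: on an abelian variety the explicit cycle $G_r=n_2^{-2n}\,\Gamma_{[n_1]}\circ\Gamma_{[n_2]}^{\sT}$ satisfies $G_r\circ f=n_2^{-2n}\,([n_2]\times[n_1])_*(f)$, which is effective for \emph{every} effective correspondence $f$, dynamical or not. Hence the inequality \eqref{eq:Gr} holds (with constant $1$) for all effective $f$, and the proof of \cref{thm:B} goes through verbatim once one replaces ``dynamical correspondence'' by ``effective correspondence whose iterates $f^{\circ m}$ remain effective''. For effective finite correspondences this last condition is automatic by \cref{rmk:finite-corr-comp}: the intersection defining $f^{\circ m}$ is proper (finiteness of $\pr_1$ on each component), so $f^{\circ m}$ is a genuine effective finite cycle for all $m$. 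No surrogate is needed; apply the argument of \cref{thm:B}\eqref{Assertion:B-2} directly to $f$.
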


\subsection{Outline}

In \Cref{section:prelim}, we will provide preliminaries on correspondences, including the classical way to compose them, how they act on cycle groups and cohomology, and various numerical invariants associated to them.
Besides, we recall the standard conjectures \hyperref[conj:C]{$C$} and \hyperref[conj:D]{$D$} in \Cref{subsec:SCCD} and Lieberman's \cref{lemma:Lieberman} in \Cref{subsec:operation}.
We also present a useful \cref{lemma:boundedness} on the boundedness of self-intersections of correspondences in \Cref{subsec:boundedness}, which might be of its independent interest.
In \Cref{subsec:dyn-corr}, we discuss dynamical correspondences, their compositions, and dynamical pullback, which may be less familiar than their counterparts in intersection theory.
In \Cref{subsec:finite-corr}, we will explain the basic properties of effective finite correspondences.
The section thereafter is devoted to proofs of main \cref{thm:A,thm:B,thm:C,thm:D}.

\subsection{Acknowledgments}

We would like to heartily thank Bruno Kahn and Paul Arne \O stv\ae r for referring us to the notion of finite correspondence, which helps to strengthen some results.
We would also like to thank Giuseppe Ancona, Nguyen-Bac Dang, Charles Favre, Claire Voisin, Duc-Viet Vu, Yuri Zarhin, and De-Qi Zhang for inspiring comments and discussions.

\section{Preliminaries on correspondences}
\label{section:prelim}

Let $\bk$ be an algebraically closed field of arbitrary characteristic.
Let $X$ be a smooth projective variety of dimension $n$ over $\bk$.
Choose $\ell$-adic \'etale cohomology as our Weil cohomology theory, i.e., $H^{\bullet}(X) \coloneqq H_{\et}^{\bullet}(X,\bQ_\ell)$ with $\ell\neq \Char(\bk)$.
In particular, we have a cup product $\cup$, Poincar\'e duality, the K\"unneth formula, the cycle class map $\cl_X$, the weak Lefschetz theorem, and the hard Lefschetz theorem.
We refer to Fulton's book \cite{Fulton98} for intersection theory.

\subsection{Algebraic cycles}
\label{subsec:cycle}

Let $\Z^k(X)$ be the abelian group generated by integral algebraic cycles of codimension $k$ on $X$.
Let $\N^k(X)$ denote the quotient group of $\Z^k(X)$ by numerical equivalence $\num$, which turns out to be a finitely generated abelian group (see \cite[Theorem~3.5]{Kleiman68}).
For a field $\bF$ of characteristic zero (e.g., $\bQ$, $\bQ_\ell$, $\bR$, or $\bC$), set $\N^k(X)_\bF \coloneqq \N^k(X) \otimes_\bZ \bF$.

Fix an ample divisor $H_X$ on $X$.
For an algebraic cycle $Z$ of codimension $k$ on $X$, we call $Z\cdot H_X^{n-k}$ the {\it degree} of $Z$, denoted by $\deg_{H_X}(Z)$ (or simply, $\deg(Z)$ as the ample divisor is clear from the context).
By definition, the degree $\deg(Z)$ of $Z$ is a numerical invariant of $Z$, i.e., depending only the numerical class $[Z]\in \Nk$ of $Z$.

A numerical class $\alpha \in \Nk$ is {\it effective}, it $\alpha = [Z]$ for some effective cycle $Z\in \Z^k(X)_\bR$.
Note that by the Nakai--Moishezon criterion, the degree of an effective cycle (class) is always nonnegative.
The {\it effective cone} $\Eff^k(X)$ of $X$ denotes the convex cone in $\Nk$ generated by all numerical classes of effective cycles.
The closure of $\Eff^k(X)$ in $\Nk$ is called the {\it pseudoeffective cone} $\PsEff^k(X)$ of $X$.

Note that $\Nk$ is a finite-dimensional real vector space.
We have several ways to endow it with norms, equivalent though.
For instance, given any $\alpha \in \Nk$, we define
\begin{equation}
\label{eq:cycle-norm}
\|\alpha\| \coloneqq \inf \big\{\deg(\alpha^+ + \alpha^-) : \alpha \equiv \alpha^+ - \alpha^-, \alpha^\pm \in \Eff^k(X) \big\}.
\end{equation}
This is indeed a norm on $\Nk$ (see, e.g., \cite[\S2.4]{Truong20}).

\begin{remark}
\label{rmk:norm-effective}
We note that if $\alpha \in \Eff^k(X)$, then $\|\alpha\| = \deg(\alpha)$.
First, $\|\alpha\| \le \deg(\alpha)$ is clear. If we write $\alpha \equiv \alpha^+ - \alpha^-$ for some $\alpha^\pm \in \Eff^k(X)$, then $\deg(\alpha^+ + \alpha^-) \ge \deg(\alpha)$.
Taking infimum, we obtain the converse.
\end{remark}

On the other hand, because of the duality between $\Nk$ and $\N^{n-k}(X)_\bR$, we can speak of the operator norm on $\Nk$ as linear operators on $\N^{n-k}(X)_\bR$: for any $\alpha \in \Nk$,
\begin{equation}
\label{eq:cycle-dual-norm}
\|\alpha\|' \coloneqq \sup \big\{|\alpha \cdot \beta| : \beta \in \N^{n-k}(X)_\bR, \|\beta\| = 1 \big\}.
\end{equation}

\subsection{Standard Conjectures \texorpdfstring{$C$}{C} and \texorpdfstring{$D$}{D}}
\label{subsec:SCCD}

For the readers' convenience, we recall the standard conjectures \hyperref[conj:C]{$C$} and \hyperref[conj:D]{$D$}.
We refer the reader to Kleiman's survey \cite{Kleiman68} for a comprehensive exposition on all standard conjectures and to the book \cite{MNP13} for a modern account.

Let $\Delta_X \subset X \times X$ be the diagonal.
Then by the K\"unneth formula
\[
H^{2n}(X\times X) = \bigoplus_{i=0}^{2n} H^{i}(X) \otimes H^{2n-i}(X),
\]
we have the following decomposition of the diagonal class:
\[
\cl_{X\times X}(\Delta_X) = \sum_{i=0}^{2n} \pi_{i},
\]
where $\pi_{i} \in H^{i}(X) \otimes H^{2n-i}(X)$ for each $i$.
Note that the pullback $\pi_i^*$ of the K\"unneth component $\pi_i$ is exactly the $i$-th projection operator $\pi_i \colon H^{\bullet}(X) \to H^i(X)$, hence the name.\footnote{However, in \cite{Kleiman68,MNP13}, they are all using the pushforward of the K\"unneth components to give the projections, which is merely a conventional discrepancy.}
Below is the standard conjecture of K\"unneth type.

\begin{conjC}%[Conjecture of K\"unneth type]
\label{conj:C}
The K\"unneth components $\pi_{i}$ of the diagonal class $\cl_{X\times X}(\Delta_X)$ are algebraic. Namely, there are rational algebraic cycles $\Delta_{i} \in \Z^n(X\times X)_\bQ$ such that $\pi_{i} = \cl_{X\times X}(\Delta_{i})$.
\end{conjC}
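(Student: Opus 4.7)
The statement is the classical standard conjecture $C$ of Künneth type, which is a famous open problem. I will outline the standard approaches rather than claim a genuine proof, since the general case lies well beyond current techniques.

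The plan would be to exhibit explicit algebraic projectors onto each $H^i(X)$. Via the Künneth formula and Poincaré duality, the component $\pi_i \in H^i(X) \otimes H^{2n-i}(X)$ corresponds under the canonical identification to the identity endomorphism of $H^i(X)$, so the task is to realize this identity as a correspondence coming from an algebraic cycle. First, I would invoke the hard Lefschetz theorem: for a fixed ample class $h = c_1(H_X)$, cup product with $h^{n-i}$ induces an isomorphism $L^{n-i} \colon H^i(X) \xrightarrow{\sim} H^{2n-i}(X)$ for $i \le n$. Its inverse operator $\Lambda$ is conjecturally algebraic (this is the standard conjecture $B$ of Lefschetz type). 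Granted $B(X)$, Lieberman's argument (reproduced in Kleiman's survey) constructs projectors onto the primitive parts $P^i(X) = \ker\!\bigl(L^{n-i+1}\colon H^i(X) \to H^{2n-i+2}(X)\bigr)$ as polynomials in $L$ and $\Lambda$, and then assembles them via the Lefschetz decomposition $H^i(X) = \bigoplus_k L^k P^{i-2k}(X)$ to produce each $\pi_i$ as an algebraic cycle class. So the first plan reduces $C(X)$ to $B(X)$.

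An alternative route, more in the spirit of this paper, goes through the homological correspondences $\gamma_{X,r}$. As the authors note, a Vandermonde-type argument shows that $C(X)$ is equivalent to the algebraicity of $\gamma_{X,r}$ for $2n+1$ distinct values of $r \in \bQ_{>0}$. Thus one could try to produce such cycles directly whenever the geometry of $X$ is rich enough (a group law, natural self-correspondences, a controlled Chow ring), e.g.\ by pushing forward cycles from moduli of torsors, or intersecting the diagonal with line bundles pulled back from $X \times X$.

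The main obstacle is the complete absence of a substitute for Hodge theory in positive characteristic: without a Kähler form or $(p,q)$-decomposition, there is no direct way to split $H^{\bullet}(X)$ canonically by algebraic means. Conjecture $C$ is known for curves (trivially), for surfaces, for abelian varieties (Lieberman), for some threefolds, and for all smooth projective varieties over finite fields (Katz--Messing, as a consequence of Deligne's proof of the Weil conjectures), but the general case remains a central open problem. In the present paper the conjecture is invoked only as a background hypothesis, not derived.
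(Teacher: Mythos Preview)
Your assessment is correct: the statement in question is not a theorem but the classical standard conjecture $C$ of K\"unneth type, and the paper does not prove it. It is stated in a conjecture environment and followed only by \cref{rmk:ABC}, which records the known cases (curves, surfaces, abelian varieties via Lieberman, and varieties over finite fields via Katz--Messing as a consequence of Deligne's work). Your outline of the standard reductions---$B(X)\Rightarrow C(X)$ via Lieberman's construction of projectors from $L$ and $\Lambda$, and the equivalence with algebraicity of the $\gamma_{X,r}$ for $2n+1$ distinct $r$---matches exactly what the paper invokes as background (see \cref{rmk:Gr}(2) and the references to \cite[Theorem~2A11]{Kleiman68}). There is nothing to compare here: the paper treats \cref{conj:C} purely as a hypothesis, and you have correctly identified both this and the state of the art.
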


\begin{remark}
\label{rmk:ABC}
\begin{enumerate}[(1)]
\item \cref{conj:C} holds if $X$ is a curve, a surface (see \cite[Corollary~2A10]{Kleiman68}), or an abelian variety by Lieberman (see, e.g., \cite[Theorem~2A11]{Kleiman68}). Over $\bC$, in general, \cref{conj:C} follows from the famous Hodge conjecture.

\item If $X$ is defined over a finite field $\bF_q$, then \cref{conj:C} holds as a consequence of Weil's Riemann hypothesis (see \cite[Theorem~2]{KM74}).
\end{enumerate}
\end{remark}

\begin{conjDk}
\label{conj:Dk}
Homological equivalence $\sim_{\hom}$ of algebraic cycles of codimension $k$ on $X$ coincides with numerical equivalence $\num$.
Precisely, for any rational algebraic cycle $Z\in \Z^k(X)_\bQ$, if $Z \num 0$ in $\N^k(X)_\bQ$, then $\cl_X(Z) = 0$ in $H^{2k}(X)$.
\end{conjDk}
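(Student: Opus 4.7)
The inclusion $\sim_{\hom}\subseteq \num$ is automatic and requires no conjecture: if $\cl_X(Z)=0$, then for every algebraic $W$ of complementary codimension, $\cl_X(Z\cdot W)=\cl_X(Z)\cup\cl_X(W)=0$ in $H^{2n}(X)$, and since the trace map on $H^{2n}(X)$ agrees with the degree on $0$-cycles, $\deg(Z\cdot W)=0$. The content of \cref{conj:Dk} is therefore the reverse inclusion, equivalently the injectivity of the cycle class map $\cl_X\colon \N^k(X)_\bQ\to H^{2k}(X)$. This is Grothendieck's Standard Conjecture $D$, notoriously open in general, so any plan must ultimately be a reduction to the other standard conjectures or to input from a neighboring framework (Hodge theory in characteristic zero, Tate's conjecture over finite fields).

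My approach would follow the classical Lefschetz--Grothendieck program. Fix $Z\in \Z^k(X)_\bQ$ with $Z\num 0$; to show $\cl_X(Z)=0$, one must extend the hypothesis (pairing trivially against $\cl_X(\N^{n-k}(X)_\bQ)$) to pairing trivially against all of $H^{2n-2k}(X)$, then invoke nondegeneracy of Poincar\'e duality. For $2k\le n$ (the other range being symmetric), the Hard Lefschetz isomorphism $L^{n-2k}\colon H^{2k}(X)\isom H^{2n-2k}(X)$ transports that pairing to a bilinear form on $H^{2k}(X)$. If the inverse Lefschetz operator $\Lambda$ is algebraic (Standard Conjecture $B(X)$), then $L^{n-2k}$ preserves algebraic subspaces; combined with the K\"unneth projectors supplied by \hyperref[conj:C]{Conjecture $C(X)$}, this reduces \cref{conj:Dk} to the nondegeneracy of the intersection form restricted to the algebraic subspace $\cl_X(\N^k(X)_\bQ)$, which in turn is the content of the Hodge-type Standard Conjecture.

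The hard part is that $B(X)$ and the Hodge-type conjecture are themselves wide open outside a short list (curves, surfaces, abelian varieties, and, in positive characteristic, up to dimension four for abelian varieties by Ancona). In the cases actually needed for the applications in \cref{thm:A,thm:D}, I would specialize rather than attempt the general case: on an abelian variety $A$, Lieberman's Fourier--Mukai/Pontryagin construction yields explicit algebraic cycles representing all the required $\Lambda$ and K\"unneth projectors and, combined with the known Hodge-type results, gives $D^k(A)$ in every codimension; on a Kummer surface, descent through the quotient $A/\langle -1\rangle$ followed by blow-up of the sixteen $A_1$ singularities transports $D^k$ from $A$ via equivariance of the cycle class map, while on products such as $X\times X$ one exploits multiplicativity of the standard conjectures. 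This route would establish \cref{conj:Dk} in precisely the instances the rest of the paper invokes, while leaving the statement conjectural in full generality.
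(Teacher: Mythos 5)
The statement you were asked about is a \emph{conjecture}: it is Grothendieck's standard conjecture $D$ in codimension $k$, and the paper neither proves it nor claims to --- it is recalled in \Cref{subsec:SCCD} purely as a hypothesis, with the known cases collected in \cref{rmk:D}. Your opening observations are correct: the inclusion $\sim_{\hom}\,\subseteq\,\num$ is automatic, the actual content is the injectivity of $\cl_X$ on $\N^k(X)_\bQ$, and the classical reduction of $D$ to the Lefschetz-type conjecture $B$ plus the Hodge-type standard conjecture is the standard (and still conditional) route.

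However, your proposed specialization to the cases the paper actually needs does not go through as written. For abelian varieties in positive characteristic the Hodge-type standard conjecture is known only up to dimension four (Ancona), a point the paper itself emphasizes in \cref{rmk:B}; since what is needed is $D^n(X\times X)$ for an abelian variety $X$ of arbitrary dimension, the route ``Lieberman plus known Hodge-type results'' is unavailable there. The paper instead invokes Clozel's theorem, which establishes $D$ for $\ell$-adic cohomology only for a positive-density set of primes $\ell$ --- which is exactly why \cref{thm:A}(3) is stated ``for infinitely many primes $\ell\neq p$'' rather than unconditionally. Likewise, for Kummer surfaces what is required is $D^2(S\times S)$, i.e.\ codimension two on the fourfold $S\times S$, not $D^k(S)$ itself; the paper's \cref{lemma:D-Kummer2} obtains this not by descending the standard conjectures through the quotient, but by combining Zarhin's proof of the Tate conjecture for squares of abelian surfaces over finite fields with Weil's semisimplicity of Frobenius and Milne's criterion (Tate plus semisimplicity implies $D$), and then transporting the resulting orthogonal decomposition of $H^4$ along the blow-up $\wtilde A\to A$ and the finite cover $\wtilde A\to S$. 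So while your general framework is sound as a statement of the Lefschetz--Grothendieck program, the specific inputs you cite would not suffice to establish the instances of \cref{conj:Dk} that the paper relies on.
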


\begin{conjD}
\label{conj:D}
\cref{conj:Dk} hold for all $0\le k\le n$.
\end{conjD}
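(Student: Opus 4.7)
My plan is to adapt the argument proving Theorem~\ref{thm:B}\eqref{Assertion:B-2} and Theorem~\ref{thm:C}\eqref{Assertion:C-1} to the setting of effective finite correspondences on an abelian variety. Three structural features of the setting make this possible. First, since $X$ is an abelian variety, Conjecture~\hyperref[conj:Gr]{$G_r(X)$} holds by \cref{lemma:Gr-holds-AV}, and the standard conjecture~\hyperref[conj:D]{$D$} holds on both $X$ and $X\times X$ by Lieberman; in particular, $\cl_X$ realizes $\N^k(X)_{\bQ_\ell}$ as an $f^*$-equivariant subspace of $H^{2k}(X)$, so the easy direction $\rho(f^*|_{\N^k(X)_\bR}) \le \rho(f^*|_{H^{2k}(X)})$ is automatic. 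Second, effective finite correspondences are closed under iteration, and by functoriality of finite correspondences on cohomology and on numerical groups one has $(f^{\circ m})^* = (f^*)^m$ on $H^\bullet(X)$ and on $\N^\bullet(X)_\bR$. Third, effectiveness forces every intersection number $\deg_j(f^{\circ m})$ to be nonnegative, which is essential for the $G_r$-trick below.

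For the hard upper bound I would imitate the proof of Theorem~\ref{thm:C}\eqref{Assertion:C-1}: combining Conjecture~\hyperref[conj:Gr]{$G_r(X)$} with Lieberman's \cref{lemma:Lieberman} (applied via $D^n(X\times X)$) yields a constant $C>0$, independent of $r\in \bQ_{>0}$, of the iterate index $m\in \bN_{>0}$, and of $f$, such that
\[
\bigl\|(G_r\circ f^{\circ m})^*|_{H^i(X)}\bigr\|_\iota \;\le\; C\,\deg(G_r\circ f^{\circ m}) \;=\; C\sum_{j=0}^n \binom{n}{j}r^{2j}\,\deg_j(f^{\circ m}),
\]
the right-hand side being a sum of nonnegative terms by effectiveness. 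Since $G_r^*$ acts on $H^i(X)$ as $r^i\cdot\id$, the left-hand side equals $r^i\,\|(f^*)^m|_{H^i(X)}\|_\iota$. Substituting $r=R^m$ for a free parameter $R>0$, taking $m$-th roots, and letting $m\to\infty$, while controlling each summand via the elementary bound $\deg_j(f^{\circ m}) \le D\,\|(f^*)^m|_{\N^j(X)_\bR}\|$, one arrives at the master estimate
\[
R^i\,\rho\bigl(f^*|_{H^i(X)}\bigr) \;\le\; \max_{0\le j\le n} R^{2j}\,\rho\bigl(f^*|_{\N^j(X)_\bR}\bigr) \qquad \text{for every } R>0.
\]

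Writing $\rho_j \coloneqq \rho(f^*|_{\N^j(X)_\bR})$, I would then minimize the right-hand side over $R$ by a Newton-polygon analysis. Using the log-concavity $\rho_j^2 \ge \rho_{j-1}\rho_{j+1}$ — which on an abelian variety follows from the Khovanskii--Teissier inequality applied to the effective numerical classes $(f^{\circ m})^* H_X^j$ — the active vertex of the max for $i = 2k$ is $j = k$ with value $\rho_k$ (giving $\rho(f^*|_{H^{2k}(X)}) \le \rho_k$, hence equality with the easy direction), and for $i = 2k+1$ the two active vertices are $j=k$ and $j=k+1$, whose balanced value is $\sqrt{\rho_k\rho_{k+1}}$, precisely the claimed bound.

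The main obstacle I anticipate is the uniform log-concavity step: while Khovanskii--Teissier is well-established on abelian varieties, one needs it applied to the pulled-back classes $(f^{\circ m})^*H_X^j$ for all $m$, and with constants that do not blow up, so that the limit $m\to\infty$ remains valid. A secondary (but purely bookkeeping) point is to confirm that Lieberman's bound $\|h^*|_{H^i(X)}\|_\iota \le C\|h\|$ truly applies to the homological correspondence $h = G_r \circ f^{\circ m}$, which is in general not dynamical — this reduces to the fact that the bound depends only on $h$ being a codimension-$n$ class on $X\times X$, not on any dominance property of its projections. Once these two points are in place, the argument concludes exactly as in Theorem~\ref{thm:B}\eqref{Assertion:B-2}.
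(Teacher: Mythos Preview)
There is a mismatch: the displayed statement is \emph{Conjecture}~\hyperref[conj:D]{$D(X)$} (the standard conjecture that homological and numerical equivalence coincide), which the paper does not prove. Your proposal is visibly aimed at \emph{Theorem}~\ref{thm:D} (spectral radius comparison for effective finite correspondences on abelian varieties). I will evaluate it as a proof of Theorem~\ref{thm:D}.

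With that reading, there is a genuine gap. You assert that ``the standard conjecture~\hyperref[conj:D]{$D$} holds on both $X$ and $X\times X$ by Lieberman,'' and then use $D^n(X\times X)$ to obtain the norm estimate $\|(G_r\circ f^{\circ m})^*|_{H^i(X)}\|_\iota \le C\deg(G_r\circ f^{\circ m})$ as in the proof of Theorem~\ref{thm:C}\eqref{Assertion:C-1}. But Lieberman's result is only in characteristic zero (see Remark~\ref{rmk:D}(2)); in positive characteristic one has at best Clozel's result for a positive-density set of primes $\ell$ (Remark~\ref{rmk:D}(3)). Since Theorem~\ref{thm:D} is stated over an arbitrary algebraically closed field and for all $\ell\neq\operatorname{char}(\bk)$, your argument as written does not cover the positive-characteristic case, which is precisely the case of interest.

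The paper circumvents this by \emph{not} going through norms at all. Its proof of Theorem~\ref{thm:D} invokes only \cref{lemma:Gr-holds-AV} (which gives $G_r$ unconditionally on abelian varieties, with $G_r\circ f^{\circ m}$ effective) and Theorem~\ref{thm:B}\eqref{Assertion:B-2}, whose input is the \emph{trace} bound of Theorem~\ref{thm:B}\eqref{Assertion:B-1}. The trace bound needs only $G_r$ and the Lefschetz trace formula, never $D^n(X\times X)$; and the log-concavity issue you flag is handled in the paper at the level of a \emph{single} correspondence, component-by-component (Lemmas~\ref{lemma:log-concave} and \ref{lemma:Gr+log-concave}), with the reducible case recombined via Cauchy--Schwarz in the proof of Theorem~\ref{thm:B}\eqref{Assertion:B-1}. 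If you replace your norm estimate by the trace estimate and follow that route, the dependence on Conjecture~$D$ disappears and your remaining outline (iterating, taking $m$-th roots, and extracting spectral radii) becomes exactly the paper's argument.
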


\begin{remark}
\label{rmk:D}
\begin{enumerate}[(1)]
\item Conjecture~\hyperref[conj:Dk]{$D^1(X)$} holds (see Matsusaka \cite{Matsusaka57}).

\item In characteristic zero, \cref{conj:D} holds for abelian varieties by Lieberman.

\item For abelian varieties over finite fields of prime characteristic $p$, Clozel \cite{Clozel99} proved that there are infinitely many primes $\ell\neq p$ (in fact, a set of primes of positive density), such that \cref{conj:D} holds for $\ell$-adic \'etale cohomology.
Based on this, recently, Ancona \cite{Ancona21} extends Clozel's result above to arbitrary fields in positive characteristic.
\end{enumerate}
\end{remark}

Denote by $H_{\alg}^{2k}(X)$ the $\bQ_\ell$-vector subspace of $H^{2k}(X)$ generated by all algebraic cohomology classes.
By Poincar\'e duality, one can define the "transcendental" part of $H^{2k}(X)$ by
\begin{equation*}
%\label{eq:transcendental-part}
H^{2k}_{\tr}(X) \coloneqq H^{2n-2k}_{\alg}(X)^{\perp} \coloneqq \{ \alpha \in H^{2k}(X) : \alpha \cup \beta = 0, \textrm{ for all } \beta\in H^{2n-2k}_{\alg}(X) \}.
\end{equation*}
In other words, $H^{2k}_{\tr}(X)$ is the kernel of the following natural linear map:
\begin{equation}
\label{eq:H2k-tr}
H^{2k}(X) \lra \Hom_{\bQ_\ell}(H^{2n-2k}_{\alg}(X), \bQ_\ell), \quad \alpha \mapsto (\beta \mapsto \alpha \cup \beta).
\end{equation}

\begin{lemma}
\label{lemma:decomposition}
\cref{conj:Dk} holds if and only if there is a direct sum decomposition of $H^{2k}(X)$ as follows:
\begin{equation}
\label{eq:direct-sum-decomposition}
H^{2k}(X) = H^{2k}_{\alg}(X) \oplus H^{2k}_{\tr}(X).
\end{equation}
\end{lemma}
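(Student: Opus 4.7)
The plan is to unpack both sides through the cup product pairing and Poincaré duality, reducing to a direct computation. The key observation is that for an algebraic class $\alpha = \cl_X(Z)$ with $Z \in \Z^k(X)_{\bQ}$, we have $\alpha \in H^{2k}_{\tr}(X)$ if and only if $\alpha \cup \cl_X(W) = 0$ for every $W \in \Z^{n-k}(X)_{\bQ}$; unwinding the cycle class map through the identification $H^{2n}(X) \cong \bQ_\ell$, this translates to $\deg(Z \cdot W) = 0$ for all such $W$, i.e., $Z \sim_{\num} 0$. Hence
\[
H^{2k}_{\alg}(X) \cap H^{2k}_{\tr}(X) = \ker\bigl(H^{2k}_{\alg}(X) \twoheadrightarrow \N^k(X)_{\bQ_\ell}\bigr),
\]
where the surjection is the natural quotient by homological-modulo-numerical equivalence. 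This kernel is precisely the obstruction to $D^k(X)$, so it vanishes if and only if $D^k(X)$ holds. This observation immediately yields the $(\Leftarrow)$ direction and the ``trivial intersection'' half of $(\Rightarrow)$.

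For the remaining surjectivity $H^{2k}_{\alg}(X) + H^{2k}_{\tr}(X) = H^{2k}(X)$, I would dimension count. By Poincaré duality, the map \eqref{eq:H2k-tr} from $H^{2k}(X)$ to $\Hom_{\bQ_\ell}(H^{2n-2k}_{\alg}(X), \bQ_\ell)$ is surjective with kernel $H^{2k}_{\tr}(X)$, so
\[
\dim H^{2k}_{\tr}(X) = \dim H^{2k}(X) - \dim H^{2n-2k}_{\alg}(X).
\]
Under $D^k(X)$, the cup product restricted to $H^{2k}_{\alg}(X) \times H^{2n-2k}_{\alg}(X)$ factors through the perfect numerical pairing $\N^k(X)_{\bQ_\ell} \times \N^{n-k}(X)_{\bQ_\ell} \to \bQ_\ell$, identifying $H^{2k}_{\alg}(X) \cong \N^k(X)_{\bQ_\ell}$ and pinning down the image of $H^{2k}_{\alg}(X)$ inside $\Hom(H^{2n-2k}_{\alg}(X), \bQ_\ell)$ as the annihilator of $\ker(H^{2n-2k}_{\alg}(X) \twoheadrightarrow \N^{n-k}(X)_{\bQ_\ell})$. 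Chasing the factorization
\[
H^{2k}_{\alg}(X) \xrightarrow{\sim} \N^k(X)_{\bQ_\ell} \xrightarrow{\sim} \bigl(\N^{n-k}(X)_{\bQ_\ell}\bigr)^{*} \hookrightarrow \bigl(H^{2n-2k}_{\alg}(X)\bigr)^{*}
\]
then gives $\dim H^{2k}_{\alg}(X) = \dim \N^k(X)_{\bQ_\ell} = \dim \N^{n-k}(X)_{\bQ_\ell}$, which combines with the codimension formula for $H^{2k}_{\tr}(X)$ to force the sum to be all of $H^{2k}(X)$.

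The main obstacle is the final dimension match: verifying that $\dim H^{2n-2k}_{\alg}(X)$ agrees with $\dim \N^{n-k}(X)_{\bQ_\ell}$ so that the inclusion $\N^k(X)_{\bQ_\ell} \hookrightarrow (H^{2n-2k}_{\alg}(X))^*$ above is actually an equality. This is symmetric in $k$ and $n-k$, and I expect the clean way to handle it is to phrase the decomposition as the nondegeneracy (on both sides) of the restricted cup product pairing $H^{2k}_{\alg}(X) \times H^{2n-2k}_{\alg}(X) \to \bQ_\ell$, which follows from the perfectness of the numerical pairing once the kernel on the $H^{2k}_{\alg}$-side is eliminated by $D^k(X)$. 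Done this way, step~1 does all the heavy lifting and step~2 is pure linear algebra in the Poincaré-dual spaces.
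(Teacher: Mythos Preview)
Your approach is essentially the paper's: reduce both directions to the vanishing of $H^{2k}_{\alg}(X)\cap H^{2k}_{\tr}(X)$ via the cycle class map, then handle surjectivity by a dimension count through Poincar\'e duality. The key observation and the $(\Leftarrow)$ direction are correct and match the paper.

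The gap is exactly where you flag it, but your proposed fix does not close it. You claim that nondegeneracy \emph{on both sides} of the restricted pairing $H^{2k}_{\alg}(X)\times H^{2n-2k}_{\alg}(X)\to\bQ_\ell$ follows from perfectness of the numerical pairing once $D^k(X)$ kills the left kernel. It does not: under $D^k(X)$ the pairing factors as
\[
H^{2k}_{\alg}(X)\xrightarrow{\ \sim\ }\N^k(X)_{\bQ_\ell},\qquad H^{2n-2k}_{\alg}(X)\longsurjmap \N^{n-k}(X)_{\bQ_\ell},
\]
followed by the perfect numerical pairing, so the right kernel is precisely $\ker\bigl(H^{2n-2k}_{\alg}(X)\to\N^{n-k}(X)_{\bQ_\ell}\bigr)$, which is the obstruction to $D^{n-k}(X)$, not to $D^k(X)$. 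Killing the left kernel of a bilinear form says nothing about the right kernel in general.

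The paper resolves this by invoking the standard equivalence $D^k(X)\Leftrightarrow D^{n-k}(X)$ (see \cite[Proposition~3.6]{Kleiman68}): assuming both, one has $H^{2k}_{\alg}(X)\cong\N^k(X)_{\bQ_\ell}$ and $H^{2n-2k}_{\alg}(X)\cong\N^{n-k}(X)_{\bQ_\ell}$, whence $\dim H^{2k}_{\alg}(X)=\dim H^{2n-2k}_{\alg}(X)$ and your dimension count goes through. Once you insert this one citation, your argument is complete and coincides with the paper's.
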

\begin{proof}
We first show the "if" part.
Suppose that we have the above direct sum decomposition \eqref{eq:direct-sum-decomposition}; in particular, $H^{2k}_{\alg}(X) \cap H^{2k}_{\tr}(X) = 0$.
Let $Z \in \Z^k(X)_\bQ$ such that $Z\num 0$.
Clearly, $\cl_X(Z) \in H^{2k}_{\alg}(X)$.
On the other hand, choose $Z'_1,\ldots,Z'_m \in \Z^{n-k}(X)_\bQ$ such that their cohomology classes form a $\bQ_\ell$-basis of $H^{2n-2k}_{\alg}(X)$.
By assumption, $\cl_X(Z) \cup \cl_X(Z'_i) = Z\cdot Z'_i = 0$ for all $i$.
It thus follows that $\cl_X(Z) \in H^{2k}_{\tr}(X)$ and hence $\cl_X(Z) = 0$, which shows that \cref{conj:Dk} holds.

Conversely, suppose that \cref{conj:Dk} (equivalently, \hyperref[conj:Dk]{$D^{n-k}(X)$}) holds.
Then by \cite[Proposition~3.6]{Kleiman68}, one has $H^{2k}_{\alg}(X) = \N^k(X) \otimes_\bZ \bQ_\ell$ and $H^{2n-2k}_{\alg}(X) = \N^{n-k}(X) \otimes_\bZ \bQ_\ell$.
Let $Z_1,\ldots,Z_m \in \Z^{k}(X)_\bQ$ and $Z'_1,\ldots,Z'_m \in \Z^{n-k}(X)_\bQ$ be such that their cohomology classes form $\bQ_\ell$-bases of $H^{2k}_{\alg}(X)$ and $H^{2n-2k}_{\alg}(X)$, respectively.
We may assume further that they are dual to each other, i.e., $Z_i\cdot Z'_j = \delta_{ij}$.
Now, we let $\alpha = \sum_i a_i \cl_X(Z_i) \in H^{2k}_{\alg}(X)$ with $a_i\in \bQ_\ell$.
If $\alpha \in H^{2k}_{\tr}(X)$ also, then by definition, one has $0 = \alpha \cup \cl_X(Z'_j) = \sum_i a_i \cl_X(Z_i)\cup \cl_X(Z'_j) = \sum_i a_i Z_i\cdot Z'_j = a_j$ for all $j$, which implies that $\alpha = 0$.
So we prove that the right-hand side of \cref{eq:direct-sum-decomposition} is a direct sum.
Since $H^{2k}_{\tr}(X)$ is the kernel of the homomorphism \eqref{eq:H2k-tr}, it follows that
\begin{align*}
\dim_{\bQ_\ell} H^{2k}(X) &\le \dim_{\bQ_\ell} H^{2k}_{\tr}(X) + \dim_{\bQ_\ell} \Hom_{\bQ_\ell}(H^{2n-2k}_{\alg}(X), \bQ_\ell) \\
&= \dim_{\bQ_\ell} H^{2k}_{\tr}(X) + \dim_{\bQ_\ell} H^{2k}_{\alg}(X) \\
&= \dim_{\bQ_\ell}(H^{2k}_{\tr}(X) \oplus H^{2k}_{\alg}(X)).
\end{align*}
We thus obtain the direct sum decomposition \eqref{eq:direct-sum-decomposition} and conclude the proof of \cref{lemma:decomposition}.
\end{proof}

\subsection{Correspondences}
\label{subsec:corr}

Correspondences are natural generalizations of graphs of morphisms or rational maps.
They were used essentially by Weil in his proof of Riemann hypothesis for curves over finite fields.
They also play an important role in the construction of motives (see \cite{MNP13}).
We begin with the definition of correspondences and refer to \cite[Chapter~16]{Fulton98} for details.

\begin{definition}
\label{def:corr}
Let $X$ and $Y$ be smooth projective varieties of dimension $n$ over $\bk$.
Throughout, unless otherwise stated, a {\it correspondence} $f$ from $X$ to $Y$ is a rational algebraic cycle of codimension $n$, or its equivalence class, on $X \times Y$, i.e., $f\in \Z^n(X\times Y)_\bQ$ or $\Z^n(X\times Y)_\bQ/\!\sim$, where $\sim$ is an adequate equivalence relation (e.g., rational equivalence $\sim_{\rat}$, homological equivalence $\sim_{\hom}$, or numerical equivalence $\num$; see \cite[\S1.2]{MNP13} for a discussion on various adequate equivalence relations).
We shall write $f \colon X \vdash Y$ to denote a correspondence $f$ from $X$ to $Y$.
We also use the following shorthand notation:
\begin{align*}
\Corr^0(X,Y) &\coloneqq \Z^n(X\times Y)_\bQ, \\
\Corr^0_{\sim}(X,Y) &\coloneqq \Z^n(X\times Y)_\bQ/\!\sim.
\end{align*}
\end{definition}

The canonical {\it transpose} $f^{\sT} \colon Y\vdash X$ of a correspondence $f \colon X\vdash Y$ is defined as the pushforward of $f$ under the involution map $\tau \colon X\times Y \to Y\times X$ which interchanges the coordinates.

A correspondence $f \colon X\vdash Y$ is {\it effective}, if it can be represented by an effective rational $n$-cycle on $X\times Y$.
It is further called {\it irreducible}, if $f$ is represented by an (irreducible) algebraic subvariety of $X\times Y$ of codimension $n$.

\begin{example}
Let $\pi \colon X \to Y$ be a morphism of smooth projective varieties of dimension $n$ over $\bk$.
Then its graph $\Gamma_\pi$, defined by the image of the diagonal $\Delta_X$ under the product morphism $\id_X \times \pi \colon X \times X \to X \times Y$, is an irreducible algebraic cycle of codimension $n$ on $X\times Y$, hence an irreducible correspondence from $X$ to $Y$.
Its transpose is thus denoted by $\Gamma_\pi^\sT$ (or $\pi^\sT$).
Similarly, one can consider the graph $\Gamma_\pi$ of a rational map $\pi \colon X \ratmap Y$, which is also an irreducible correspondence from $X$ to $Y$.
\end{example}

\subsection{Composition of correspondences}
\label{subsec:comp}

Correspondences can be naturally composed in intersection theory (see, e.g., \cite[\S16.1]{Fulton98}).
More precisely, given two arbitrary correspondences $f \colon X\vdash Y$ and $g \colon Y\vdash Z$, the composite correspondence, denoted by $g\circ f$, is defined by
\begin{equation*}
%\label{eq:comp}
g\circ f \coloneqq \pr_{13,*}(\pr_{12}^*f \cdot \pr_{23}^*g),
\end{equation*}
where the $\pr_{ij}$ denote the natural projections from $X\times Y\times Z$ to the appropriate factors, respectively.
Note that the intersection is in $\Z^{2n}(X\times Y\times Z)_\bQ/\!\sim$, and hence the composite correspondence $g\circ f \in \Corr^0_{\sim}(X,Z)$.
In particular, this makes $\Corr^0_{\sim}(X,X)$ a (not necessarily commutative) ring. Clearly, the identity is given by the class of the diagonal $\Delta_X$.
To distinguish the so-called dynamical composition of dynamical correspondences that will be introduced in \Cref{subsec:dyn-corr}, we simply call $g\circ f$ the {\it composition} of $f$ followed by $g$.
For a correspondence $f\colon X\vdash X$ and any $m\in \bN_{>0}$, we denote by $f^{\circ m}$ the {\it $m$-th iterate} of $f$.

\begin{remark}
\label{rmk:composition-morphism}
It is worth mentioning that given two morphisms $\phi \colon X\to Y$ and $\psi \colon Y\to Z$ of smooth projective varieties of dimension $n$ over $\bk$, we always have that $\Gamma_\psi \circ \Gamma_\phi = \Gamma_{\psi \circ \phi}$ as correspondences from $X$ to $Z$, where $\psi \circ \phi$ denotes the standard composition of morphisms.
However, if both $\phi$ and $\psi$ are endomorphisms of an abelian variety, then in general, we do {\it not} have that $\Gamma_\phi + \Gamma_\psi = \Gamma_{\phi + \psi}$.
\end{remark}

\subsection{Operations of correspondences on cycle groups and cohomology}
\label{subsec:operation}

We first recall the definition of the natural pullback and pushforward actions of correspondences on algebraic cycle class groups and cohomology groups.

\begin{definition}
\label{def:pull-push-corr}
Let $f \colon X\vdash Y$ be a correspondence from $X$ to $Y$.
We define the pullback $f^*$ and pushforward $f_*$ on the algebraic cycle class groups as follows:
\begin{align*}
f_* &\colon \Z^k(X)_\bQ/\!\sim \, \lra \Z^k(Y)_\bQ/\!\sim, \quad \alpha \mapsto \pr_{2,*}(f \cdot \pr_1^*{\alpha}), \\
f^* &\colon \Z^k(Y)_\bQ/\!\sim \, \lra \Z^k(X)_\bQ/\!\sim, \quad \beta \mapsto \pr_{1,*}(f \cdot \pr_2^*{\beta}),
\end{align*}
where the $\pr_i$ denote the natural projections from $X\times Y$ to $X$ and $Y$, respectively.

Similarly, if $f \in\Corr^0_{\sim}(X,Y)$ for an equivalence relation $\sim$ finer than, or equal to, homological equivalence relation $\sim_{\hom}$, we can define natural pullback $f^*$ and pushforward $f_*$ on the cohomology groups $H^i(X)$ as follows:
\begin{align*}
f_* &\colon H^i(X) \lra H^i(Y), \quad \alpha \mapsto \pr_{2,*}(\cl_{X\times Y}(f) \cup \pr_1^*{\alpha}), \\
f^* &\colon H^i(Y) \lra H^i(X), \quad \beta \mapsto \pr_{1,*}(\cl_{X\times Y}(f) \cup \pr_2^*{\beta}),
\end{align*}
where $\cl_{X\times Y} \colon \Z^k(X\times Y)_\bQ \lra H^{2k}(X\times Y)$ is the cycle class map, which factors through $\Z^k(X\times Y)_\bQ/\!\sim$ by assumption.
\end{definition}

\begin{remark}
The correspondences with respect to rational equivalence operate on the rational Chow groups $\CH^{\bullet}(-)_\bQ$, on cohomology groups $H^{\bullet}(-)$, and on the numerical cycle class groups $\N^{\bullet}(-)_\bQ$ or $\N^{\bullet}(-)_\bR$.
The correspondences with respect to homological equivalence operate on cohomology groups and on the numerical cycle class groups.
Note, however, that the correspondences with respect to numerical equivalence operate only on cohomology groups provided the standard conjecture \hyperref[conj:D]{$D$} holds.
\end{remark}

The composition of correspondences is functorial by the assertion (1) of the following.

\begin{proposition}[{cf.~\cite[Proposition~16.1.2]{Fulton98}}]
\label{prop:functorial}
Let $f \colon X\vdash Y$ and $g \colon Y\vdash Z$ be correspondences.
Then the following statements hold.
\begin{itemize}
\item[(1)] $(g\circ f)_* = g_* \circ f_*$ and $(g\circ f)^* = f^* \circ g^*$.

\item[(2)] $(f^\sT)_* = f^*$ and $(f^\sT)^* = f_*$.

\item[(3)] If $f$ is the graph $\Gamma_\pi$ of a proper morphism $\pi \colon X \to Y$, then $f_* = \pi_*$, the proper pushforward; if $\pi$ is flat, then $f^* = \pi^*$, the flat pullback.
\end{itemize}
\end{proposition}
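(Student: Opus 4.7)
The plan deduces all three parts from two standard tools that hold in both the cycle-class and the Weil-cohomology settings: the projection formula for proper morphisms and flat base change across Cartesian squares. All calculations live on the triple product $X\times Y\times Z$ with its three pairwise projections $p_{ij}$ and single-factor projections $p_i$; I likewise use $\pi_i$, $\sigma_i$, and $q_i$ for the projections from $X\times Y$, $Y\times Z$, and $X\times Z$, respectively.

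For (1), the linchpin is the Cartesian square
\[
\begin{tikzcd}
X\times Y\times Z \arrow[r, "p_{23}"] \arrow[d, "p_{12}"'] & Y\times Z \arrow[d, "\sigma_1"] \\
X\times Y \arrow[r, "\pi_2"'] & Y
\end{tikzcd}
\]
whose legs are smooth. To prove $(g\circ f)_{*}=g_{*}\circ f_{*}$, I would expand $g_{*}(f_{*}\alpha)=\sigma_{2,*}(g\cdot\sigma_{1}^{*}\pi_{2,*}(f\cdot\pi_{1}^{*}\alpha))$, rewrite $\sigma_{1}^{*}\pi_{2,*}$ as $p_{23,*}p_{12}^{*}$ via flat base change, apply the projection formula for $p_{23}$ to absorb $g$ as $p_{23}^{*}g$, collapse $\sigma_{2,*}p_{23,*}=p_{3,*}=q_{2,*}p_{13,*}$, and finally apply the projection formula for $p_{13}$ together with the definition $g\circ f=p_{13,*}(p_{12}^{*}f\cdot p_{23}^{*}g)$ to recognize $(g\circ f)_{*}\alpha$ on the nose. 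The pullback identity $(g\circ f)^{*}=f^{*}\circ g^{*}$ then follows either by the mirror computation or, once (2) is in hand, by transposition using $(g\circ f)^{\sT}=f^{\sT}\circ g^{\sT}$.

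Part (2) is essentially a definition chase: writing $f^{\sT}=\tau_{*}f$ with $\tau\colon X\times Y\to Y\times X$ the coordinate swap, and using that $\tau$ is an isomorphism interchanging the two projections, one reads off at once that the pushforward action of $f^{\sT}$ on $Y\times X$ agrees with the pullback action of $f$ on $X\times Y$, and symmetrically. Part (3) follows by writing the graph as $\Gamma_{\pi}=\gamma_{\pi,*}[X]$ with $\gamma_{\pi}\colon X\to X\times Y$ the graph morphism; one application of the projection formula together with the relations $\pr_{1}\circ\gamma_{\pi}=\id_{X}$ and $\pr_{2}\circ\gamma_{\pi}=\pi$ collapses $(\Gamma_{\pi})_{*}\alpha$ to $\pi_{*}\alpha$, and the flat case $(\Gamma_{\pi})^{*}=\pi^{*}$ is strictly parallel.

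There is no genuine obstacle here: the entire proof is bookkeeping. The one item meriting care is to confirm that both core tools (projection formula and flat base change) are available in whichever category one is working in—for rational equivalence this is the content of \cite[Chapters~1--8]{Fulton98}, and for $\ell$-adic cohomology it belongs to the standard Weil-cohomology package—so that the indicated manipulations are legitimate in each case.
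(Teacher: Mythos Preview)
Your proof sketch is correct and is precisely the standard argument; the paper itself does not give a proof of this proposition but simply cites \cite[Proposition~16.1.2]{Fulton98}, whose proof proceeds exactly as you outline (projection formula plus flat base change on the Cartesian square of projections, with the graph morphism handling part~(3)). There is nothing to add.
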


The following Lefschetz trace formula is particularly useful in studying correspondences.
The formula also indicates that the traces of correspondences acting on cohomology groups are actually numerical invariants provided the standard conjecture \hyperref[conj:C]{$C$} holds.

\begin{proposition}[{Lefschetz trace formula, cf.~\cite[Proposition~1.3.6]{Kleiman68}}]
\label{prop:Lefschetz-trace-formula}
Let $f\colon X\vdash X$, $g \colon X\vdash Y$, and $h \colon Y\vdash X$ be correspondences.
Then we have
\begin{align*}
g \cdot h^\sT &= \sum_{i=0}^{2n} (-1)^i \Tr((h\circ g)^*|_{H^i(X)}), \\
f \cdot \Delta_X &= \sum_{i=0}^{2n} (-1)^i \Tr(f^*|_{H^i(X)}), \\
\cl_{X\times X}(f) \cup \pi_{2n-i} &= (-1)^i \Tr(f^*|_{H^i(X)}).
\end{align*}
Suppose moreover that \cref{conj:C} holds.
Then the last formula reads as
\begin{equation*}
%\label{eq:Lefschetz-trace-formula}
f \cdot \Delta_{2n-i} = (-1)^i \Tr(f^*|_{H^i(X)}).
\end{equation*}
\end{proposition}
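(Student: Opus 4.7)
The plan is to prove the three displayed identities in reverse order: first the third (which is the most fundamental), then deduce the second by summing over $i$, and finally deduce the first by applying the second to the composite correspondence $h \circ g$. The version under Conjecture~\hyperref[conj:C]{$C$} follows from the third by lifting $\pi_{2n-i}$ to the algebraic cycle $\Delta_{2n-i}$.

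For the third identity, I would decompose $\cl_{X\times X}(f) = \sum_j f_{(j)}$ according to the K\"unneth formula $H^{2n}(X\times X) = \bigoplus_j H^j(X)\otimes H^{2n-j}(X)$. A direct computation with the projection formula shows that, under the Poincar\'e duality isomorphism $H^j(X)\otimes H^{2n-j}(X) \isom \End_{\bQ_\ell}(H^j(X))$, the K\"unneth component $f_{(j)}$ corresponds precisely to $f^*|_{H^j(X)}$; analogously, $\pi_i \in H^i(X)\otimes H^{2n-i}(X)$ corresponds to $\id_{H^i(X)}$ by construction. Cupping $\cl_{X\times X}(f)$ with $\pi_{2n-i}$ and integrating over $X\times X$, only the bidegree-matching pairing of $f_{(i)}$ against $\pi_{2n-i}$ survives. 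Using the graded commutativity of cup product to swap the two middle tensor factors into the order needed to extract the trace introduces a sign $(-1)^{i(2n-i)} = (-1)^i$, yielding the formula $\cl_{X\times X}(f) \cup \pi_{2n-i} = (-1)^i \Tr(f^*|_{H^i(X)})$.

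Summing this over $i$ and reindexing $i \mapsto 2n-i$, together with $\cl_{X\times X}(\Delta_X) = \sum_i \pi_i$ and the compatibility of the cycle class map with intersection, immediately yields $f\cdot \Delta_X = \int_{X\times X} \cl_{X\times X}(f) \cup \cl_{X\times X}(\Delta_X) = \sum_i (-1)^i \Tr(f^*|_{H^i(X)})$. For the first identity, I would reduce to the second applied to $h\circ g \colon X \vdash X$. Unwinding the definition $h\circ g = \pr_{13,*}(\pr_{12}^*g \cdot \pr_{23}^*h)$ on $X\times Y\times X$ and applying the projection formula with the diagonal $\Delta_X \subset X\times X$, one shows that $(h\circ g)\cdot \Delta_X$ on $X\times X$ is the pushforward to a point of $\pr_{12}^*g \cdot \pr_{23}^*h \cdot \pr_{13}^*\Delta_X$, and the factor $\pr_{13}^*\Delta_X$ identifies its support with $X\times Y$ in a way that converts the integrand into $g \cdot h^\sT$. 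Applying the second identity to $h\circ g$ then gives the first. Finally, under Conjecture~\hyperref[conj:C]{$C$}, replacing $\pi_{2n-i}$ by $\cl_{X\times X}(\Delta_{2n-i})$ and using again that the cycle class map respects intersection promotes the cohomological identity to the algebraic identity $f\cdot \Delta_{2n-i} = (-1)^i \Tr(f^*|_{H^i(X)})$.

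The main obstacle is the sign bookkeeping in the third identity: one must track signs coming from graded commutativity $a\cup b = (-1)^{|a||b|} b\cup a$ in the K\"unneth decomposition and from Poincar\'e duality in the identification $H^j(X)\otimes H^{2n-j}(X) \isom \End_{\bQ_\ell}(H^j(X))$. Once this is handled correctly so that the sign $(-1)^i$ appears, the remaining parts reduce to formal manipulations with the functorial properties of composition of correspondences collected in \cref{prop:functorial}.
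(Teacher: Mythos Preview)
The paper does not supply its own proof of this proposition: it is stated with a reference to \cite[Proposition~1.3.6]{Kleiman68} and the surrounding text explicitly refers the reader there for the argument. So there is no in-paper proof to compare against.

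That said, your outline is the standard route and is correct. The one step worth expanding carefully when you write it up is the reduction $g\cdot h^{\sT} = (h\circ g)\cdot \Delta_X$: after writing $(h\circ g)\cdot \Delta_X = \pr_{13,*}(\pr_{12}^*g\cdot \pr_{23}^*h)\cdot \Delta_X$ and applying the projection formula, you land on $\pr_{12}^*g\cdot \pr_{23}^*h$ restricted to $\pr_{13}^{-1}(\Delta_X)\cong X\times Y$ via $(x,y)\mapsto (x,y,x)$; under this identification $\pr_{12}^*g$ restricts to $g$ and $\pr_{23}^*h$ restricts to $h^{\sT}$, giving exactly $g\cdot h^{\sT}$. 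Your sign analysis for the third identity is also the right one (only the pairing of the $i$-th K\"unneth component of $f$ against $\pi_{2n-i}$ survives, and the swap introduces $(-1)^{i(2n-i)}=(-1)^i$). The remaining assertions are immediate from the cycle class map being compatible with intersection and from $\cl_{X\times X}(\Delta_X)=\sum_i\pi_i$.
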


The above Lefschetz trace formula is also valid for general homological correspondences.
In this paper, we only focus on algebraic correspondences in $\Corr^0_{\sim}(X,X)$ and refer the reader to \cite[Proposition~1.3.6]{Kleiman68} for the general case.

We also need a useful lemma due to Lieberman.

\begin{lemma}[{Lieberman's Lemma, cf.~\cite[Lemma~2.1.3]{MNP13}}]
\label{lemma:Lieberman}
Let $\phi \colon X\vdash Y$, $\psi \colon X'\vdash Y'$, $f \colon X\vdash X'$, and $g \colon Y\vdash Y'$ be correspondences shown as follows:
\[
\xymatrix{
X \ar@{|-}[r]^{f} \ar@{|-}[d]_{\phi} & X' \ar@{|-}[d]_{\psi} \\
Y \ar@{|-}[r]^{g} & Y'. }
\]
Then we have
\begin{align*}
(\phi \times \psi)_*(f) &= \psi\circ f\circ \phi^\sT, \\
(\phi \times \psi)^*(g) &= \psi^\sT\circ g\circ \phi,
\end{align*}
where we think of $\phi \times \psi \in \Z^{2n}(X\times Y\times X'\times Y')_\bQ/\!\sim$ as a correspondence from $X\times X'$ to $Y\times Y'$ by interchanging the second and the third factors.
In particular, we have the following pullback/pushforward identities on cycle class groups and cohomology groups:
\begin{align*}
((\phi \times \psi)_*(f))_* &= \psi_*\circ f_*\circ \phi^*, \quad ((\phi \times \psi)^*(g))_* = \psi^*\circ g_*\circ \phi_*, \\
((\phi \times \psi)_*(f))^* &= \phi_*\circ f^*\circ \psi^*, \quad ((\phi \times \psi)^*(g))^* = \phi^*\circ g^*\circ \psi_*.
\end{align*}
\end{lemma}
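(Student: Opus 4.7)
The plan is to reduce the two identities to a single symmetric formula on a fourfold product and then apply the projection formula. The second identity will follow from the first by transposition, and the pullback/pushforward identities for cycle class groups and cohomology will fall out of functoriality (\cref{prop:functorial}).

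Let $W = X\times Y\times X'\times Y'$ and write $\pi_{ij}\colon W\to W_{ij}$ for the projection onto the product of the $i$-th and $j$-th factors. For a moment, view $\phi\in \Z^n(X\times Y)$, $\psi\in \Z^n(X'\times Y')$, $f\in \Z^n(X\times X')$, and $g\in \Z^n(Y\times Y')$. Under the identification of $\phi\times \psi$ as a correspondence $X\times X'\vdash Y\times Y'$ (which is the interchange of the second and third factors), one has on $W$ the equality
\[
\pi_{12}^*(\phi)\cdot \pi_{34}^*(\psi)=(\phi\times \psi)\text{ viewed as a cycle on }W,
\]
and by the very definition of the pushforward of a correspondence
\[
(\phi\times\psi)_*(f)=\pi_{24,*}\bigl(\pi_{13}^*(f)\cdot \pi_{12}^*(\phi)\cdot \pi_{34}^*(\psi)\bigr).
\]
The first step is thus to rewrite the triple composition $\psi\circ f\circ \phi^\sT$ as the same expression. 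Unwinding the definition of composition of correspondences on the threefold product $X\times Y\times X'$ and noting that the transpose $\phi^\sT$ becomes $\phi$ after swapping the first two factors, one finds
\[
f\circ \phi^\sT=\pi'_{23,*}\bigl(\pi'_{12}{}^*(\phi)\cdot \pi'_{13}{}^*(f)\bigr)\quad\text{on }Y\times X',
\]
where $\pi'_{ij}$ denote the projections from $X\times Y\times X'$. Composing once more with $\psi$ on $Y\times X'\times Y'$ gives a triple-pushforward expression; combining the two pushforwards via base change along the fiber square
\[
\begin{tikzcd}
W \arrow[r] \arrow[d] & Y\times X'\times Y' \arrow[d] \\
X\times Y\times X' \arrow[r] & Y\times X'
\end{tikzcd}
\]
and applying the projection formula consolidates everything into a single pushforward from $W$ that matches the formula displayed above.

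The second identity $(\phi\times\psi)^*(g)=\psi^\sT\circ g\circ \phi$ follows immediately from the first: taking the transpose of the correspondence $\phi\times\psi$ interchanges pullback and pushforward by \cref{prop:functorial}(2), and one has $(\phi\times\psi)^\sT=\phi^\sT\times \psi^\sT$; applying the first identity with $(\phi,\psi)$ replaced by $(\phi^\sT,\psi^\sT)$ and with $f$ replaced by $g$ yields the claim, using $(\phi^\sT)^\sT=\phi$. Finally, the four pullback/pushforward identities on cycle class groups and cohomology are obtained by applying $(-)_*$ and $(-)^*$ to both sides of the two cycle-level identities and invoking the functoriality and transpose formulas of \cref{prop:functorial}(1)--(2); for instance,
\[
\bigl((\phi\times\psi)_*(f)\bigr)_*=(\psi\circ f\circ \phi^\sT)_*=\psi_*\circ f_*\circ (\phi^\sT)_*=\psi_*\circ f_*\circ \phi^*,
\]
and the remaining three identities are verbatim.

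The main obstacle is purely bookkeeping: one must keep track of which factor is which across several reorderings, and ensure that the projection formula is applied on Cartesian (not just set-theoretic) fiber products so that the intersection-theoretic identities are valid at the level of rational equivalence, and hence descend to any adequate equivalence relation $\sim$ finer than or equal to $\sim_{\hom}$. No deeper input is required; this is essentially Lieberman's original argument, and it is worth noting that it does not depend on the algebraicity of any K\"unneth component and is compatible with any Weil cohomology theory via the cycle class map.
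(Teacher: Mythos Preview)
The paper does not give its own proof of this lemma; it simply records the statement with a reference to \cite[Lemma~2.1.3]{MNP13} and uses it later as a black box. Your argument---reducing both sides to a single pushforward from the fourfold product $X\times Y\times X'\times Y'$ via the projection formula and base change, then deducing the pullback version by transposition and the operator identities from \cref{prop:functorial}---is exactly the standard proof one finds in the cited reference, and it is correct as written.
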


\subsection{Numerical invariants of correspondences}
\label{subsec:inv-corr}

Let $f\in \Z^n(X\times X)_\bQ$ be a correspondence of $X$.
Let $H_{X\times X} \coloneqq \pr_1^*H_X + \pr_2^*H_X$ be a fixed ample divisor on $X\times X$.
The {\it degree} $\deg(f)$ of $f$ (as an algebraic $n$-cycle on $X\times X$) is
\begin{equation}
\label{eq:degree-corr}
\deg(f) \coloneqq f \cdot H_{X\times X}^n = \sum_{i=0}^n \binom{n}{i} f \cdot \pr_1^*H_X^{n-i}\cdot \pr_2^*H_X^{i}.
\end{equation}
We then call
\begin{equation}
\label{eq:degree-k-corr}
\deg_i(f) \coloneqq f \cdot \pr_1^*H_X^{n-i}\cdot \pr_2^*H_X^{i} = f^*H_X^i \cdot H_X^{n-i}
\end{equation}
the {\it $i$-th degree} of $f$ (with respect to $H_X$).
It is also clear that $\deg_i(f)\ge 0$ for any effective correspondence $f$ of $X$.
Note that the two commonly used degrees of $f$ in the literature are $\deg_0(f)$ and $\deg_n(f)$ which are particularly helpful when studying correspondences of curves.

We can also define norms on $\N^n(X\times X)_\bR$ as in \Cref{subsec:cycle}.
For this, it is more natural to work with real correspondences $f\in \Z^n(X\times X)_\bR$:
\begin{equation}
\label{eq:corr-norm}
\|f\| \coloneqq \inf \big\{\deg(f^+ + f^-) : f \equiv f^+ - f^-, \, f^\pm \in \Eff^n(X\times X) \big\},
\end{equation}
and similarly for the dual norm $\|f\|'$ of $f$ as follows:
\begin{equation}
\label{eq:corr-dual-norm}
\|f\|' \coloneqq \sup \big\{|f \cdot g| : g \in \N^{n}(X\times X)_\bR, \|g\| = 1 \big\}.
\end{equation}
Note that for effective correspondences $f$, we always have $\|f\| = \deg(f)$; see \cref{rmk:norm-effective}.

We now introduce norms on the pullback $f^*$ of correspondences $f$ on $\Nk$.
First, we have the following natural operator norms on $\End_\bR(\Nk)$ induced from the norm $\norm{\cdot}$ and the dual norm $\norm{\cdot}'$ on $\Nk$ defined in \Cref{subsec:cycle}:
for any $\phi \in \End_\bR(\Nk)$,
\begin{equation}
\label{eq:corr-operator-norm}
\|\phi\| \coloneqq \sup \big\{\|\phi(\alpha)\| : \alpha\in \Nk, \|\alpha\|=1 \big\},
\end{equation}
and
\begin{align}
\label{eq:corr-operator-norm'}
\begin{split}
\|\phi\|' &\coloneqq \sup \big\{\|\phi(\alpha )\|' : \alpha \in \Nk, \|\alpha\|' = 1 \big\} \\
&= \sup \big\{|\phi(\alpha ) \cdot \beta| : \alpha \in \Nk, \beta \in \N^{n-k}(X)_\bR, \|\alpha\|' = \|\beta\| = 1 \big\}.
\end{split}
\end{align}

Secondly, for a fixed $k$ with $0\le k\le n$, we consider the following group homomorphism
\[
\begin{array}{ccc}
\Phi_k \colon \Z^n(X\times X)_\bR & \lra & \End_\bR(\Nk) \\
f & \mapsto & f^*|_{\Nk},
\end{array}
\]
which naturally factors through $\N^n(X\times X)_\bR$ by definition.
Also, $\Phi_k$ is surjective since
\[
\N^n(X\times X)_\bR \supset \Nk \otimes \N^{n-k}(X)_\bR \isom \End_\bR(\Nk).
\]
We note that for any $f\in \Ker(\Phi_k)$, its $k$-th degree $\deg_k(f)$ is zero as $f^*H_X^k \num 0$.
Now, in a similar vein, we define an invariant $\norm{\cdot}_1$ on $\End_\bR(\Nk)$ as follows: for any (real) correspondence $f$ of $X$,
\begin{equation}
\label{eq:norm-1}
\big\|{f^*|_{\Nk}}\big\|_1 \coloneqq \inf\left\{ 
\deg_k(f^+ + f^-) \ :
\begin{array}{l}
\Phi_k(f - f^+ + f^-) = 0, \\
f^\pm \in \Eff^n(X\times X)
\end{array}
\right\}.
\end{equation}

\begin{lemma}
\label{lemma:norm}
The above assignment \eqref{eq:norm-1} gives a norm on the $\bR$-vector space $\End_\bR(\Nk)$.
\end{lemma}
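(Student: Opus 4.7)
\emph{Proof plan.} The verification breaks into the four norm axioms, of which well-definedness, finiteness, positive homogeneity, and the triangle inequality are routine. Indeed, the infimum in \eqref{eq:norm-1} depends only on $\Phi_k(f) \in \End_\bR(\Nk)$, since the constraint $\Phi_k(f - f^+ + f^-) = 0$ is equivalent to $\Phi_k(f^+) - \Phi_k(f^-) = \Phi_k(f)$. Positive homogeneity is witnessed by the substitutions $f^\pm \mapsto c f^\pm$ for $c > 0$ and $(f^+, f^-) \mapsto (|c|f^-, |c|f^+)$ for $c < 0$; the triangle inequality follows by summing two effective decompositions and invoking the linearity of $\deg_k$. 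Finiteness rests on the surjectivity of $\Phi_k$ (since $\N^n(X\times X)_\bR$ contains $\Nk \otimes \N^{n-k}(X)_\bR \cong \End_\bR(\Nk)$ by K\"unneth) together with the fact that $\Eff^n(X\times X)$ spans $\N^n(X\times X)_\bR$ over $\bR$.

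The substantive step is positive definiteness, which I would reduce to the following key inequality: there is a constant $C > 0$ such that $\|\Phi_k(f)\| \le C \deg_k(f)$ for every $f \in \Eff^n(X\times X)$, where $\|\cdot\|$ is any fixed norm on $\End_\bR(\Nk)$. Granted this, if $\|\phi\|_1 = 0$, choose effective $f_m^\pm$ with $\Phi_k(f_m^+ - f_m^-) = \phi$ and $\deg_k(f_m^+ + f_m^-) \to 0$; nonnegativity of $\deg_k$ on effective cycles forces $\deg_k(f_m^\pm) \to 0$, and then $\|\Phi_k(f_m^\pm)\| \to 0$, whence $\phi = \lim(\Phi_k(f_m^+) - \Phi_k(f_m^-)) = 0$.

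To establish the key inequality, I would first prove the ``same zero locus'' property: for effective $f$, $\deg_k(f) = 0$ if and only if $\Phi_k(f) = 0$. Decomposing $f = \sum a_i V_i$ with $a_i > 0$ and $V_i$ irreducible reduces to the case of irreducible $V \subset X\times X$ of dimension $n$; a dimension count against generic representatives of $H_X^{n-k}$ and $H_X^k$ shows that $\deg_k(V) = 0$ precisely when $\dim \pr_1(V) < n - k$ or $\dim \pr_2(V) < k$, and in either case the pushforward $\pr_{1,*}(V \cdot \pr_2^*\alpha)$ vanishes for every $\alpha \in \Nk$ on dimensional grounds, so $\Phi_k(V) = 0$. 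Consequently, the linear functional $\psi \mapsto \psi(H_X^k) \cdot H_X^{n-k}$ is strictly positive on $\Phi_k(\Eff^n(X\times X)) \setminus \{0\}$. To propagate this to the closure $\overline{\Phi_k(\Eff^n(X\times X))}$ in $\End_\bR(\Nk)$, I would observe that for any $\psi$ in the closure and any effective $\alpha \in \Nk$, the class $\psi(\alpha)$ lies in $\PsEff^k(X)$; ampleness of $H_X^{n-k}$ then yields $\psi(\alpha) \cdot H_X^{n-k} = 0 \Rightarrow \psi(\alpha) = 0$, and since effective classes span $\Nk$, this forces $\psi = 0$. The key inequality then follows by finite-dimensional compactness of a level-set cross-section of this closed pointed cone.

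The principal obstacle is the dimension analysis for irreducible $V$ together with the extension of strict positivity from $\Phi_k(\Eff^n(X\times X))$ to its closure via pseudoeffective positivity; in positive characteristic, both steps must rely on numerical arguments (Kleiman's criterion) in lieu of any Hodge-theoretic input.
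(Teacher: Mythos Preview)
Your reduction is correct: the routine axioms go through as you say, and positive definiteness does follow from the uniform bound $\|\Phi_k(f)\|\le C\deg_k(f)$ for effective $f$. The gap is in your proof of that bound. The assertion that $\psi(\alpha)\in\PsEff^k(X)$ whenever $\psi\in\overline{\Phi_k(\Eff^n(X\times X))}$ and $\alpha$ is effective is false already for $\psi=\Phi_k(f)$ with $f$ effective: pullback by an effective correspondence need not send effective classes to pseudoeffective ones. Concretely, let $X$ be a smooth projective surface containing a $(-1)$-curve $C$, and set $f=C\times C\in\Eff^2(X\times X)$ and $\alpha=[C]$. Then for any divisor $D$ on $X$,
\[
f^*C\cdot D \;=\; (C\times C)\cdot\pr_1^*D\cdot\pr_2^*C \;=\; (C\cdot D)(C\cdot C) \;=\; -(C\cdot D),
\]
so $f^*C\equiv -C\notin\PsEff^1(X)$. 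With this gone, nothing in your argument forces $\psi(\alpha)\cdot H_X^{n-k}=0$ for $\alpha\neq H_X^k$, and the compactness step collapses. (The preliminary dimension count, even if its converse direction could be justified, only addresses the cone before closure and cannot repair this.)

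The paper's route is different and bypasses any positivity claim for $f^*\alpha$. It notes that $\Phi_k(f)=0$ amounts to $f\cdot(Z_1\times Z_2)=0$ for all effective $Z_1\in\Z^{n-k}(X)$, $Z_2\in\Z^k(X)$, and then invokes \cref{lemma:boundedness}: for any two effective correspondences $f,g$ of $X$ one has $|f\cdot g|\le C\sum_i\deg_i(f)\deg_{n-i}(g)$. Applied with $g=Z_1\times Z_2$ (for which only the term $i=k$ survives), this gives $|f^{\pm}\cdot(Z_1\times Z_2)|\le C\deg_k(f^{\pm})\deg(Z_1)\deg(Z_2)$; taking the infimum over decompositions $f=f^+-f^-+g$ with $f^{\pm}$ effective and $g\in\Ker(\Phi_k)$ then yields $f\cdot(Z_1\times Z_2)=0$ whenever $\|f^*|_{\Nk}\|_1=0$. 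The crucial input, \cref{lemma:boundedness}, is proved via Fulger--Lehmann's pliant cone: one arranges $C_1H_{X\times X}^n\pm\Delta_X\in\PL^n(X\times X)$, and since pliancy is stable under pullback and product, the resulting class on $(X\times X)^2$ is nef and pairs nonnegatively with the effective $\tau_*(f\times g)$. This supplies precisely the uniform intersection estimate your cone argument was attempting to reach by softer means.
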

\begin{proof}
First, it is easy to check that this assignment is nonnegative, homogeneous, and satisfies the triangle inequality.

We only need to check that for any real correspondence $f$ of $X$ this assignment attains zero only if $f^*|_{\Nk}$ is the zero map, i.e., $f\in \Ker(\Phi_k)$.
To this end, by the definition of the pullback action of correspondences on $\Nk$, it suffices to show that if $\|f^*|_{\Nk}\|_1=0$, then $f \cdot (Z_1 \times Z_2) = f \cdot \pr_1^*Z_1 \cdot \pr_2^*Z_2 = Z_1 \cdot f^*Z_2 = 0$ for any effective cycles $Z_1$ and $Z_2$ on $X$ of codimension $n-k$ and $k$, respectively.
Write $W = Z_1\times Z_2$.
For any decomposition $f = f^+ - f^- + g$, where $f^\pm\in \Z^n(X\times X)_\bR$ are effective and $g\in \Ker(\Phi_k)$, we have
\[
|f \cdot W| \le |f^+ \cdot W| + |f^- \cdot W| + |g \cdot W| = |f^+ \cdot W + |f^- \cdot W|.
\]
Now, applying \cref{lemma:boundedness} (proved independently later) to $f^\pm$ and $W = Z_1\times Z_2$, we get that
\[
|f^+ \cdot W| + |f^- \cdot W| \le C \deg_{n-k}(W) \deg_k(f^+ + f^-).
\]
Taking infimum over all decompositions of $f$, we see that $|f\cdot W| = 0$ and hence the lemma follows.
\end{proof}

\begin{remark}
\label{rmk:norm-eff-corr}
Note that for any effective correspondence $f$ of $X$, we have
\[
\big\|{f^*|_{\Nk}}\big\|_1 = \deg_k(f).
\]
First, it is easy to see that $\|f^*|_{\Nk}\|_1 \le \deg_k(f)$.
For any effective correspondences $f^\pm$ such that $\Phi_k(f - f^+ + f^-) = 0$, or equivalently, $f - f^+ + f^- = g \in \Ker(\Phi_k)$, we have
\[
\deg_k(f^+ + f^-) \ge \deg_k(f^+) = \deg_k(f^+ + g) = \deg_k(f + f^-) \ge \deg_k(f).
\]
Taking infimum, we get that $\|f^*|_{\Nk}\|_1 \ge \deg_k(f)$, hence equalities.
\end{remark}

Likewise, we may endow the cohomology groups $H^i(X)$ with the dual norm $\norm{\cdot}'_{\iota}$ (with respect to the field isomorphism $\iota\colon \ol\bQ_\ell \isom \bC$): for any $\alpha\in H^i(X)$,
\begin{equation}
\label{eq:dual-norm-coh}
\|\alpha\|'_{\iota} \coloneqq \sup \big\{ |\alpha \cup \beta|_{\iota} : \beta\in H^{2n-i}(X), \, \norm{\beta}_{\iota} = 1 \big\}.
\end{equation}
Then the induced operator norm of the pullback $f^*$ of a correspondence $f$ on $H^i(X)$ is given by
\begin{align}
\label{eq:operator-dual-norm-coh}
\begin{split}
\big\|f^*|_{H^{i}(X)}\big\|'_{\iota} &\coloneqq \sup \big\{ \|f^*\alpha\|'_{\iota} : \alpha\in H^{i}(X), \, \|\alpha\|'_{\iota} = 1 \big\} \\
&=\sup \left\{
|f^*\alpha \cup \beta|_{\iota} \ :
\begin{array}{l}
\alpha\in H^{i}(X), \, \beta\in H^{2n-i}(X), \\ \|\alpha\|'_{\iota} = \|\beta\|_{\iota} = 1
\end{array}
\right\}.
\end{split}
\end{align}

\subsection{Boundedness of self-intersections of correspondences}
\label{subsec:boundedness}

In this subsection, we prove a lemma bounding the intersection of two effective correspondences $f$, $g$ of $X$ in terms of all $i$-th degrees of $f$ and $g$ (with respect to a fixed ample divisor $H_X$).
The strict intersection analog has been proved in \cite[Lemma~4.1]{Truong20}. 
Here we adopt a similar idea to the case of the usual intersection, which is easier to deal with.

\begin{lemma}
\label{lemma:boundedness}
Let $X$ be a smooth projective variety of dimension $n$ over $\bk$. Then there exists a constant $C>0$ such that for any two effective correspondences $f$, $g$ of $X$, we have
\[
|f \cdot g| \le C \sum_{i=0}^{n} \deg_i(f)\deg_{n-i}(g).
\]
\end{lemma}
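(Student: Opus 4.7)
The plan is to follow the strategy of \cite[Lemma~4.1]{Truong20} for the strict-intersection analog, adapted to the (simpler) ordinary intersection. By bilinearity of the intersection product, one can reduce to the case where $f$ and $g$ are both irreducible effective correspondences, i.e., reduced and irreducible closed subvarieties of $X\times X$ of codimension $n$.

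The key ingredient is a positive bidegree decomposition on the ambient product of projective spaces. Fix $m\ge 1$ such that $mH_X$ is very ample, giving an embedding $i\colon X\injmap \bbP^N$; let $j \coloneqq i\times i\colon X\times X \injmap \bbP^N\times\bbP^N$, and denote by $H_k$, $H_k^X$ the pullback of the hyperplane class via the $k$-th projection to $\bbP^N\times\bbP^N$ and $X\times X$ respectively. Then $j^*H_k = mH_k^X$ for $k=1,2$. The group $\N^{2N-n}(\bbP^N\times\bbP^N)_\bR$ is $(n+1)$-dimensional with basis $\{H_1^{N-i}H_2^{N-n+i}\}_{i=0}^n$, and its effective cone is the simplicial cone of nonnegative combinations of this basis (pairing against the dual classes $H_1^iH_2^{n-i}$ shows the coefficients are nonnegative for any effective class). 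Hence $j_*f \equiv \sum_{i=0}^n c_i(f)\, H_1^{N-i}H_2^{N-n+i}$ with $c_i(f)\ge 0$; by the projection formula,
\[
c_i(f) \;=\; j_*f \cdot H_1^iH_2^{n-i} \;=\; f \cdot j^*(H_1^iH_2^{n-i}) \;=\; m^n \deg_{n-i}(f),
\]
and analogously $c_i(g) = m^n\deg_{n-i}(g)$.

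The main step is to bound $|f\cdot g|$ in terms of these bidegree coefficients. Using Chow's moving lemma on the smooth variety $X\times X$, move $g$ within its numerical class into proper intersection with $f$; then $f\cdot g$ is the degree of a $0$-cycle supported at finitely many points of $X\times X$. A refined Bezout-type estimate, guided by the embedding $X\times X\subset \bbP^N\times\bbP^N$, then yields
\[
|f\cdot g| \;\le\; C_0\, m^{-2n}\sum_{i=0}^n c_i(f)\, c_{n-i}(g)
\]
for a constant $C_0$ depending only on $X$, $H_X$, and the embedding datum. Substituting $c_i = m^n\deg_{n-i}$ and reindexing gives the desired inequality with $C = C_0$.

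The main obstacle lies in the refined Bezout step. A naive Bezout on $\bbP^N\times\bbP^N$ fails because $j_*f\cdot j_*g = 0$ there for dimension reasons (since $\dim j_*f + \dim j_*g = 2n < 2N$ when $N > n$), so the intersection number on $X\times X$ is not recoverable directly from the ambient intersection of the push-forwards. The remedy requires a refined intersection-theoretic device — either an excess-intersection argument exploiting the normal bundle $N_{X\times X/\bbP^N\times\bbP^N}$, or a hyperplane-cutting and multiplicity-counting argument along the lines of \cite[Lemma~4.1]{Truong20}. Crucially, the \emph{cross} form of the final bound (with $\deg_i(f)\deg_{n-i}(g)$ rather than $\deg_i(f)\deg_j(g)$ for arbitrary $i,j$) is ultimately forced by the anti-diagonal pairing $H_1^{n-i}H_2^i \cdot H_1^{n-j}H_2^j = \delta_{i+j,\,n}\, (H_X^n)^2$ on $X\times X$, which makes only the $j=n-i$ contributions persist after the reduction.
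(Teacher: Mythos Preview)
Your proposal has a genuine gap at exactly the point you flag. The ``refined Bezout step'' is the entire content of the lemma, and you correctly observe that the naive approach fails (since $j_*f \cdot j_*g = 0$ in $\bbP^N\times\bbP^N$ for dimension reasons), but you then only gesture at possible remedies---excess intersection via the normal bundle, or a hyperplane-cutting argument---without carrying either out. The excess-intersection route would require controlling Segre/Chern classes of $N_{X\times X/\bbP^N\times\bbP^N}$ against the possibly singular cycles $f$, $g$, and it is not clear this produces the specific cross-term bound $\sum_i \deg_i(f)\deg_{n-i}(g)$ rather than a coarser $\deg(f)\deg(g)$. The hyperplane-cutting argument you cite is tailored to \emph{strict} intersections (counting points with multiplicity one); the ordinary intersection number $f\cdot g$ can be negative, so a point-counting bound does not transfer directly. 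As written, the proof is a sketch with its central estimate unproved.

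The paper's argument avoids these difficulties by a different reduction. Instead of embedding into $\bbP^N\times\bbP^N$, it reduces to the diagonal in the fourfold product $Y = X^4$: one has $f\cdot g = (f\times g)\cdot \Delta_{X\times X}$, and after the involution $\tau$ swapping the middle two factors, $\tau(\Delta_{X\times X}) = \Delta_X\times\Delta_X$. The positivity input is then purely numerical: since $H_{X\times X}^n$ lies in the interior of the \emph{pliant cone} of Fulton--Lehmann \cite{FL17a}, one can choose $C_1>0$ with $C_1 H_{X\times X}^n \pm \Delta_X$ pliant; pliant classes are stable under pullback and product, so $p_{12}^*(C_1 H_{X\times X}^n \pm \Delta_X)\cdot p_{34}^*(C_1 H_{X\times X}^n \pm \Delta_X)$ is nef on $Y$. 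Pairing against the effective class $\tau_*(f\times g)$ and expanding via the projection formula yields the bound directly, with the cross-term structure $\deg_i(f)\deg_{n-i}(g)$ arising from the combinatorics of how $\tau$ permutes the projections. No Bezout, no moving lemma, no excess intersection.
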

\begin{proof}
Let $f$, $g$ be effective correspondences of $X$.
By reduction to the diagonal, the intersection number $f\cdot g$ taken over $X\times X$ is equal to $(f\times g) \cdot \Delta_{X\times X}$, where $\Delta_{X\times X} \coloneqq\{(x_1,x_2,x_1,x_2) : x_1,x_2 \in X\} \subset X\times X\times X\times X$ is the diagonal of $X\times X$.
We define an isomorphism $\tau$ by interchanging the second and the third factors of the fourfold product $Y \coloneqq X\times X\times X\times X$.
Now clearly, $\tau(\Delta_{X\times X}) = \Delta_X \times \Delta_X$, where $\Delta_{X} \coloneqq\{(x,x) : x \in X\} \subset X\times X$ is the diagonal of $X$.
Moreover, using the projection formula, we have
\[
(f\times g) \cdot \Delta_{X\times X} = (f\times g) \cdot \tau^*(\Delta_X \times \Delta_X) = \tau_*(f\times g) \cdot (\Delta_X \times \Delta_X).
\]
For $1\le i \neq j\le 4$, let $p_i$ and $p_{ij}$ denote the natural projections from $X\times X\times X\times X$ to the appropriate $X$ and $X\times X$, respectively.
Then clearly, we have $p_{5-k}\circ \tau = p_{k}$ for $k=2$, $3$, $p_k\circ \tau = p_k$ for $k=1$, $4$, $p_{12} \circ \tau = p_{13}$, and $p_{34} \circ \tau = p_{24}$.

We still denote the natural projections from $X\times X$ to $X$ by $\pr_1$ and $\pr_2$, and let $H_{X\times X} = \pr_1^*H_X + \pr_2^*H_X$ denote the ample divisor on $X\times X$.
In particular, we have $p_1 = \pr_1 \circ p_{12}$, $p_2 = \pr_2 \circ p_{12}$, and so on; on $Y$ we have $p_{ij}^*H_{X\times X} = p_i^*H_X + p_j^*H_X$.
\begin{claim}
\label{claim:boundedness}
There exists a constant $C_1>0$ so that the numerical classes of 
\[
p_{12}^*(C_1 H_{X\times X}^n \pm \Delta_X) \cdot p_{34}^*(C_1 H_{X\times X}^n \pm \Delta_X)
\]
are in the nef cone $\Nef^{2n}(Y)$, which is defined as the closed convex cone in $\N^{2n}(Y)_\bR$ dual to the pseudoeffective cone $\PsEff_{2n}(Y)$; see \cite[\S 2.3]{FL17a}.
\end{claim}

\begin{proof}[Proof of \cref{claim:boundedness}]\renewcommand{\qedsymbol}{}
Note that $H_{X\times X}$ is ample on $X\times X$.
It follows from \cite[Lemma~3.14]{FL17a} that $H_{X\times X}^n$ is in the interior of the so-called pliant cone $\PL^n(X\times X) \subset \N^n(X\times X)_\bR$, which is a closed convex cone contained in the nef cone $\Nef^n(X\times X)$ but behaves better.
We refer to \cite[\S 3]{FL17a} for details.
Thus, there is a constant $C_1>0$ such that $C_1 H_{X\times X}^n \pm \Delta_X$ are still in the interior of $\PL^n(X\times X)$.
Since pliant cones are preserved by pullbacks of morphisms and by intersection products (see \cite[Remarks~3.4 and 3.6]{FL17a}), we get that the numerical classes of $p_{12}^*(C_1 H_{X\times X}^n \pm \Delta_X) \cdot p_{34}^*(C_1 H_{X\times X}^n \pm \Delta_X)$ are in the pliant cone $\PL^{2n}(Y)$ of $Y$; in particular, they lie in the nef cone $\Nef^{2n}(Y)$ of $Y$.
We hence prove the claim.
\end{proof}

Using the above \cref{claim:boundedness}, it is not hard to deduce that
\[
C_1^2 \, p_{12}^*H_{X\times X}^n \cdot p_{34}^*H_{X\times X}^n \pm p_{12}^*\Delta_X \cdot p_{34}^*\Delta_X \in \Nef^{2n}(Y).
\]
Now, as $\tau_*(f\times g)$ is effective and nef cones are dual to pseudoeffective cones, we obtain that
\begin{align*}
&\quad \ \ |\tau_*(f\times g) \cdot (\Delta_X \times \Delta_X)| \le C_1^2 \, \tau_*(f\times g) \cdot p_{12}^*H_{X\times X}^n \cdot p_{34}^*H_{X\times X}^n \\
&= C_1^2 \, (f\times g) \cdot \tau^*p_{12}^*H_{X\times X}^n \cdot \tau^*p_{34}^*H_{X\times X}^n = C_1^2 \, (f\times g) \cdot p_{13}^*H_{X\times X}^n \cdot p_{24}^*H_{X\times X}^n \\
&= C_1^2 \ p_{12}^*f \cdot p_{34}^*g \cdot (p_1^*H_X + p_3^*H_X)^n \cdot (p_2^*H_X + p_4^*H_X)^n \\
&\le C_1^2 \, 2^{2n} \sum_{0\le i, \, j\le n} (p_{12}^*f \cdot p_1^*H_X^{n-i} \cdot p_2^*H_X^{j}) \cdot (p_{34}^*g \cdot p_3^*H_X^{i} \cdot p_4^*H_X^{n-j}) \\
&= C_1^2 \, 2^{2n} \sum_{0\le i, \, j\le n} (f \cdot \pr_1^*H_X^{n-i} \cdot \pr_2^*H_X^{j}) \cdot (g \cdot \pr_1^*H_X^{i} \cdot \pr_2^*H_X^{n-j}) \\
&= C_1^2 \, 2^{2n} \sum_{0\le i \le n} (f \cdot \pr_1^*H_X^{n-i} \cdot \pr_2^*H_X^{i}) \cdot (g \cdot \pr_1^*H_X^{i} \cdot \pr_2^*H_X^{n-i}) \\
&= C_1^2 \, 2^{2n} \sum_{0\le i \le n} \deg_i(f)\deg_{n-i}(g),
\end{align*}
where the second and the fifth lines follow from the projection formula, and the second last line follows from the fact that in order to make the intersection meaningful one has to require $n-i+j\le n$ and $n-j+i\le n$, i.e., $i=j$.
We hence prove \cref{lemma:boundedness}.
\end{proof}

\begin{remark}
\begin{enumerate}[(1)]
\item In the case $\bk = \bC$, a different proof, for the strict intersection of positive closed currents - using regularization of positive closed currents - was given in \cite{DN11} for smooth complex projective varieties and \cite{DNT12} for compact K\"ahler manifolds.

\item When $n=1$ and $f=g$ is an effective $(a,b)$-correspondence of the curve $X$, the above inequality reads as $|f\cdot f| \le 2Cab$.
On the other hand, by the Castelnuovo--Severi inequality (see, e.g., \cite[Example~16.1.10]{Fulton98}), we know that for any $(a,b)$-correspondence $f$ of a curve, $f\cdot f \le 2ab$.
Whether the self-intersection $f\cdot f$ has a uniform lower bound is called the Bounded Negativity Conjecture (see, e.g., \cite{BNC13}).
So \cref{lemma:boundedness} actually provides certain boundedness for negative self-intersections of effective divisors on surfaces of product type, which might be of its independent interest.
\end{enumerate}
\end{remark}

\section{Dynamical correspondences and effective finite correspondences}
\label{section:dyn-corr}

In this section, we first introduce the notion of dynamical correspondences initially studied by Dinh and Sibony \cite{DS06b}.
We collect some basic materials on dynamical correspondences, including their definition, dynamical compositions, dynamical pullbacks under dominant rational maps, and a fundamental result concerning the numerical dynamical degrees $\lambda_k$ of dynamical correspondences.
We then discuss dynamical aspects of effective finite correspondences, which already have their own significance in constructions of various triangulated categories of (effective) motives over $\bk$.

\subsection{Dynamical correspondences}
\label{subsec:dyn-corr}

Dynamical correspondences are natural generalizations of dominant rational maps that have been extensively studied for decades.
In this subsection, we discuss their basic properties.
We refer to \cite{DS06b,DS08,Truong20} for details.

\begin{definition}[Dynamical correspondence]
\label{def:dyn-corr}
Let $X$ and $Y$ be smooth projective varieties of dimension $n$ over $\bk$.
A correspondence $f\in \Z^n(X\times Y)_\bQ$ is {\it dominant}, if for each irreducible component $f_i$ of $f$, the natural restriction maps $\pr_1|_{f_i} \colon f_i \to X$ and $\pr_2|_{f_i} \colon f_i \to Y$ induced from the projections $\pr_1 \colon X\times Y \to X$ and $\pr_2 \colon X\times Y \to Y$, respectively, are both surjective (and hence generically finite).
We say that $f$ is a {\it dynamical correspondence}, if $f$ is both effective and dominant.
\end{definition}

Clearly, the graph $\Gamma_\pi$ of a dominant rational map $\pi\colon X\ratmap Y$ is a dynamical correspondence.
We can compose dynamical correspondences just like how we compose dominant rational maps (see \cite[\S 3.2]{DS06b}, \cite[\S 3]{DS08}, or \cite[\S 3.1]{Truong20}).

\begin{definition}[Dynamical composition]
\label{def:dyn-comp}
Let $f \in \Z^n(X\times Y)_\bQ$ and $g\in \Z^n(Y\times Z)_\bQ$ be two dynamical correspondences.
Assume that they are both irreducible for simplicity since the general case can be defined by linearity.
By generic flatness (see, e.g., \cite[\href{https://stacks.math.columbia.edu/tag/052A}{Proposition~052A}]{stacks-project}), there are nonempty Zariski open subsets $U\subseteq X$ and $V\subseteq Y$ such that the restriction maps $\pr_1|_f \colon f \to X$ and $\pr_1|_g \colon g \to Y$ are finite and flat over $U$ and $V$, respectively.
By shrinking $U$ (if necessary), we may assume that the strict image
\[
f(U) \coloneqq \pr_2(f \cap \pr_1^{-1}(U))
\]
of $U$ under $f$ is contained in $V$.
It follows that for any point $x\in U$, the strict image $f(x)$ of $x$ under $f$ is a finite subset of $V$, whose strict image under $g$ is still finite.
The {\it dynamical composition} $g\dyncomp f$ is then defined as the Zariski closure
\[
\ol{\{(x, g(f(x))) \in U \times Z : x\in U\}}
\]
of the composite graph in $X\times Z$.
Alternatively, let $f_0$ (resp. $g_0$) be a nonempty Zariski open subset of $f$ (resp. $g$) which is finite over some open $U\subseteq X$ (resp. open $V\subseteq Y$).
Then $g\dyncomp f$ is the pushforward of the scheme-theoretic closure of the scheme-theoretic intersection $(f_0\times Z)\cap (U\times g_0) \subset U\times V\times Z$ in $X\times Y\times Z$ under $\pr_{13}\colon X\times Y\times Z\to X\times Z$.
One can check that the resulting correspondence is a dynamical correspondence, independent of the choices of $f_0$ and $g_0$.
\end{definition} 

Intuitively, $g\dyncomp f$ is the same as $\pr_{13,*}((f \times Z) \cap (X\times g))$ with components of dimension greater than $n$ and components whose projections to the factors of $X\times Z$ are not surjective removed. Here we use $\cap $ for the scheme-theoretic intersection.

For a dynamical correspondence $f$ of $X$ and any $m\in \bN_{>0}$, we denote by $f^{\Diamond m}$ the {\it $m$-th dynamical iterate} of $f$, which is still a dynamical correspondence of $X$.
Unlike the usual composition defined in \Cref{subsec:comp}, this dynamical composition is, in general, not functorial in the sense that $(g\dyncomp f)^* \neq f^* \circ g^*$. For example, let $f([x:y:z])=[yz:xz:xy]$ be the standard Cremona map of $X=\mathbb{P}_\bk^2$. Then $f^*$ acts on $H^2(X)$ as multiplication by $2$, while $f^{\Diamond 2}=\id_X$. Hence $\id = (f^{\Diamond 2})^* \neq (f^*)^2 = \times 2^2$ on $H^2(X)$.

\begin{remark}
Compositions of correspondences are functorial but do not necessarily preserve effectiveness.
Dynamical compositions of dynamical correspondences preserve effectiveness but may not be functorial, which leads to a notion called algebraic stability (see below).
The non-functorial nature of dynamical compositions or iterates makes the computation of dynamical degrees very hard.
For instance, the following natural question has been well-known for decades: are dynamical degrees always algebraic numbers?
Recently, a negative answer to this question is given by Bell--Diller--Jonsson \cite{BDJ20}.
\end{remark}

\begin{definition}[Algebraic stability]
\label{def:alg-stable}
Let $f\in \Z^n(X\times X)_\bQ$ be a dynamical correspondence of $X$.
Fix an integer $0\le k\le n$.
We say that $f$ is {\it algebraically $k$-stable}, if for all $m\in \bN_{>0}$,
\[
(f^{\Diamond m})^*|_{H^{2k}(X)} = (f^*)^m|_{H^{2k}(X)} = (f^{\circ m})^*|_{H^{2k}(X)}.
\]
It is called {\it algebraically stable}, if for all $m\in \bN_{>0}$,
\[
(f^{\Diamond m})^*|_{H^{\bullet}(X)} = (f^*)^m|_{H^{\bullet}(X)} = (f^{\circ m})^*|_{H^{\bullet}(X)}.
\]
\end{definition}

Clearly, if $f$ is (the graph of) a self-morphism of $X$, then it is algebraically $k$-stable for all $k$.
The study of algebraic stability has attracted a lot of attention \cite{DF01,Truong13,DL16,DF16}.
Here we collect some examples.

\begin{example}
\begin{enumerate}[(1)]
\item Let $f = \sum_i a_i \Gamma_i$ be a positive combination of the graphs $\Gamma_i$ of self-morphisms of $X$ with $a_i\in \bQ_{>0}$.
Then it is also algebraically $k$-stable for all $k$.
Indeed, one can check that the $m$-th dynamical iterates $f^{\Diamond m}$ are the same as the $m$-th iterates $f^{\circ m}$.

\item Another interesting example is due to Voisin \cite{Voisin04} and studied by Amerik \cite{Amerik09} later.
For a smooth cubic fourfold $V \subset \bbP^5_\bC$, let $X = F(V)$ be the variety of lines on $V$.
Then $X\subset \mathbb{G}(1,5)$ is known to be a hyper-K\"ahler $4$-fold.
There is a rational self-map $f$ of $X$ defined by sending a general line $l\subset V$ to the line $l'$, where $l'\cup l = P_l \cap V$ and $P_l$ is the unique plane in $\bbP^5_\bC$ tangent to $V$ along $l$.
Amerik showed that the indeterminacy locus of $f$ is a smooth surface and $f$ is algebraically stable.
\end{enumerate}
\end{example}

For the convenience of the reader, we recall a fundamental result on the existence of the numerical dynamical degrees $\lambda_k$ of dynamical correspondences due to the second author \cite{Truong20}.

\begin{theorem}[{cf.~\cite[Theorem~1.1(1)]{Truong20}}]
\label{thm:lambda-k-exists}
Let $X$ be a smooth projective variety of dimension $n$ over $\bk$.
Let $f$ be a dynamical correspondence of $X$.
Then for any $0\le k\le n$, the limit
\[
\lim_{m\to \infty} \big\|{(f^{\Diamond m})^*|_{\Nk}}\big\|^{1/m}
\]
defining the $k$-th numerical dynamical degree $\lambda_k(f)$ of $f$ exists and is equal to
\[
\lim_{m\to \infty} (\deg_k(f^{\Diamond m}))^{1/m} = \lim_{m\to \infty} ((f^{\Diamond m})^*H_X^k \cdot H_X^{n-k})^{1/m},
\]
where $H_X$ is an (arbitrary) ample divisor on $X$.
\end{theorem}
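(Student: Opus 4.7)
The plan is to reduce, via a judicious choice of norm, the existence of the limit in \eqref{eq:lambda} to the existence of $\lim_{m\to\infty} \deg_k(f^{\Diamond m})^{1/m}$, and then to prove the latter by a submultiplicativity estimate combined with Fekete's lemma. Since all norms on the finite-dimensional real vector space $\End_\bR(\N^k(X)_\bR)$ are equivalent, the existence and value of the limit in \eqref{eq:lambda} are independent of the norm. I would therefore adopt $\norm{\cdot}_1$ from \eqref{eq:norm-1}. By \cref{rmk:norm-eff-corr}, every effective correspondence $g$ satisfies $\|g^*|_{\N^k(X)_\bR}\|_1 = \deg_k(g)$. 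Since $f^{\Diamond m}$ is itself a dynamical correspondence, and hence effective, one has $\|(f^{\Diamond m})^*|_{\N^k(X)_\bR}\|_1 = \deg_k(f^{\Diamond m}) = (f^{\Diamond m})^*H_X^k\cdot H_X^{n-k}$, so proving convergence of $\deg_k(f^{\Diamond m})^{1/m}$ simultaneously establishes both displayed equalities in the statement.

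To prove the convergence of $a_m \coloneqq \deg_k(f^{\Diamond m})$, I would establish a uniform submultiplicativity inequality: there exists a constant $C = C(X, H_X) > 0$, independent of the correspondences, such that $\deg_k(g\dyncomp h) \le C \deg_k(g)\deg_k(h)$ for any pair of dynamical correspondences $g, h$ of $X$. Writing $p_i$ and $p_{ij}$ for the various projections of $X^3$, the cycle $g \dyncomp h$ is, by \cref{def:dyn-comp}, dominated by the effective cycle $p_{13,*}(p_{12}^* h \cdot p_{23}^* g)$ (the dynamical composition merely discards non-dominant excess components). The projection formula therefore gives
\[
\deg_k(g \dyncomp h) \;\le\; p_{12}^* h \cdot p_{23}^* g \cdot p_1^* H_X^{n-k} \cdot p_3^* H_X^k,
\]
an intersection number on $X^3$ of two effective $n$-cycles with a nef class of codimension $n$. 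Exactly as in the proof of \cref{claim:boundedness}, a pliantness/positivity argument on $X^3$ allows one to dominate the nef class $p_1^* H_X^{n-k}\cdot p_3^* H_X^k$, modulo pseudoeffectivity, by a positive combination of products of the form $p_{12}^*(\pr_1^*H_X^{a_1}\cdot\pr_2^*H_X^{a_2}) \cdot p_{23}^*(\pr_1^*H_X^{b_1}\cdot\pr_2^*H_X^{b_2})$; re-applying the projection formula collapses each such term to a product $\deg_i(h)\deg_j(g)$ on $X\times X$, and one bounds each $\deg_i(\cdot)$ by a uniform constant multiple of $\deg_k(\cdot)$ using again pliantness of powers of $H_X$.

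With the submultiplicativity in hand, applied iteratively to $f^{\Diamond(m+\ell)} = f^{\Diamond m} \dyncomp f^{\Diamond \ell}$, the sequence $b_m \coloneqq \log(C a_m)$ is subadditive, and Fekete's lemma yields $\lim_{m\to\infty}(C a_m)^{1/m} = \inf_m (C a_m)^{1/m}$, hence the convergence of $a_m^{1/m}$ to the value $\lambda_k(f)$. The substantive difficulty lies in Step 2: dynamical composition is \emph{not} functorial on cycles or cohomology (as already highlighted in \cref{def:alg-stable}), so one cannot invoke submultiplicativity of operator norms and must instead produce the bound geometrically on $X^3$, using positivity of the polarization and effectivity of $g, h$ in a crucial way. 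The main delicate point is securing a constant $C$ that is truly uniform in both correspondences — this uniformity is precisely what powers Fekete's lemma and ultimately gives a canonical, birationally invariant dynamical degree.
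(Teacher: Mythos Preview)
The paper does not prove this theorem; it is quoted from \cite{Truong20} without argument, so there is no ``paper's proof'' to compare against. That said, your strategy --- reduce to $\deg_k$, prove submultiplicativity, invoke Fekete --- is the right shape, but two of the steps in your Step~2 do not go through as written.

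First, the sentence ``$g\dyncomp h$ is dominated by the effective cycle $p_{13,*}(p_{12}^*h\cdot p_{23}^*g)$'' is the crux and is not justified. The object $p_{12}^*h\cdot p_{23}^*g$ is a class in $\CH_n(X^3)$, not an honest effective cycle: Fulton's refined intersection product does \emph{not} preserve effectivity when the supports fail to meet properly, and for dynamical correspondences excess intersection over the non-flat locus is exactly what distinguishes $g\circ h$ from $g\dyncomp h$. So the inequality $\deg_k(g\dyncomp h)\le \deg_k(g\circ h)$ you are implicitly using can fail. This is precisely why the argument in \cite{Truong20} (as the paper remarks just before \cref{lemma:deg-k-invariant-under-gen-finite}) passes through a \emph{quantitative} Chow moving lemma: one replaces $h$ and $g$ by rationally equivalent cycles, with degrees controlled by a constant depending only on $(X,H_X)$, that now meet $p_{12}^{-1}(\cdot)$ and $p_{23}^{-1}(\cdot)$ properly, so that the intersection is genuinely effective and the domination inequality becomes meaningful. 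Your sketch has no substitute for this step.

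Second, the closing claim ``one bounds each $\deg_i(\cdot)$ by a uniform constant multiple of $\deg_k(\cdot)$ using pliantness'' is false: for the graph of a degree-$d$ self-morphism one has $\deg_0=1$ and $\deg_n=d$, so no uniform comparison between different $\deg_i$ exists. What actually happens after unwinding the projection formula on $X^3$ is that the middle factor does not decouple into a bare product $\deg_i(h)\deg_j(g)$; one instead lands on a pairing $h_*(H_X^{n-k})\cdot g^*(H_X^k)$ on the middle copy of $X$, and bounding this by $\deg_k(h)\deg_k(g)$ again requires a positivity input (a Siu-type inequality, or equivalently the moving-lemma bound) that you have not supplied.
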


We also recall dynamical pullbacks of dynamical correspondences by dominant rational maps, which will be used in the proof of \cref{lemma:Gr-Kummer}.

\begin{definition}[Dynamical pullback]
\label{def:dyn-pullback}
Let $\pi_i \colon X_i \ratmap Y_i$ be dominant rational maps of smooth projective varieties of dimension $n$ over $\bk$ for $i=1,2$.
Let $g$ be a dynamical correspondence from $Y_1$ to $Y_2$.
Assume that $g$ is also irreducible.
By generic flatness (see, e.g., \cite[\href{https://stacks.math.columbia.edu/tag/052A}{Proposition~052A}]{stacks-project}), there are nonempty Zariski open subsets $U_i\subseteq X_i$ and $V_i\subseteq Y_i$ such that $U_i = \pi_i^{-1}(V_i)$ and $\pi_i|_{U_i} \colon U_i \to V_i$ are finite and flat morphisms.
Then the {\it dynamical pullback} $(\pi_1\times \pi_2)^{\dynpb}(g)$ of $g$ under $(\pi_1,\pi_2)$ (or $\pi_1\times \pi_2$) is defined by the Zariski closure
\[
\ol{(\pi_1\times \pi_2|_{U_1\times U_2})^{-1}(g \cap (V_1\times V_2))}
\]
of the scheme-theoretic inverse image $(\pi_1\times \pi_2|_{U_1\times U_2})^{-1}(g \cap (V_1\times V_2))$ in $X_1\times X_2$.
It is indeed a dynamical correspondence from $X_1$ to $X_2$ (i.e., both of the induced projections to $X_i$ are surjective), by looking at the commutative diagram below:
\[
\begin{tikzcd}
& & {(\pi_1\times \pi_2)^{\dynpb}(g)} \arrow[ld, two heads, "\pr_1"'] \arrow[rd, two heads, "\pr_2"] \arrow[d, two heads, dashed] & & \\
U_1 \arrow[r, hook] \arrow[d, two heads, "\pi_1|_{U_1}"'] & {X_1} \arrow[d, two heads, dashed, "\pi_1"'] & {g} \arrow[ld, two heads, "\pr_1"'] \arrow[rd, two heads, "\pr_2"] & {X_2} \arrow[d, two heads, dashed, "\pi_2"] & U_2 \arrow[l, hook'] \arrow[d, two heads, "\pi_2|_{U_2}"] \\
V_1 \arrow[r, hook] & {Y_1} & & {Y_2} & V_2. \arrow[l, hook']
\end{tikzcd}
\]
The reducible case can be defined linearly.
\end{definition}

\begin{remark}
\label{rmk:dyn-pullback}
Consider the dominant rational map $\pi_1\times \pi_2\colon X_1\times X_2 \ratmap Y_1\times Y_2$.
Let $(\pi_1\times \pi_2)^{-1}(g)$ denote the total transform of $g$ under $\pi_1\times \pi_2$, i.e.,
\[
(\pi_1\times \pi_2)^{-1}(g) \coloneqq \pr_{12}(\Gamma_{\pi_1\times \pi_2} \cap \pr_{34}^{-1}(g)),
\]
where $\pi_{ij}$ denotes the projection from $X_1\times X_2 \times Y_1\times Y_2$ to the product of the $i$-th and $j$-th factors. Clearly, the difference between the total transform $(\pi_1\times \pi_2)^{-1}(g)$ and the dynamical pullback $(\pi_1\times \pi_2)^{\dynpb}(g)$ lies in the indeterminacy locus $I_{\pi_1\times \pi_2}$ and the non-flat locus $NF_{\pi_1\times \pi_2} \subset X_1\times X_2$ of $\pi_1\times \pi_2$.

If we assume further that both $\pi_i\colon X_i \to Y_i$ are morphisms, then one can check that the image of $(\pi_1\times \pi_2)^{\dynpb}(g)$ under $\pi_1\times \pi_2$ is exactly $g$.
Indeed, by definition, one easily has
\[
(\pi_1\times \pi_2)((\pi_1\times \pi_2)^{\dynpb}(g)) = \ol{g \cap (V_1\times V_2)} = g.
\]
It follows from the definition of proper pushforward that
\[
(\pi_1\times \pi_2)_*((\pi_1\times \pi_2)^{\dynpb}(g)) = \deg(\pi_1\times \pi_2) \, g.
\]
\end{remark}

\begin{example}
Let $\pi\colon \bbP_\bC^1 \rightarrow \bbP_\bC^1$ be the morphism defined by $\pi([z_1:z_2]) = [z_1^d,z_2^d]$ (hence of degree $d$).
Let $g$ be the graph of a polynomial map $p\colon \bbP_\bC^1\rightarrow \bbP_\bC^1$.
Then the dynamical pullback $f\coloneqq (\pi\times \pi)^{\dynpb}(g)$ is given by
\[
\{([z_1,z_2],[w_1,w_2])\in \bbP_\bC^1\times \bbP_\bC^1 : [w_1^d:w_2^d] = p([z_1^d:z_2^d])\},
\]
which is a $1$-to-$d$ correspondence (i.e., $\deg_0(f)=d$).
Note that
\[
(\pi\times \pi)^{\dynpb} (g^{\Diamond m})=\{([z_1,z_2],[w_1,w_2])\in \bbP_\bC^1\times \bbP_\bC^1 : [w_1^d:w_2^d] = p^{m}([z_1^d:z_2^d])\}
\]
is a $1$-to-$d$ correspondence and $f^{\Diamond m}$ is a $1$-to-$d^m$ correspondence.
One can check that
\[
f^{\Diamond m} = d^{m-1} (\pi\times \pi)^{\dynpb} (g^{\Diamond m}).
\]
Hence, since the degree of $\pi\times \pi$ is $d^2$, we have $(\pi\times \pi)_*(f^{\Diamond m})=d^{m+1} g^{\Diamond m}$.
\end{example}

In general, we have the following lemma for dynamical pullbacks of dynamical correspondences.

\begin{lemma}
\label{lamma:pushforward-composition-dyn-pullback}
Let $\pi\colon X\to Y$ be a generially finite surjective morphism of smooth projective varieties over $\bk$.
Let $g_1$, $g_2$ be dynamical correspondences of $Y$.
Let $f_i = (\pi\times \pi)^{\dynpb}(g_i)$ be the dynamical pullback of $g_i$ with $i=1,2$.
Then we have
\begin{align*}
\deg(\pi) (\pi\times \pi)^{\dynpb}(g_2\dyncomp g_1) &= f_2\dyncomp f_1, \\
(\pi\times \pi)_*(f_2\dyncomp f_1) &= \deg(\pi)^3 g_2\dyncomp g_1.
\end{align*}
\end{lemma}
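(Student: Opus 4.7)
The plan is to establish the first identity by a pointwise multiplicity computation over a Zariski dense open locus where $\pi$ becomes \'etale, and then to deduce the second identity formally by pushing forward. By linearity of both $\dyncomp$ and $\dynpb$ in each argument, I may assume $g_1,g_2$ are irreducible. I would apply generic flatness to $\pi$ and to the structure projections of each $g_i$ to choose a Zariski dense open $V\subseteq Y$ such that $\pi$ is \'etale of degree $d\coloneqq\deg(\pi)$ over $V$ and each $g_i$ is finite flat over $V$ via both projections with images landing in $V$; setting $U=\pi^{-1}(V)$, both cycles $f_2\dyncomp f_1$ and $(\pi\times\pi)^{\dynpb}(g_2\dyncomp g_1)$ are realized over $U\times U$ by honest scheme-theoretic operations without indeterminacy corrections.

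Next, I would check that the two cycles agree fiberwise on $U\times U$ up to a uniform factor of $d$. Their supports coincide: for a general $x\in U$ with $y=\pi(x)$, a point $x''$ appears over $x$ in $(\pi\times\pi)^{\dynpb}(g_2\dyncomp g_1)$ iff $\pi(x'')\in(g_2\dyncomp g_1)(y)=g_2(g_1(y))$, and the same characterization holds for $f_2\dyncomp f_1$, since any intermediate witness $x'$ must project to some $y'\in g_1(y)$, after which $(x',x'')\in f_2$ forces $\pi(x'')\in g_2(y')$, and conversely every such $\pi(x'')$ lifts. A direct pointwise count then gives that the multiplicity of $x''$ in the fiber of $f_2\dyncomp f_1$ over $x$ equals
\[
\sum_{y'\in g_1(y)} m_1(y')\cdot d\cdot m_2\bigl(y',\pi(x'')\bigr)\;=\;d\cdot\bigl(\text{multiplicity of }\pi(x'')\text{ in }(g_2\dyncomp g_1)(y)\bigr),
\]
where $m_i(\cdot)$ records the multiplicity along $g_i$ and the factor $d$ comes from the $d$ lifts $x'\in\pi^{-1}(y')$ available in the middle step. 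The corresponding multiplicity in $(\pi\times\pi)^{\dynpb}(g_2\dyncomp g_1)$ is exactly the multiplicity of $\pi(x'')$ in $(g_2\dyncomp g_1)(y)$, since the pullback is \'etale there. Taking Zariski closure then yields the first identity $f_2\dyncomp f_1=d\cdot(\pi\times\pi)^{\dynpb}(g_2\dyncomp g_1)$.

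The second identity follows formally. Applying $(\pi\times\pi)_*$ to the first identity and invoking \cref{rmk:dyn-pullback}, which gives $(\pi\times\pi)_*\bigl((\pi\times\pi)^{\dynpb}(h)\bigr)=\deg(\pi\times\pi)\cdot h=d^2 h$ for any dynamical correspondence $h$ of $Y$, I obtain $(\pi\times\pi)_*(f_2\dyncomp f_1)=d\cdot d^2\cdot(g_2\dyncomp g_1)=d^3(g_2\dyncomp g_1)$, as desired.

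The main obstacle is the multiplicity bookkeeping in the central step: one has to translate the combinatorial pointwise count on the \'etale locus into an honest equality of algebraic cycles on $X\times X$. This rests on the fact that over the chosen locus the scheme-theoretic constructions in \cref{def:dyn-comp,def:dyn-pullback} (scheme-theoretic intersection, strict image, closure) are generically reduced, so that \'etale preimage preserves multiplicities on the nose and the only new contribution is the factor $d$ coming from lifting the intermediate point $x'$. Verifying that the subsequent Zariski closures do not introduce spurious components outside $U\times U$—equivalently, that both cycles have the same irreducible decomposition on the boundary—is the last piece of bookkeeping to carry out.
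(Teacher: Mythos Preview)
Your argument is correct and follows essentially the same route as the paper: show that over a suitable open set the strict images of a general point under $f_2\dyncomp f_1$ and under $(\pi\times\pi)^{\dynpb}(g_2\dyncomp g_1)$ coincide (both equal $\pi^{-1}\circ g_2\circ g_1\circ \pi$ as multivalued maps), deduce that the two effective cycles are proportional, pin down the constant, and then get the second identity from the first via \cref{rmk:dyn-pullback}.

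The only noteworthy difference is in how the constant is determined. You track multiplicities pointwise and read off the factor $d$ directly from the $d$ choices of intermediate lift $x'\in\pi^{-1}(y')$. The paper instead, once proportionality $f_2\dyncomp f_1 = c\,(\pi\times\pi)^{\dynpb}(g_2\dyncomp g_1)$ is established, simply compares $\deg_0$ of both sides to find $c=\deg(\pi)$. The paper's shortcut is slicker and sidesteps exactly the bookkeeping you flag in your last paragraph: you never need to verify that the \'etale-locus multiplicity count globalizes component by component, because any global invariant such as $\deg_0$ already determines the scalar. Conversely, your pointwise computation is more explicit and makes the proportionality claim itself more transparent (the paper's ``clearly $f_2\dyncomp f_1 = c\cdot(\ldots)$'' tacitly assumes the multiplicity ratio is constant across components, which your count actually proves). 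As for your closing worry about spurious boundary components: since both cycles are dynamical correspondences, every irreducible component surjects onto $X$ via $\pr_1$ and hence meets $U\times X$, so no component can hide entirely in the complement of $U\times U$; this disposes of the last piece of bookkeeping.
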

\begin{proof}
The second one follows immediately from the first one and \cref{rmk:dyn-pullback}.
For the first one, note that by definition, there is an appropriate Zariski open subset $U_1\subseteq X$ such that for any $x\in U_1$, we have 
\begin{equation*}
\begin{gathered}
(\pi\times \pi)^{\dynpb}(g_2\dyncomp g_1)(x) = \pi^{-1}\circ g_2\circ g_1\circ \pi(x), \quad \text{and} \\
(f_2\dyncomp f_1)(x) = \pi^{-1}\circ g_2\circ \pi \circ \pi^{-1}\circ g_1\circ \pi(x) = \pi^{-1}\circ g_2\circ g_1\circ \pi(x),
\end{gathered}
\end{equation*}
where the left-hand sides denote the strict images of $x$ under correspondences.
Clearly, as correspondences (or rather, effective algebraic $n$-cycles on $X\times X$), we have $f_2\dyncomp f_1 = c \, (\pi\times \pi)^{\dynpb}(g_2\dyncomp g_1)$ for some constant $c>0$.
Comparing $\deg_0$ of both sides, one can see that $c = \deg(\pi)$.
\end{proof}

The next lemma has been implicitly proved by the second author in the more general setting of semi-conjugate correspondences.
The proof relies on a quantitative version of Chow's moving lemma (see, e.g., \cite[Lemma~2.4]{Truong20} or \cite{Roberts72}); as an important consequence, it is deduced that numerical dynamical degrees are birational invariants (see \cite[Lemma~5.5]{Truong20}).

\begin{lemma}[{cf.~\cite[Lemmas~3.4 and 5.4]{Truong20}}]
\label{lemma:deg-k-invariant-under-gen-finite}
Let $\pi\colon X\to Y$ be a generically finite surjective morphism of smooth projective varieties of dimension $n$ over $\bk$.
Let $g$ be a dynamical correspondence of $Y$, and $f\coloneqq (\pi\times \pi)^{\dynpb}(g)$ the dynamical pullback of $g$ under $\pi\times \pi$.
Then for any $0\le k\le n$, we have $\deg_k(f) \sim \deg_k(g)$, i.e., there is a constant $C>0$, independent of $g$ and $f$, such that
\[
C^{-1} \deg_k(g) \le \deg_k(f) \le C \deg_k(g).
\]
\end{lemma}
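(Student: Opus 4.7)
The key structural input is from \cref{rmk:dyn-pullback}: since $\pi$ is a morphism, $(\pi\times\pi)_*(f) = \deg(\pi)^2\, g$. Combined with the fact that $\pi^*H_Y$ is nef (pullback of ample by a surjection), so that $cH_X - \pi^*H_Y$ is ample for some $c>0$, this drives both directions.

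For the lower bound $\deg_k(g) \le C\,\deg_k(f)$, I apply the projection formula to the pushforward identity:
\begin{equation*}
\deg(\pi)^2\deg_k(g) \;=\; (\pi\times\pi)_*(f)\cdot\pr_1^*H_Y^{n-k}\cdot\pr_2^*H_Y^k \;=\; f\cdot\pr_1^*\pi^*H_Y^{n-k}\cdot\pr_2^*\pi^*H_Y^k.
\end{equation*}
Since $f$ is effective and $c\,\pr_i^*H_X - \pr_i^*\pi^*H_Y$ is nef, iteratively replacing each factor $\pr_i^*\pi^*H_Y$ by $c\,\pr_i^*H_X$ increases the intersection against $f$ (the difference cycle is pseudoeffective), yielding $f\cdot\pr_1^*\pi^*H_Y^{n-k}\cdot\pr_2^*\pi^*H_Y^k \le c^n \deg_k(f)$, which gives the lower bound.

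For the upper bound $\deg_k(f) \le C\,\deg_k(g)$ the reverse domination of $H_X$ by $\pi^*H_Y$ fails in general (e.g.\ along curves contracted by $\pi$), so I appeal to the quantitative Chow moving lemma \cite[Lemma~2.4]{Truong20} to represent $H_X^{n-k}$ and $H_X^k$ by effective cycles $Z_{n-k}$ and $Z_k$ on $X$ whose supports lie in the flat locus $U$ of $\pi$, with degrees comparable to $\deg(H_X^n)$. By the very construction in \cref{def:dyn-pullback}, over $U\times U$ the dynamical pullback $f$ agrees with the flat pullback $(\pi\times\pi)^*(g)$; since $\pr_1^*Z_{n-k}\cdot\pr_2^*Z_k$ is supported in $U\times U$, the projection formula together with the box-compatibility $(\pi\times\pi)_*(\pr_1^*Z_{n-k}\cdot\pr_2^*Z_k) = \pr_1^*\pi_*Z_{n-k}\cdot\pr_2^*\pi_*Z_k$ produces the key identity
\begin{equation*}
\deg_k(f) \;=\; g\cdot\pr_1^*\pi_*(Z_{n-k})\cdot\pr_2^*\pi_*(Z_k).
\end{equation*}
Then \cref{lemma:boundedness} applied to the effective correspondence $g$ of $Y$ and the effective product correspondence $\pi_*(Z_{n-k})\times\pi_*(Z_k)$ on $Y\times Y$ (whose only nonzero $i$-th degree occurs at $i=n-k$ and equals $\deg\pi_*Z_{n-k}\cdot\deg\pi_*Z_k$), combined with the projection-formula bound $\deg(\pi_*Z_j) = Z_j\cdot\pi^*H_Y^{n-j} \le c^{n-j}\deg(X)$, closes the argument with $C \sim c^n\deg(X)^2$.

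The main obstacle is the upper bound: since $f$ differs from the scheme-theoretic preimage $(\pi\times\pi)^{-1}(g)$ by components living in the non-flat locus, and since flat pullback is only defined on the flat locus, one must arrange the moved representatives $Z_j$ to avoid the non-flat locus so the identification $f = (\pi\times\pi)^*(g)$ applies in the intersection computation. The quantitative degree control provided by the moving lemma is precisely what guarantees that the constant $C$ can be chosen independent of $g$ and $f$.
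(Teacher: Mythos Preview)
Your approach is essentially the one the paper invokes: the lemma is not proved in the paper itself but referred to \cite[Lemmas~3.4 and 5.4]{Truong20}, with the remark that the proof ``relies on a quantitative version of Chow's moving lemma''; your upper-bound argument does exactly this, and your lower bound via $(\pi\times\pi)_*(f)=\deg(\pi)^2 g$ together with the nefness of $\pi^*H_Y$ is the natural easy direction. So the strategy matches.

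One point to tighten. For the upper bound you want effective representatives $Z_{n-k}, Z_k$ of $H_X^{n-k}, H_X^k$ supported in the flat locus $U$. The quantitative moving lemma in general only produces a difference $Z^+-Z^-$ of effective cycles with controlled degree; however, since $H_X$ is ample you may instead take general complete-intersection representatives, which are effective and avoid the closed non-flat locus, so the issue is harmless here --- but it is worth saying explicitly. Once $\pr_1^*Z_{n-k}\cdot\pr_2^*Z_k$ is supported in $U\times U$, the identification of $f$ with the flat pullback $(\pi\times\pi)^*g$ over $U\times U$ is exactly what \cref{def:dyn-pullback} provides, and the projection-formula identity $\deg_k(f)=g\cdot(\pi_*Z_{n-k}\times\pi_*Z_k)$ follows. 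Your application of \cref{lemma:boundedness} is then correct: the only nonvanishing term in $\sum_i \deg_i(g)\deg_{n-i}(\pi_*Z_{n-k}\times\pi_*Z_k)$ is at $i=k$, giving $\deg_k(f)\le C\deg_k(g)\deg(\pi_*Z_{n-k})\deg(\pi_*Z_k)$ with the last two factors uniformly bounded via $\pi^*H_Y\le cH_X$.
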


\begin{remark}
We note that in the above lemma, if $g$ is merely an effective correspondence of $Y$ but such that $f \coloneqq (\pi\times \pi)^*(g)$, the pullback of $g$ by $\pi\times \pi$ (viewed as a correspondence from $X\times X$ to $Y\times Y$), is also effective, then one can easily get that $\deg_k(f) \sim \deg_k(g)$.
In fact, by \cref{rmk:norm-eff-corr}, the $k$-th degree of $g$ (resp. $f$) is equal to the norm $\|g^*|_{\N^k(Y)_\bR}\|_1$ (resp. $\|f^*|_{\N^k(X)_\bR}\|_1$).
Also, the proper pushforward $\pi_* \colon \N^k(X)_\bR \to \N^k(Y)_\bR$ is surjectve because $\pi$ is generically finite surjective.
It follows that
\[
\N^k(X)_\bR \isom \N^k(Y)_\bR \oplus \Ker(\pi_*).
\]
Using this decomposition and the dual operator norm \eqref{eq:corr-operator-norm'}, one can see that the two dual operator norms $\|g^*|_{\N^k(Y)_\bR}\|'$ and $\|f^*|_{\N^k(X)_\bR}\|'$ are equivalent, so are $\deg_k(g)$ and $\deg_k(f)$.
\end{remark}

\subsection{Effective finite correspondences}
\label{subsec:finite-corr}

As discussed in \Cref{subsec:dyn-corr}, typical examples of dynamical correspondences are graphs of dominant rational maps (and surjective morphisms).
The assumption that dynamical correspondences are dominant is used to assure the existence of a well-defined dynamical composition of two dynamical correspondences given in \cref{def:dyn-comp}.
In the case when correspondences are given by graphs of morphisms, we do not need these morphisms to be surjective to compose them.
The key point is that the projection of the graph of a self-morphism to the first factor has all fibers finite (in fact, it is an isomorphism).
This latter property is shared by so-called finite correspondences.

\begin{definition}[Finite correspondence]
\label{def:finite-corr}
Let $X$, $Y$ be smooth projective varieties of dimension $n$ over $\bk$.
Let $f\in \Z^n(X\times Y)_\bQ$ be a correspondence.
We say that $f$ is a {\it finite correspondence}, if for each irreducible component $f_i$ of $f$, the restriction map $\pr_1|_{f_i}\colon f_i \to X$ induced from the first projection $\pr_1 \colon X\times Y \to X$ is finite and surjective.
\end{definition}

\begin{remark}
\label{rmk:finite-corr-comp}
Finite correspondences were introduced by Suslin and Voevodsky in constructions of various triangulated categories $\mathbf{DM}^{\mathrm{eff}}(\bk)$ of (effective) motives over $\bk$ (see \cite{MVW06}).
Typical examples include graphs of (not necessarily surjective) morphisms and transposes of graphs of finite surjective morphisms (see \cite[Example~1.2]{MVW06}).
An advantage of finite correspondences is that we do not need to mod any equivalence to speak of their compositions.

Precisely, let $f\in \Z^n(X\times Y)_\bQ$ and $g\in \Z^n(Y\times Z)_\bQ$ be two finite correspondences.
Without loss of generality, we may assume that both $f$ and $g$ are irreducible.
Then \cite[Lemma~1.7]{MVW06} asserts that $\pr_{12}^*f = f \times Z$ intersects $\pr_{23}^*g = X \times g$ properly and each component of the $\pr_{13}$-pushforward of $\pr_{12}^*f \cdot \pr_{23}^*g = \pr_{12}^*f \cap \pr_{23}^*g$ is finite and surjective over $X$.
Hence $g\circ f\in \Z^n(X\times Z)_\bQ$ is also a finite correspondence (note, however, that in \Cref{subsec:comp} one has to mod out at least rational equivalence when composing two general correspondences).
In particular, if $f$ and $g$ are effective finite correspondences, so is their composition $g\circ f$.
Therefore, our \cref{thm:B}\eqref{Assertion:B-2} applies to all effective finite correspondences.
\end{remark}

\begin{remark}
\label{rmk:finite-corr-alg-stable}
One can consider dynamical compositions of effective finite correspondences.
Precisely, let $f\in \Z^n(X\times Y)_\bQ$ and $g\in \Z^n(Y\times Z)_\bQ$ be two effective finite correspondences.
Then by \cref{def:dyn-comp} and the above \cref{rmk:finite-corr-comp}, the dynamical composition $g\dyncomp f$ coincides with the composition $g\circ f$, since the restriction maps $\pr_1|_f\colon f \to X$ and $\pr_1|_g\colon g \to Y$ are finite over $X$ and $Y$, respectively.
In particular, all effective finite correspondences are algebraically stable in the sense of \cref{def:alg-stable}, and hence we may speak of their dynamical degrees which are nothing but the spectral radii of the corresponding linear maps.
\end{remark}

\section{Proofs of main results}
\label{section:proof}

\subsection{Some auxiliary results}

The lemma below should be standard.
Using it, many of our comparison problems can be reduced to merely consider the (nontrivial) direction $\ge$.

\begin{lemma}
\label{lemma:easy-direction}
Let $f\in \Z^n(X\times X)_\bQ$ be an arbitrary correspondence of $X$.
Then there is a positive constant $C>0$, independent of $f$, such that for any $0\le k\le n$,
\[
\big\|f^*|_{\Nk}\big\| \le C \, \big\|f^*|_{H^{2k}(X)}\big\|_{\iota}.
\]
In particular, one has
\[
\rho(f^*|_{\Nk}) \le \rho(f^*|_{H^{2k}(X)})_{\iota}.
\]
\end{lemma}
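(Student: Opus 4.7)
The plan is to realize $\N^k(X)_\bR$ as an $f^*$-equivariant subquotient of the cohomology group $H^{2k}(X)$ (after extending scalars via $\iota$), and then invoke the equivalence of all norms on a finite-dimensional vector space. The constant $C$ will come purely from this equivalence and from the intrinsic geometry of $X$, hence will be independent of $f$.

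First I would record three $f$-independent structural facts. The action $f^*$ is well-defined on $\N^k(X)_\bR$, since numerical equivalence is preserved under correspondence pullback: for $\alpha \num 0$ and $\beta \in \N^{n-k}(X)_\bR$ one has $f^*\alpha \cdot \beta = \alpha \cdot f_*\beta = 0$. The algebraic subspace $H^{2k}_{\alg}(X) \subset H^{2k}(X)$ is $f^*$-invariant because the cycle class map intertwines the action of correspondences. Finally, since homological equivalence refines numerical equivalence, the cycle class map descends to a surjective, $f^*$-equivariant $\bQ_\ell$-linear map $\sigma\colon H^{2k}_{\alg}(X) \twoheadrightarrow \N^k(X)\otimes_{\bZ}\bQ_\ell$.

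For the operator norm inequality, I would first restrict $\norm{\cdot}_\iota$ to the $f^*$-invariant subspace $H^{2k}_{\alg}(X)$, giving $\norm{f^*|_{H^{2k}_{\alg}(X)}}_\iota \le \norm{f^*|_{H^{2k}(X)}}_\iota$. I would then extend scalars via $\iota$ so that source and target of $\sigma$ become complex vector spaces, equip $\N^k(X)\otimes_{\bZ}\bC$ with the quotient norm $\norm{\cdot}_q$ induced from the restricted norm on $H^{2k}_{\alg}(X)$, and apply the standard quotient-norm argument (using $f^*$-equivariance of $\sigma$) to obtain $\norm{f^*|_{\N^k(X)\otimes_{\bZ}\bC}}_q \le \norm{f^*|_{H^{2k}_{\alg}(X)}}_\iota$. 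Since all norms on the finite-dimensional space $\N^k(X)\otimes_{\bZ}\bC$ are equivalent, in particular the given norm $\norm{\cdot}$ (suitably extended from $\N^k(X)_\bR$) is equivalent to $\norm{\cdot}_q$: there is a constant $C>0$, depending only on $X$ and on the chosen norms, such that $\norm{\cdot}\le C\norm{\cdot}_q$. Combining the estimates yields the desired inequality.

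The spectral radius inequality is then a direct consequence: applying the just-proven operator norm inequality to the iterate $f^{\circ m}$, using the functoriality of the usual composition (\cref{prop:functorial}) so that $(f^{\circ m})^* = (f^*)^m$ both on $\N^k$ and on $H^{2k}(X)$, and extracting $m$-th roots as $m\to\infty$, the constant $C$ disappears in the limit and the spectral radius comparison follows. I do not anticipate any substantive obstacle; the only care required is bookkeeping across the base fields $\bR$, $\bQ_\ell$, and $\bC$, which is routine.
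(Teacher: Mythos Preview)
Your proposal is correct and rests on the same underlying fact as the paper's proof---namely, that the cycle class map intertwines the intersection product with the cup product and commutes with pullback by correspondences---but you package it differently. The paper argues concretely: it fixes cycles $Z_i, Z'_j$ whose numerical classes give bases of $\N^k(X)_\bR$ and $\N^{n-k}(X)_\bR$, fixes bases $\alpha_r,\alpha'_s$ of $H^{2k}(X)$ and $H^{2n-2k}(X)$, writes $\cl_X(Z_i)=\sum_r a_{i,r}\alpha_r$ and $\cl_X(Z'_j)=\sum_s a'_{j,s}\alpha'_s$, and bounds $|f^*Z_i\cdot Z'_j|=|f^*\cl_X(Z_i)\cup\cl_X(Z'_j)|_\iota$ by $C\max_{r,s}|f^*\alpha_r\cup\alpha'_s|_\iota$. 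Your subquotient-plus-quotient-norm argument is the abstract reformulation of exactly this computation. The paper's version is slightly more elementary in that it never needs to check that the $\bQ_\ell$-linear map $\sigma\colon H^{2k}_{\alg}(X)\to \N^k(X)_{\bQ_\ell}$ is well defined; your justification ``homological equivalence refines numerical equivalence'' literally only produces a $\bQ$-linear map on the image of $\cl_X$, and extending to the $\bQ_\ell$-span requires the (easy) observation that if $\sum_i c_i\cl_X(Z_i)=0$ in $H^{2k}(X)$ with $c_i\in\bQ_\ell$ then $\sum_i c_i(Z_i\cdot W)=(\sum_i c_i\cl_X(Z_i))\cup\cl_X(W)=0$ for every $W\in\Z^{n-k}(X)$, whence $\sum_i c_i[Z_i]=0$ in $\N^k(X)_{\bQ_\ell}$ by nondegeneracy of the numerical pairing. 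With that one line added, your argument is complete; the spectral-radius deduction via $f^{\circ m}$ and \cref{prop:functorial} is identical to the paper's.
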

\begin{proof}
Let $Z_1,\ldots,Z_m \in \Z^{k}(X)_\bQ$ and $Z'_1,\ldots,Z'_m \in \Z^{n-k}(X)_\bQ$ be such that their numerical classes form $\bR$-bases of $\N^{k}(X)_\bR$ and $\N^{n-k}(X)_\bR$, respectively.
Then it is easy to see that
\[
\max_{1\le i, \, j \le m} |f^*Z_i \cdot Z'_j|
\]
gives rise to a norm on $f^*|_{\Nk}$.
Similarly, let $\alpha_1,\ldots,\alpha_{b_{2k}}$ and $\alpha'_1,\ldots,\alpha'_{b_{2k}}$ be $\bQ_\ell$-bases of $H^{2k}(X)$ and $H^{2n-2k}(X)$, respectively.
Then we have a norm of $f^*|_{H^{2k}(X)}$ as follows:
\[
\big\|f^*|_{H^{2k}(X)}\big\|_{\iota} \coloneqq \max_{1\le r, \, s \le b_{2k}} |f^*\alpha_r \cup \alpha'_s|_{\iota}.
\]
Write $\cl_X(Z_i) = \sum_r a_{i,r} \alpha_r$ and $\cl_X(Z'_j) = \sum_s a'_{j,s} \alpha'_s$ for some $a_{i,r},a'_{j,s} \in \bQ_\ell$.
It follows from the compatibility of the intersection and the cup product under the cycle class map that
\[
|f^*Z_i \cdot Z'_j| \le \sum_{1\le r, \, s \le b_{2k}} |a_{i,r} a'_{j,s} f^*\alpha_r \cup \alpha'_s|_{\iota} \le C \max_{1\le r, \, s \le b_{2k}} |f^*\alpha_r \cup \alpha'_s|_{\iota},
\]
where $C = b_{2k}^2 \max_{i,j,r,s}|a_{i,r} a'_{j,s}|_{\iota}$, independent of $f$.
We thus prove the first part of \cref{lemma:easy-direction}.
The second part follows from the first part applied to $f^{\circ m}$ and the spectral radius formula.
\end{proof}

We first verify that NC \cref{conj:NC} implies DDC \cref{conj:DDC}.
In fact, we prove a slightly stronger form as follows.

\begin{lemma}
\label{lemma:norm-DDC}
Suppose that \cref{conj:NC} holds on $X$.
Then for any dynamical correspondence $f$ of $X$, we have that for any $0\le k\le n$,
\begin{equation}
\label{eq:chi-limit}
\chi_{2k}(f)_{\iota} = \lim_{m\to\infty} \big\|(f^{\Diamond m})^*|_{H^{2k}(X)} \big\|_{\iota}^{1/m} = \lambda_k(f),
\end{equation}
and for any $0\le k\le n-1$,
\begin{equation}
\label{eq:Dinh-ineq}
\chi_{2k+1}(f)_{\iota} \coloneqq \limsup_{m\to\infty} \big\|(f^{\Diamond m})^*|_{H^{2k+1}(X)} \big\|_{\iota}^{1/m} \le \sqrt{\lambda_k(f)\lambda_{k+1}(f)}.
\end{equation}
In particular, \cref{conj:DDC} holds.
\end{lemma}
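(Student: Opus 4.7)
The plan is to apply the Norm Comparison conjecture to the $m$-th dynamical iterate $f^{\Diamond m}$, take $m$-th roots, and pass to the limit, using that the constant $C$ in \cref{conj:NC} is independent of the correspondence. The reverse inequality in the even case comes from the auxiliary \cref{lemma:easy-direction}, which will also upgrade the $\limsup$ to a genuine limit.

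More concretely, for the upper bound, I would note that since $f^{\Diamond m}$ is itself a dynamical correspondence, \cref{conj:NC} applied to it yields
\[
\big\|(f^{\Diamond m})^*|_{H^{2k}(X)}\big\|_{\iota} \le C \, \big\|(f^{\Diamond m})^*|_{\N^k(X)_\bR}\big\|,
\]
and analogously for odd cohomology. Taking $m$-th roots, the factor $C^{1/m}$ tends to $1$, so
\[
\limsup_{m\to\infty}\big\|(f^{\Diamond m})^*|_{H^{2k}(X)}\big\|_{\iota}^{1/m} \le \lim_{m\to\infty}\big\|(f^{\Diamond m})^*|_{\N^k(X)_\bR}\big\|^{1/m} = \lambda_k(f),
\]
where the last equality is the definition of the numerical dynamical degree (whose existence as a limit is ensured by \cref{thm:lambda-k-exists}). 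The odd case proceeds identically: taking $m$-th roots of the NC bound and using the sub-multiplicativity $\sqrt{ab}^{1/m}=a^{1/(2m)}b^{1/(2m)}$, together with the existence of the two adjacent numerical dynamical degrees as limits, gives $\chi_{2k+1}(f)_\iota \le \sqrt{\lambda_k(f)\lambda_{k+1}(f)}$.

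For the lower bound in the even case, I would apply \cref{lemma:easy-direction} to $f^{\Diamond m}$:
\[
\big\|(f^{\Diamond m})^*|_{\N^k(X)_\bR}\big\| \le C'\,\big\|(f^{\Diamond m})^*|_{H^{2k}(X)}\big\|_{\iota}.
\]
Taking $m$-th roots and passing to $\liminf$ on the right-hand side yields
\[
\lambda_k(f) \le \liminf_{m\to\infty}\big\|(f^{\Diamond m})^*|_{H^{2k}(X)}\big\|_{\iota}^{1/m}.
\]
Combined with the matching $\limsup$ bound above, both one-sided limits coincide with $\lambda_k(f)$, so the limit in \eqref{eq:chi-limit} exists and equals $\lambda_k(f)$, which is precisely the content of \cref{conj:DDC}. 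The last sentence then follows immediately.

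I do not anticipate a serious obstacle: the content is entirely a bookkeeping exercise once NC is in hand. The one point requiring care is the fact that the constant in \cref{conj:NC} does not depend on $f$, which is crucial for $C^{1/m}\to 1$ when $f$ is replaced by $f^{\Diamond m}$. The odd case produces only an inequality (no $\liminf$ companion bound is available from \cref{lemma:easy-direction} alone), which matches exactly the form of \eqref{eq:Dinh-ineq}, so no further work is required there.
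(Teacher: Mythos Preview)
Your proposal is correct and follows essentially the same approach as the paper: apply \cref{conj:NC} to $f^{\Diamond m}$, take $m$-th roots and $\limsup$ to get the upper bound via \cref{thm:lambda-k-exists}, then use \cref{lemma:easy-direction} applied to $f^{\Diamond m}$ for the matching $\liminf$ lower bound in the even case. Your observation that the independence of $C$ from $f$ is the crucial point, and that the odd case yields only an inequality, matches the paper exactly.
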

\begin{proof}
Let $f$ be a dynamical correspondence of $X$.
Note that for all $m\in \bN_{>0}$, the dynamical iterates $f^{\Diamond m}$ are still dynamical correspondences by definition.
So by applying the inequality \eqref{eq:Dinh-even} in \cref{conj:NC} to $f^{\Diamond m}$, we have that for any $m\in\bN_{>0}$ and any $0\le k\le n$,
\[
\big\|(f^{\Diamond m})^*|_{H^{2k}(X)}\big\|_{\iota} \le C \, \big\|(f^{\Diamond m})^*|_{\Nk}\big\|,
\]
for some constant $C>0$ independent of $f$ and $m$.
Taking the limsups of the $m$-th roots of both sides yields that
\begin{align*}
\chi_{2k}(f)_{\iota} \coloneqq \limsup_{m\to\infty} \big\|(f^{\Diamond m})^*|_{H^{2k}(X)}\big\|_{\iota}^{1/m} \le \lim_{m\to\infty} \big\|(f^{\Diamond m})^*|_{\Nk}\big\|^{1/m} = \lambda_k(f),
\end{align*}
where the existence of the limit defining $\lambda_k(f)$ follows from \cref{thm:lambda-k-exists}.

On the other hand, by \cref{lemma:easy-direction}, we also have
\begin{align*}
\liminf_{m\to\infty} \big\|(f^{\Diamond m})^*|_{H^{2k}(X)}\big\|_{\iota}^{1/m} \ge \lim_{m\to\infty} \big\|(f^{\Diamond m})^*|_{\N^{k}(X)_\bR}\big\|^{1/m} = \lambda_k(f).
\end{align*}
Combining the above together, we obtain that
\[
\chi_{2k}(f)_{\iota} = \lim_{m\to\infty} \big\|(f^{\Diamond m})^*|_{H^{2k}(X)} \big\|_{\iota}^{1/m} = \lambda_k(f).
\]

Similarly, the inequality \eqref{eq:Dinh-ineq} follows from the inequality \eqref{eq:Dinh-odd} in \cref{conj:NC} applied to $f^{\Diamond m}$ and \cref{thm:lambda-k-exists}.
\end{proof}

\begin{remark}
Assuming \cref{conj:NC}, we see that the $\limsup$ \eqref{eq:chi} defining $\chi_i(f)_{\iota}$ turns out to be a limit for even $i$; \cref{lemma:norm-DDC} also gives further information concerning $\chi_{i}(f)_{\iota}$ for odd $i$.
The latter in turn answers affirmatively \cite[Question~4]{Truong}, where the above inequality \eqref{eq:Dinh-ineq} is named Dinh's inequality.
\end{remark}

We then show that the generalized Weil's Riemann hypothesis (for all varieties) follows from our DDC \cref{conj:DDC} (for all varieties).

\begin{lemma}
\label{lemma:DDC-Serre}
\cref{conj:DDC} holds on $X$ and $X\times X$ implies that \cref{conj:Serre} holds on $X$.
\end{lemma}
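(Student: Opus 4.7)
Let $f$ be a polarized endomorphism of $X$ with $f^*H_X\sim_{\rat} qH_X$. My plan is to derive the modulus condition in \cref{conj:Serre} in three steps: compute the numerical dynamical degrees of $f$; feed them into DDC on $X$ to bound the spectrum on even cohomology; and then use $X\times X$ plus the K\"unneth formula to reach odd cohomology. First, iterating the polarization relation gives $(f^{\circ m})^* H_X^k \num q^{mk} H_X^k$, so $\deg_k(f^{\circ m}) = q^{mk}\cdot H_X^n$, and \cref{thm:lambda-k-exists} (applicable since $f^{\Diamond m}=f^{\circ m}$ for a morphism) yields $\lambda_k(f) = q^k$. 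Since $f$ is a morphism, the spectral radius formula identifies $\chi_{2k}(f)_{\iota}$ with $\rho(f^*|_{H^{2k}(X)})_{\iota}$, and applying the assumed \cref{conj:DDC} on $X$ forces this quantity to equal $q^k$; in particular, every eigenvalue $\mu$ of $f^*|_{H^{2k}(X)}$ satisfies $|\mu|_{\iota}\le q^k$.

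Next I upgrade the inequality to an equality via Poincar\'e duality. From $f^*H_X^n = q^n H_X^n$, the morphism $f$ is finite of degree $q^n$, hence $f_*\circ f^* = q^n\cdot\id$ on $H^{\bullet}(X)$. Combined with the standard adjointness of $f^*$ and $f_*$ under the Poincar\'e pairing, this shows that the spectrum of $f^*|_{H^{2k}(X)}$ coincides with that of $f_*|_{H^{2n-2k}(X)} = q^n\cdot (f^*|_{H^{2n-2k}(X)})^{-1}$; equivalently, the eigenvalues of $f^*|_{H^{2k}(X)}$ are exactly $\{q^n/\lambda\}$ where the $\lambda$ are eigenvalues of $f^*|_{H^{2n-2k}(X)}$. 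Pairing the bound $|\mu|_{\iota}\le q^k$ against the analogous bound $|\lambda|_{\iota}\le q^{n-k}$ (obtained by applying the first paragraph with $k$ replaced by $n-k$) then forces $|\mu|_{\iota}= q^k$ for \emph{every} eigenvalue on even-degree cohomology.

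Finally, I dispose of odd cohomology by applying the same package to $f\times f$, which is again polarized (with the same $q$) on $X\times X$ with respect to $H_{X\times X} = \pr_1^*H_X + \pr_2^*H_X$. The assumed \cref{conj:DDC} on $X\times X$ together with the previous paragraph yields that every eigenvalue of $(f\times f)^*$ on $H^{2i}(X\times X)$ has modulus $q^i$. The K\"unneth decomposition exhibits $H^i(X)\otimes H^i(X)$ as an $(f\times f)^* = f^*\otimes f^*$-stable direct summand of $H^{2i}(X\times X)$, so any eigenvalue $\alpha$ of $f^*|_{H^i(X)}$ (for any $i$, even or odd) gives an eigenvalue $\alpha^2$ of modulus $q^i$ there; hence $|\alpha|_{\iota} = q^{i/2}$, which is the modulus condition in \cref{conj:Serre}. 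The principal delicate point in the argument is the Poincar\'e-duality step of the second paragraph — verifying both $f_*\circ f^* = q^n\cdot\id$ and that $f^*$ is adjoint to $f_*$ — both of which are standard consequences of the projection formula; the rest is clean bookkeeping with DDC and K\"unneth.
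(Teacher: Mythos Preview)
Your argument is correct and follows essentially the same route as the paper: compute $\lambda_k(f)=q^k$, invoke DDC on $X$ for the even degrees, and pass to $f\times f$ on $X\times X$ together with the K\"unneth formula for the odd degrees. The only cosmetic difference is that the paper compresses your second paragraph into a single clause (``by Poincar\'e duality it suffices to prove $\chi_i(f)_\iota\le q^{i/2}$''), whereas you spell out the identity $f_*\circ f^*=q^n\cdot\id$ and the resulting eigenvalue symmetry; likewise, for odd $i$ the paper only extracts the upper bound $\chi_{2k+1}(f)_\iota^2\le \chi_{4k+2}(f\times f)_\iota$ from K\"unneth and then appeals to the same duality remark, while you deduce exact modulus on $H^{2i}(X\times X)$ first and then restrict to the summand $H^i(X)\otimes H^i(X)$---these are equivalent packagings of the same idea.
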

\begin{proof}
Let $f$ be a polarized endomorphism of $X$, i.e., $f^*H_X \sim_{\rat} qH_X$ for an ample divisor $H_X$ and a positive integer $q$.
In particular, we have $\lambda_k(f) = q^k$ (see \cref{rmk:lambda-chi}).
To prove \cref{conj:Serre}, it suffices to show that all eigenvalues of $f^*|_{H^i(X,\bQ_\ell)}$ have absolute value $q^{i/2}$ for any field isomorphism $\iota\colon \ol\bQ_\ell \isom \bC$ (in fact, once this has been proved, all eigenvalues of $f^*|_{H^i(X,\bQ_\ell)}$ are algebraic).
It is then equivalent to proving that $\chi_i(f)_{\iota} \le q^{i/2}$ for any $\iota$ by Poincar\'e duality.
For even indices $i=2k$, this is exactly the assertion in \cref{conj:DDC} applied to $X$ and $f$.
For odd indices $i=2k+1$, one just considers the induced product morphism $f\times f \colon X\times X \to X\times X$, which is still a polarized endomorphism such that $(f\times f)^*H_{X\times X} \sim_{\rat} q H_{X\times X}$ with $H_{X\times X} \coloneqq \pr_1^*H_X + \pr_2^*H_X$.
Applying \cref{conj:DDC} to $X\times X$ and $f\times f$ yields that $\chi_{4k+2}(f\times f)_{\iota} = \lambda_{2k+1}(f\times f) = q^{2k+1}$.
On the other hand, the K\"unneth formula asserts that $\chi_{2k+1}(f)_{\iota}^2 \le \chi_{4k+2}(f\times f)_{\iota}$.
It follows that $\chi_{2k+1}(f)_{\iota} \le q^{(2k+1)/2}$, as desired.
\end{proof}

The lemma below shows that the generalized semisimplicity conjecture (for polarized endomorphisms) follows from our NC \cref{conj:NC}.

\begin{lemma}
\label{lemma:norm-SS}
\cref{conj:NC} implies \cref{conj:SS}.
\end{lemma}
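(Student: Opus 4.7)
The plan is to bootstrap from \cref{conj:NC} to \cref{conj:SS} by combining the already-established implication \cref{conj:NC} $\Rightarrow$ \cref{conj:Serre} with a standard Jordan-block growth estimate. Throughout, $f$ is a polarized endomorphism of $X$ with $f^*H_X \sim_{\rat} qH_X$ and $q\in \bN_{\ge 2}$.

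First I would invoke Serre's Conjecture as a black box: by applying \cref{lemma:norm-DDC} to both $X$ and $X\times X$, \cref{conj:NC} yields \cref{conj:DDC} on $X$ and on $X\times X$, so \cref{lemma:DDC-Serre} gives \cref{conj:Serre} on $X$. Hence every eigenvalue of $f^*|_{H^i(X)}$ has complex absolute value exactly $q^{i/2}$ under $\iota$, for each $0\le i\le 2n$.

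Next I would compute the numerical norm. Since $f$ is a morphism with $f^{m*}H_X \num q^m H_X$, we have
\[
\deg_k(f^{\circ m}) \;=\; (f^{\circ m})^*H_X^k \cdot H_X^{n-k} \;=\; q^{km}\, H_X^n.
\]
By \cref{rmk:norm-eff-corr}, $\|f^{m*}|_{\N^k(X)_\bR}\|_1 = q^{km}H_X^n$, and since all norms on the finite-dimensional space $\End_\bR(\N^k(X)_\bR)$ are equivalent, there is $C_1 > 0$ independent of $m$ with $\|f^{m*}|_{\N^k(X)_\bR}\| \le C_1 q^{km}$. Applying \cref{conj:NC} to the polarized iterate $f^{\circ m}$ (which is still a dynamical correspondence) then gives, for all $m\ge 1$,
\[
\bigl\|f^{m*}|_{H^{2k}(X)}\bigr\|_{\iota} \;\le\; C_2\, q^{km}, \qquad \bigl\|f^{m*}|_{H^{2k+1}(X)}\bigr\|_{\iota} \;\le\; C_2\, q^{(2k+1)m/2},
\]
uniformly in $m$.

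Finally I would conclude via linear algebra. Fix $i$ and set $\rho = q^{i/2}$. Transporting $f^*|_{H^i(X)}$ via $\iota$ to a $\bC$-linear operator $A$, we have all eigenvalues of $A$ on the circle $|\cdot|=\rho$ (by Serre) and $\sup_m \|A^m\|/\rho^m < \infty$ (by the previous step). A Jordan-block of size $s\ge 2$ at an eigenvalue $\lambda$ of modulus $\rho$ would force $\|A^m\|\ge c\,m^{s-1}\rho^m$ for large $m$, contradicting the uniform bound; hence every Jordan block has size $1$, so $A$ is diagonalizable over $\bC$. Pulling back along $\iota^{-1}$ yields diagonalizability of $f^*|_{H^i(X)}$ over $\ol\bQ_\ell$, which is exactly \cref{conj:SS}.

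The only delicate point is the last step: the Jordan-block argument only controls eigenvalues at the \emph{maximal} modulus, so without Serre one cannot rule out large Jordan blocks at eigenvalues of smaller modulus. This is precisely why Step~1 (invoking Serre, which forces all eigenvalues to lie on the single circle of radius $q^{i/2}$) is essential for the proof to go through, and it is the key reason \cref{conj:NC}, as opposed to merely the even-degree inequality \eqref{eq:Dinh-even}, suffices to deduce \cref{conj:SS}.
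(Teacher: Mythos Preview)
Your argument is correct and its core---apply \cref{conj:NC} to $f^{\circ m}$ to obtain $\bigl\|(f^*)^m|_{H^i(X)}\bigr\|_\iota \lesssim q^{mi/2}$, then invoke a Jordan-block growth bound---matches the paper's. The difference is your Step~1: the paper does not first pass through \cref{conj:Serre}, but jumps directly from the norm bound to ``no Jordan blocks of size $\ge 2$.'' Your caution here is well-founded: the bound $\|A^m\|\le C\rho^m$ by itself does not preclude Jordan blocks at eigenvalues of modulus strictly less than $\rho$, so one genuinely needs every eigenvalue of $f^*|_{H^i(X)}$ to sit on the circle $|\cdot|_\iota=q^{i/2}$ for the conclusion to follow. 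Your route to this fact, however, goes through \cref{lemma:DDC-Serre} and hence uses \cref{conj:NC} on $X\times X$, which is slightly more than the hypothesis as the paper intends it (NC on $X$ alone; compare the explicit ``on $X$ and $X\times X$'' in \cref{thm:C}\eqref{Assertion:C-3}). A more economical route using only NC on $X$: once the bound $\bigl\|(f^*)^m|_{H^j}\bigr\|_\iota\lesssim q^{mj/2}$ is in hand for \emph{every} $j$, combine $f_*f^*=q^n\id$ (as $\deg f=q^n$) with the Poincar\'e-duality adjunction identifying $f_*|_{H^i}$ with the transpose of $f^*|_{H^{2n-i}}$ to obtain $\bigl\|(f^*)^{-m}|_{H^i}\bigr\|_\iota\lesssim q^{-mi/2}$; this forces every eigenvalue to have modulus exactly $q^{i/2}$, after which your Jordan-block argument goes through verbatim.
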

\begin{proof}
Let $f$ be a polarized endomorphism of $X$, i.e., $f^*H_X \sim_{\rat} qH_X$ for an ample divisor $H_X$ and a positive integer $q$.
Then one has $(f^m)^*H_X \sim_{\rat} q^m H_X$ for any $m\in\bN_{>0}$.
In particular, the $k$-th degree $\deg_k(f^m)$ of $f^m$ with respect to $H_X$ is equal to $q^{mk}H_X^n$.
Then by \cref{rmk:norm-eff-corr} we have that
\[
\big\|{(f^m)^*|_{\Nk}}\big\| \sim \big\|{(f^m)^*|_{\Nk}}\big\|_1 = \deg_k(f^m) = q^{mk}H_X^n.
\]
Applying \cref{conj:NC} to $f^m$ yields that
\[
\big\|{(f^m)^*|_{H^{2k}(X)}}\big\|_{\iota} \lesssim \big\|{(f^m)^*|_{\Nk}}\big\| \sim q^{mk}, \quad\quad \textrm{and}
\]
\[
\big\|{(f^m)^*|_{H^{2k+1}(X)}}\big\|_{\iota} \lesssim \sqrt{\big\|{(f^m)^*|_{\Nk}}\big\|\big\|{(f^m)^*|_{\N^{k+1}(X)_\bR}}\big\|} \sim q^{m(2k+1)/2}.
\]
Combining them together, we have shown that for each $0\le i\le 2n$,
\[
\big\|{(f^*)^m|_{H^{i}(X)}}\big\|_{\iota} = \big\|{(f^m)^*|_{H^{i}(X)}}\big\|_{\iota} \lesssim q^{mi/2}.
\]
It follows that the Jordan canonical form of $f^*|_{H^{i}(X)}$ over $\ol\bQ_\ell$ has no Jordan blocks of size $\ge 2$, for otherwise, there would be certain polynomial growth term $m^{n_i}$ of the norm of $(f^*)^m|_{H^{i}(X)}$.
We thus prove that $f^*|_{H^{i}(X)}$ is semisimple.
\end{proof}

Now, let us recall a generalization of Grothendieck's Hodge index theorem on surfaces to higher-dimensional varieties, which is also known as the Khovanskii--Teissier inequality. We refer to \cite[\S 1.6.A]{Lazarsfeld-I} for a proof. Note that the inequality also follows from the Hodge--Riemann bilinear relation for divisors (see, e.g., \cite[\S 2.5]{Hu20a}). 

\begin{theorem}[Khovanskii--Teissier inequality]
\label{thm:KT}
Let $Z$ be a projective variety of dimension $n$ over $\bk$.
Let $\alpha,\beta\in \N^1(Z)_\bR$ be nef Cartier divisor classes on $Z$.
Then for all $1\le i\le n-1$, we have
\[
(\alpha^{n-i} \cdot \beta^{i})^2 \ge (\alpha^{n-i+1} \cdot \beta^{i-1})(\alpha^{n-i-1} \cdot \beta^{i+1}).
\]
\end{theorem}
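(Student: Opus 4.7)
The plan is to reduce the desired inequality to the classical Hodge index theorem on a smooth projective surface.

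First, by continuity of the intersection product on the finite-dimensional vector space $\N^1(Z)_\bR$, and since the nef cone is the closure of the ample cone, a limiting argument reduces to the case in which $\alpha$ and $\beta$ are ample $\bR$-Cartier classes; density and scaling then reduce further to the case in which both are represented by very ample Cartier divisors. If $Z$ is singular, we pass to a smooth projective model via a de Jong alteration $\pi\colon \tilde Z\to Z$: by the projection formula each of the six relevant intersection numbers scales by $\deg(\pi)$, so both sides of the desired inequality are multiplied by $\deg(\pi)^2$, and it suffices to prove the inequality on $\tilde Z$.

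The core step is a Bertini cut-down. After replacing $\alpha$ and $\beta$ by sufficiently positive multiples if necessary, choose $n-i-1$ general members of $|\alpha|$ and $i-1$ general members of $|\beta|$; by Bertini's theorem (valid in arbitrary characteristic for general hyperplane sections of a smooth variety in a fixed projective embedding) their scheme-theoretic intersection is a smooth irreducible projective surface $S\subset \tilde Z$. Writing $\alpha_S \coloneqq \alpha|_S$ and $\beta_S \coloneqq \beta|_S$, the projection formula gives, up to a common positive scalar absorbed into the chosen multiples,
\[
\alpha_S^2 = \alpha^{n-i+1}\cdot \beta^{i-1},\quad \alpha_S\cdot \beta_S = \alpha^{n-i}\cdot \beta^{i},\quad \beta_S^2 = \alpha^{n-i-1}\cdot \beta^{i+1}.
\]
The desired inequality thus reduces to $(\alpha_S\cdot \beta_S)^2 \ge \alpha_S^2\cdot \beta_S^2$, which is the Hodge index theorem for nef $\bR$-divisors on the smooth projective surface $S$: the intersection pairing on $\N^1(S)_\bR$ has exactly one positive eigenvalue, and the reverse Cauchy--Schwarz inequality then holds for any two classes in the closure of the positive cone, in particular for nef classes.

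The main obstacle I expect is the Bertini step: ensuring that successive general members of two distinct very ample linear systems on $\tilde Z$ cut out a smooth irreducible surface, uniformly in characteristic. This is standard but requires some bookkeeping, in particular re-embedding after each hyperplane section and making sure that generic smoothness propagates through the iterated sectioning. As a backup approach, one can bypass Bertini entirely by invoking the mixed Hodge--Riemann bilinear relation directly for the symmetric form $(\xi,\eta)\mapsto \xi\cdot \eta\cdot \alpha^{n-i-1}\cdot \beta^{i-1}$ on $\N^1(Z)_\bR$: this form has hyperbolic signature on the appropriate subspace, and the reverse Cauchy--Schwarz inequality applied to $\xi=\alpha$ and $\eta=\beta$ is precisely the Khovanskii--Teissier inequality.
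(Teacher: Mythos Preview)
The paper does not give its own proof of this theorem: it simply refers to \cite[\S1.6.A]{Lazarsfeld-I} and remarks that the inequality also follows from the Hodge--Riemann bilinear relation for divisors. Your primary argument (reduce to very ample, Bertini cut-down to a surface, then Hodge index) is exactly Lazarsfeld's proof, and your backup is precisely the Hodge--Riemann route the paper mentions, so you are aligned with both references.

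One small technical point on the order of operations: you reduce to very ample on $Z$ \emph{before} passing to the alteration $\tilde Z$, but the pullback of a very ample divisor along a generically finite alteration is only nef and big, not ample, so ``replacing by sufficiently positive multiples'' on $\tilde Z$ does not recover very ampleness and the Bertini step stalls. The fix is to alter first (pullbacks of nef classes remain nef), and then carry out the limiting reduction to very ample on the smooth variety $\tilde Z$. Alternatively, since resolution of singularities is known for surfaces in all characteristics, you can skip the alteration entirely: cut $Z$ itself down by Bertini to an irreducible (possibly singular) surface, then invoke the Hodge index theorem there.
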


From this one can easily deduce, as in the next lemma, that the degree sequences $\{\deg_i(f)\}_i$ of irreducible correspondences $f$ are log-concave.
Here a finite sequence $\{a_i\}_{i=0}^n$ of positive real numbers is said to be {\it log-concave}, if $a_i^2\ge a_{i-1}a_{i+1}$ for all $1\le i\le n-1$.
In particular, there are integers $0\le s\le t\le n$ such that
\[
0 < a_0 < \cdots < a_s = \cdots = a_t > \cdots > a_n > 0.
\]
For meromorphic maps of compact K\"ahler manifolds, this log-concavity property is well-known in Complex Dynamics. The version for irreducible correspondences (possibly on a singular variety) can be found, e.g., in \cite{Truong20}. In this paper, we only treat the smooth case.

\begin{lemma}
\label{lemma:log-concave}
Let $H_X$ be a fixed ample divisor on $X$.
Let $f$ be an irreducible correspondence of $X$.
Then the degree sequence $\{\deg_i(f)\}_i$ is log-concave, where
\[
\deg_i(f) \coloneqq f^*H_X^i\cdot H_X^{n-i} = f\cdot \pr_1^*H_X^{n-i} \cdot \pr_2^*H_X^{i}.
\]
Precisely, for all $1\le i\le n-1$, we have
\[
\deg_i(f)^2 \ge \deg_{i-1}(f)\deg_{i+1}(f).
\]
\end{lemma}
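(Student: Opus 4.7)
The plan is to reduce the claim to a direct application of the Khovanskii--Teissier inequality (\cref{thm:KT}) to the irreducible variety $f$ itself, viewed as an $n$-dimensional projective subvariety of $X\times X$.

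First, since $f$ is an irreducible correspondence, it is an irreducible closed subvariety of $X\times X$ of dimension $n$. Consider the restrictions
\[
\alpha \coloneqq (\pr_1^*H_X)|_f, \qquad \beta \coloneqq (\pr_2^*H_X)|_f,
\]
which are Cartier divisor classes on $f$ (they are pulled back from line bundles on the smooth variety $X\times X$). Because $H_X$ is ample on $X$, the pullbacks $\pr_j^*H_X$ are nef on $X\times X$, and nefness is preserved under restriction to a subvariety; hence $\alpha,\beta\in \N^1(f)_\bR$ are nef.

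Next, I invoke \cref{thm:KT} applied to the projective variety $Z=f$ with the nef classes $\alpha,\beta$: for every $1\le i\le n-1$,
\[
(\alpha^{n-i}\cdot \beta^{i})^2 \;\ge\; (\alpha^{n-i+1}\cdot \beta^{i-1})(\alpha^{n-i-1}\cdot \beta^{i+1}),
\]
where the intersection numbers are computed on $f$. By the projection formula applied to the closed embedding $\iota_f\colon f\hookrightarrow X\times X$, we have
\[
\alpha^{n-i}\cdot \beta^{i} \;=\; (\pr_1^*H_X)^{n-i}\cdot (\pr_2^*H_X)^{i}\cdot [f] \;=\; f\cdot \pr_1^*H_X^{n-i}\cdot \pr_2^*H_X^{i} \;=\; \deg_i(f),
\]
and similarly for the two terms on the right. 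Substituting yields precisely $\deg_i(f)^2\ge \deg_{i-1}(f)\deg_{i+1}(f)$, which is the desired log-concavity.

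The only non-routine point is the applicability of \cref{thm:KT}, since the irreducible variety $f$ may well be singular; however, the theorem as stated in the excerpt is for an arbitrary projective variety $Z$, so no resolution or normalization is needed. (If one wished to avoid this generality, one could instead pass to a desingularization or the normalization $\wtilde f\to f$, pull back $\alpha,\beta$ there---nefness is preserved---and note that the intersection numbers are unchanged by the projection formula for the birational map $\wtilde f\to f$.) This is the main subtlety; everything else is bookkeeping with the projection formula.
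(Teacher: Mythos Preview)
Your proposal is correct and follows essentially the same approach as the paper: restrict $\pr_1^*H_X$ and $\pr_2^*H_X$ to the irreducible subvariety $f$, observe these are nef, and apply the Khovanskii--Teissier inequality (\cref{thm:KT}) on $Z=f$ to obtain the log-concavity of $\deg_i(f)$. Your additional remarks on the projection formula and on the singularity issue are fine elaborations but not needed beyond what the paper records.
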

\begin{proof}
We let $\alpha = (\pr_1^*H_X)|_f$ (resp. $\beta = (\pr_2^*H_X)|_f$) be the restriction of the nef divisor $\pr_1^*H_X$ (resp. $\pr_2^*H_X$) to $f$, respectively.
Note that $f\cdot \pr_1^*H_X^{n-i} \cdot \pr_2^*H_X^{i} = \alpha^{n-i} \cdot \beta^{i}$.
Then the log-concavity of the degree sequence $\{\deg_i(f)\}_i$ of $f$ follows readily from \cref{thm:KT}.
\end{proof}

Using the fact that any log-concave sequence is stable under suitable scaling by definition, we obtain a technical lemma as follows, which is particularly useful in practice by reducing the upper bound from the maximum of a log-concave sequence to a prescribed index $k$.
This is also a motivation for us to introduce \cref{conj:Gr}.

\begin{lemma}
\label{lemma:log-concave-sequence}
Let $\{a_j\}_{j=0}^{n}$ be a log-concave sequence of positive real numbers, i.e., $a_j^2\ge a_{j-1}a_{j+1}$ for all $1\le j\le n-1$.
Let $\{b_i\}_{i=0}^{2n}$ be a finite sequence of nonnegative real numbers such that for any positive rational number $r\in \bQ_{>0}$ and any $0\le i\le 2n$,
\begin{equation}
\label{eq:log-concave-sequence}
r^i b_i \le \max_{0\le j\le n} r^{2j} a_j.
\end{equation}
Then we have for any $0\le k\le n$,
\begin{equation}
\label{eq:Dinh-even-sequence}
b_{2k} \le a_{k},
\end{equation}
and for any $0\le k\le n-1$,
\begin{equation}
\label{eq:Dinh-odd-sequence}
b_{2k+1} \le \sqrt{a_{k}a_{k+1}}.
\end{equation}
\end{lemma}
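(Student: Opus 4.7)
The plan is to exploit the freedom in the parameter $r$ in the hypothesis \eqref{eq:log-concave-sequence} by choosing $r$ cleverly so that the maximum on the right-hand side is attained precisely at a single index (in the even case) or is shared between two consecutive indices (in the odd case). Since the hypothesis is stated for rational $r$, but both sides of \eqref{eq:log-concave-sequence} are continuous in $r$ and $\bQ_{>0}$ is dense in $\bR_{>0}$, the inequality automatically extends to all $r\in\bR_{>0}$; I will therefore work with real $r$ throughout.

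For the even case \eqref{eq:Dinh-even-sequence}, I would first note that $r^{2k}a_k = \max_{0\le j\le n} r^{2j}a_j$ holds as soon as $r^{2j}a_j \le r^{2k}a_k$ for every $j$, i.e.\ $a_{k-1}/a_k \le r^2 \le a_k/a_{k+1}$. Log-concavity ($a_k^2\ge a_{k-1}a_{k+1}$) guarantees that this interval is nonempty; log-concavity also implies that the successive ratios $a_{j+1}/a_j$ are nonincreasing, so telescoping gives the required bound $r^{2j}a_j\le r^{2k}a_k$ for all remaining $j$. Plugging any such $r$ into \eqref{eq:log-concave-sequence} with $i=2k$ yields $r^{2k}b_{2k}\le r^{2k}a_k$, hence $b_{2k}\le a_k$.

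For the odd case \eqref{eq:Dinh-odd-sequence}, I would equalize the two terms straddling the index $k+\tfrac12$: take $r^2 = a_k/a_{k+1}$, so that $r^{2k}a_k = r^{2(k+1)}a_{k+1}$. Using once more that the ratios $a_{j+1}/a_j$ are monotone nonincreasing, a short induction (moving away from $k$ and $k+1$) shows that
\[
r^{2j}a_j \le r^{2k}a_k \quad \text{for every } 0\le j\le n.
\]
Applying \eqref{eq:log-concave-sequence} with $i = 2k+1$ gives
\[
r^{2k+1}b_{2k+1} \le r^{2k}a_k,
\]
so $b_{2k+1}\le a_k/r = a_k \cdot (a_{k+1}/a_k)^{1/2} = \sqrt{a_k a_{k+1}}$, as desired.

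I do not anticipate a serious obstacle: the entire argument is essentially the observation that, by log-concavity, the function $j\mapsto 2j\log r + \log a_j$ is concave and hence attains its maximum either at a single index (choose $r$ freely inside the relevant interval) or at two consecutive indices (choose $r^2 = a_k/a_{k+1}$). The only technical point to be careful about is the rational-versus-real issue for $r$, which is handled by continuity and density, and checking that the monotonicity of the ratios $a_{j+1}/a_j$ really does give the required bounds $r^{2j}a_j \le r^{2k}a_k$ for all $j$, not just $j\in\{k-1,k,k+1\}$ in the even case and $j\in\{k,k+1\}$ in the odd case.
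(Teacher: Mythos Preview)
Your proposal is correct and follows essentially the same approach as the paper: extend \eqref{eq:log-concave-sequence} to real $r$ by continuity, then exploit log-concavity to choose $r$ so that the maximum on the right lands at the desired index. The only organizational difference is that the paper uses the single choice $r_k^2 = a_k/a_{k+1}$ (for $0\le k\le n-1$) to handle both $i=2k$ and $i=2k+1$ at once, and then observes that the case $k=n$ is already covered by the computation for $k=n-1$, whereas you treat the even and odd cases with separate choices of $r$.
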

\begin{proof}
By continuity, \cref{eq:log-concave-sequence} holds true for any positive real number $r\in \bR_{>0}$.
Let us first consider the case when $0\le k\le n-1$.
For this fixed $k$, we choose $r_k^2 \coloneqq a_{k}/a_{k+1} \in \bR_{>0}$.
Clearly, the new finite sequence $\{c_{k,j} \coloneqq r_k^{2j} a_j\}_{j=0}^n$ of positive real numbers is log-concave since so is $\{a_j\}_{j=0}^n$ by assumption.
In particular, one has
\[
\cdots \le \frac{c_{k,k-1}}{c_{k,k}} \le \frac{c_{k,k}}{c_{k,k+1}} = \frac{a_{k}}{r_k^2 a_{k+1}} = 1 \le \frac{c_{k,k+1}}{c_{k,k+2}} \le \cdots .
\]
It follows immediately that $c_{k,k-1} \le c_{k,k} = c_{k,k+1} \ge c_{k,k+2}$.
The log-concavity property infers that the maximum of all $c_{k,j} = r_k^{2j} a_j$ can be achieved at $c_{k,k}$ (or $c_{k,k+1}$).
Substituting $r_k$ into the inequality \eqref{eq:log-concave-sequence}, we obtain that for any $0\le i\le 2n$,
\[
r_k^{i} b_{i} \le \max_{0\le j\le n} r_k^{2j} a_j = r_k^{2k} a_k.
\]
In particular, for $i=2k$, we have the inequality \eqref{eq:Dinh-even-sequence}; if $i=2k+1$, the inequality \eqref{eq:Dinh-odd-sequence} follows.

The remaining case $k=n$ has been covered by the case $k=n-1$. Indeed, it follows from the above proof that $r_{n-1}^{2n}b_{2n} \le \max_{0\le j\le n} r_{n-1}^{2j} a_j = c_{n-1,n-1} = c_{n-1,n} = r_{n-1}^{2n} a_n$.
\end{proof}

With the help of the above \cref{lemma:log-concave-sequence}, we can now prove \cref{thm:B}\eqref{Assertion:B-1} in the case when the dynamical correspondence is irreducible.

\begin{lemma}
\label{lemma:Gr+log-concave}
Suppose that Conjecture~\hyperref[conj:Gr]{$G_r(X)$} holds.
Let $f$ be an irreducible dynamical correspondence of $X$.
Then there is a positive constant $C>0$, independent of $f$, such that for any $0\le k\le n$,
\[
 |\Tr(f^*|_{H^{2k}(X)}) | \le C \deg_k(f),
\]
and for any $0\le k\le n-1$,
\[
|\Tr(f^*|_{H^{2k+1}(X)})| \le C \sqrt{\deg_k(f) \deg_{k+1}(f)}.
\]
\end{lemma}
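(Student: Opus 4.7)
My plan is to isolate each individual trace $T_i\coloneqq\Tr(f^*|_{H^i(X)})$ by pairing $G_r\circ f$ with the K\"unneth components of the diagonal, and then invoke \cref{lemma:log-concave-sequence} to convert a family of $r$-parameterized max-type bounds into the sharp estimates stated in the lemma.

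To begin, since Conjecture~\hyperref[conj:Gr]{$G_r(X)$} entails the standard conjecture~\hyperref[conj:C]{$C$} (see \cref{rmk:Gr}(2)), each K\"unneth component $\pi_{2n-i}$ is represented by a rational algebraic cycle $\Delta_{2n-i}\in\Z^n(X\times X)_\bQ$. Combining the Lefschetz trace formula (\cref{prop:Lefschetz-trace-formula}) with the identity $(G_r\circ f)^*|_{H^i(X)}=r^i f^*|_{H^i(X)}$ (coming from $G_r^*|_{H^i(X)}=r^i\id$ and the functoriality in \cref{prop:functorial}(1)) yields, for every $0\le i\le 2n$ and every $r\in\bQ_{>0}$,
\[
(G_r\circ f)\cdot \Delta_{2n-i} = (-1)^i r^i T_i.
\]

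The crucial step is to bound the left-hand side uniformly in $r$. For any decomposition $G_r\circ f\equiv h^+-h^-$ into effective classes $h^\pm\in\Eff^n(X\times X)$ and a fixed decomposition $\Delta_{2n-i}=\Delta_{2n-i}^+-\Delta_{2n-i}^-$ of the cycle $\Delta_{2n-i}$ into effective pieces, \cref{lemma:boundedness} gives $|h^{\pm}\cdot\Delta_{2n-i}^{\pm'}|\le C_1\deg(h^{\pm})$, with $C_1>0$ depending only on $X$ (since the $\Delta_{2n-i}^{\pm'}$ are fixed once and for all). Summing the four contributions and then taking the infimum over decompositions of $G_r\circ f$ shows
\[
|(G_r\circ f)\cdot\Delta_{2n-i}|\le C_1\|G_r\circ f\|.
\]
Invoking Conjecture~\hyperref[conj:Gr]{$G_r(X)$} together with the degree formula $\deg(G_r\circ f)=\sum_j\binom{n}{j}r^{2j}\deg_j(f)$ recalled in the excerpt, I conclude
\[
r^i|T_i|\le C_2\max_{0\le j\le n}r^{2j}\deg_j(f)\qquad\text{for all }r\in\bQ_{>0},
\]
with $C_2>0$ depending only on $X$ (obtained by taking the maximum of the $C_1$'s over the finitely many indices $i$ and absorbing combinatorial factors).

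Finally, since $f$ is irreducible, \cref{lemma:log-concave} ensures that the sequence $\{\deg_j(f)\}_{j=0}^n$ is log-concave with positive entries; by continuity the preceding inequality extends to all $r>0$. Applying \cref{lemma:log-concave-sequence} with $a_j=\deg_j(f)$ and $b_i=|T_i|/C_2$ then delivers both $|T_{2k}|\le C_2\deg_k(f)$ and $|T_{2k+1}|\le C_2\sqrt{\deg_k(f)\deg_{k+1}(f)}$, with the uniform constant $C=C_2$. The conceptual heart and main obstacle is the isolation of individual traces: the plain Lefschetz formula $f\cdot\Delta_X=\sum_i(-1)^iT_i$ only records the alternating sum, and no single value of $r$ can separate the $T_i$ due to sign cancellations. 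What rescues the strategy is precisely that Conjecture~\ref{conj:Gr} carries the algebraicity of all K\"unneth components along with it, enabling us to pick out one trace at a time via $\Delta_{2n-i}$; log-concavity of the degree sequence then elegantly upgrades the resulting max-bound into the sharp component-wise estimates via \cref{lemma:log-concave-sequence}.
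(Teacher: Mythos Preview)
Your proof is correct and follows essentially the same route as the paper: extract $r^i T_i$ via the Lefschetz trace formula applied to $G_r\circ f$ paired with $\Delta_{2n-i}$, bound by $\deg(G_r\circ f)$ using Conjecture~\ref{conj:Gr}, expand the degree as $\sum_j\binom{n}{j}r^{2j}\deg_j(f)$, and feed the resulting family of inequalities into \cref{lemma:log-concave-sequence} together with \cref{lemma:log-concave}. The only cosmetic difference is in bounding $|(G_r\circ f)\cdot\Delta_{2n-i}|$: the paper simply invokes the dual norm, writing $|(G_r\circ f)\cdot\Delta_{2n-i}|\le\|\Delta_{2n-i}\|\,\|G_r\circ f\|'$, whereas you route the same estimate through \cref{lemma:boundedness} after decomposing both cycles into effective pieces---this is a slightly longer path to the same bound, but entirely valid.
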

\begin{proof}
We first note that by the proof of \cite[Theorem~2A11]{Kleiman68} which is due to Lieberman, \cref{conj:C} holds.
Then according to the Lefschetz trace formula (see \cref{prop:Lefschetz-trace-formula}), we have for each $0\le i\le 2n$,
\[
|\Tr((G_r \circ f)^*|_{H^{i}(X)})| = |(G_r \circ f) \cdot \Delta_{2n-i}| \le \|\Delta_{2n-i}\|\|G_r \circ f\|',
\]
where $\|\cdot\|'$ denotes the dual norm \eqref{eq:corr-dual-norm} on $\N^n(X\times X)_\bR$.
Since Conjecture~\hyperref[conj:Gr]{$G_r(X)$} holds, there is a constant $C_1>0$, independent of $f$, so that for any $r\in \bQ_{>0}$,
\[
\|G_r\circ f\|' \le C_1 \deg(G_r\circ f).
\]
Using the definition of $\gamma_r = \cl_{X\times X}(G_r)$ and combining the last two displayed equations, we obtain that
\begin{align*}
r^{i} \, |\Tr(f^*|_{H^{i}(X)})| \le C_1 \|\Delta_{2n-i}\| \deg(G_r \circ f) = C_1 \|\Delta_{2n-i}\| \sum_{j=0}^{n} \binom{n}{j} \deg_j(G_r \circ f).
\end{align*}
Note that
\begin{align*}
\deg_j(G_r \circ f) &= (G_r \circ f)^*H_X^j\cdot H_X^{n-j} = (f^*G_r^*H_X^j)\cdot H_X^{n-j} \\
&= r^{2j} f^*H_X^j\cdot H_X^{n-j} = r^{2j} \deg_j(f).
\end{align*}
It thus follows that
\begin{equation*}
r^{i} \, |\Tr(f^*|_{H^{i}(X)})| \le C_2 \max_{0\le j\le n} r^{2j}\deg_j(f),
\end{equation*}
where the constant $C_2 \coloneqq (n+1) 2^{n} C_1 \max_i \|\Delta_{i}\| > 0$ is independent of $r\in \bQ_{>0}$ and $f$.
Note that by \cref{lemma:log-concave}, the degree sequence $\{\deg_j(f)\}_j$ of the irreducible correspondence $f$ is log-concave.
Hence \cref{lemma:Gr+log-concave} follows directly from \cref{lemma:log-concave-sequence} with $b_i\coloneqq |\Tr(f^*|_{H^{i}(X)})|$ and $a_j\coloneqq C_2\deg_j(f)$.
\end{proof}

\subsection{Proof of Theorem \ref{thm:B}}

\begin{proof}[Proof of \cref{thm:B}\eqref{Assertion:B-1}]
Let $f$ be an arbitrary dynamical correspondence of $X$.
Write $f = \sum_i a_i f_i$, where $a_i\in \bQ_{>0}$ and the $f_i$ are irreducible dynamical correspondences of $X$.
It is thus straightforward to see that
\[
|\Tr(f^*|_{H^{2k}(X)})| \le \sum_i a_i \, |\Tr(f_i^*|_{H^{2k}(X)})| \le \sum_i C \deg_k(a_i f_i) = C \deg_k(f),
\]
where the second inequality follows from \cref{lemma:Gr+log-concave} applied to the irreducible $f_i$.
We hence obtain the inequality \eqref{eq:tr-deg-even}.
As for odd indices, we note that by the Cauchy--Schwarz inequality, the function $\sqrt{\deg_k(\cdot)\deg_{k+1}(\cdot)}$ defined over effective correspondences of $X$ is superadditive.
Precisely, for any two effective correspondences $g$, $h$ of $X$, we always have
\[
\sqrt{\deg_k(g)\deg_{k+1}(g)} + \sqrt{\deg_k(h)\deg_{k+1}(h)} \le \sqrt{\deg_k(g+h)\deg_{k+1}(g+h)}.
\]
In particular, one has
\[
\sum_i \sqrt{\deg_k(a_i f_i)\deg_{k+1}(a_i f_i)} \le \sqrt{\deg_k(f)\deg_{k+1}(f)}.
\]
So similarly, applying \cref{lemma:Gr+log-concave} for each $f_i$, we have
\[
|\Tr(f^*|_{H^{2k+1}(X)})| \le \sum_i a_i \, |\Tr(f_i^*|_{H^{2k+1}(X)})| \le C \sqrt{\deg_k(f)\deg_{k+1}(f)}.
\]
The inequality \eqref{eq:tr-deg-odd} follows and hence we prove \cref{thm:B}\eqref{Assertion:B-1}.
\end{proof}

\begin{proof}[Proof of \cref{thm:B}\eqref{Assertion:B-2}]
Let $f$ be a dynamical correspondence of $X$ such that the $m$-th iterate $f^{\circ m}$ is still a dynamical correspondence of $X$ for any $m\in \bN_{>0}$.
Then it follows from \cref{rmk:norm-eff-corr} that
\[
\deg_k(f^{\circ m}) = \big\|(f^{\circ m})^*|_{\Nk}\big\|_1.
\]
By applying the inequality \eqref{eq:tr-deg-even} in \cref{thm:B}\eqref{Assertion:B-1} to $f^{\circ m}$ for all $m$, there is a uniform positive constant $C>0$, independent of $f^{\circ m}$ or $m$, such that 
\[
|\Tr((f^{\circ m})^*|_{H^{2k}(X)})| \le C \, \big\|(f^{\circ m})^*|_{\Nk}\big\|_1.
\]
Then by the functoriality (see \cref{prop:functorial}), the above inequality becomes
\[
|\Tr((f^*)^m|_{H^{2k}(X)})| \le C \, \big\|(f^{*})^m|_{\Nk}\big\|_1.
\]
Taking limits of the $m$-th roots of both sides yields that $\rho(f^*|_{H^{2k}(X)}) \le \rho(f^*|_{\Nk})$.
Note that the converse direction has been proved in \cref{lemma:easy-direction} so that we actually have the equality \eqref{eq:rho-even}.

Similarly, one can deduce the inequality \eqref{eq:rho-odd} from the inequality \eqref{eq:tr-deg-odd} in \cref{thm:B}\eqref{Assertion:B-1}.
The last assertion that \cref{conj:Serre} holds on $X$ follows readily from the inequalities \eqref{eq:tr-deg-even} and \eqref{eq:tr-deg-odd} by the proof of \cref{lemma:DDC-Serre}.
We hence conclude the proof of \cref{thm:B}.
\end{proof}

\subsection{Proof of Theorem \ref{thm:C}}

\begin{proof}[Proof of \cref{thm:C}\eqref{Assertion:C-1}]
%Let $f$ be an effective correspondence of $X$.
Let $f$ be a dynamical correspondence of $X$.
Then by \cref{rmk:norm-eff-corr}, we see that $\deg_k(f) \sim \|{f^*|_{\Nk}}\|$ for any $0\le k\le n$.
As in the proof of \cref{thm:B}\eqref{Assertion:B-1}, it suffices to consider the case when $f$ is irreducible.
In this case, the degree sequence $\{\deg_j(f)\}_j$ is log-concave by \cref{lemma:log-concave}.
Since Conjecture~\hyperref[conj:Gr]{$G_r(X)$} holds, we see that for any $r\in \bQ_{>0}$, there is a correspondence $G_r$ of $X$ such that $\gamma_r = \cl_{X\times X}(G_r)$, and a constant $C_1>0$, independent of $f$ or $r$, such that
\begin{equation}
\label{eq:proof-C-1}
\|G_r\circ f\| \le C_1 \deg(G_r\circ f).
\end{equation}
For simplicity, we denote by $g_r$ the composite correspondence $G_r \circ f$.
By the definition of $G_r$ or $\gamma_r$, we immediately have for any $0\le i\le 2n$,
\begin{equation}
\label{eq:proof-C-2}
\big\|g_r^*|_{H^{i}(X)}\big\|_{\iota} = r^{i} \big\|f^*|_{H^{i}(X)}\big\|_{\iota}.
\end{equation}

On the other hand, by assumption, Conjecture~\hyperref[conj:Dk]{$D^n(X\times X)$} holds.
This allows us to identify $\N^n(X\times X)_{\bQ_\ell}$ with the $\bQ_\ell$-vector subspace $H_{\alg}^{2n}(X\times X)$ of $H^{2n}(X\times X)$ (see \cite[Proposition~3.6]{Kleiman68}).
In particular, it is convenient to consider the following inclusions of vector spaces:
\[
\begin{tikzcd}
\N^n(X\times X)_{\bQ} \arrow[r, hook] \arrow[d, hook] & \N^n(X\times X)_{\bQ_\ell} \arrow[d, hook] \arrow[r, hook, "\cl_{X\times X}"] & H^{2n}(X\times X) \arrow[d, hook] \\
\N^n(X\times X)_{\bR} \arrow[r, hook] & \N^n(X\times X)_{\bC} \arrow[r, hook] & H^{2n}(X\times X) \otimes_{\bQ_\ell} \bC.
\end{tikzcd}
\]
Note that by definition the norm $\norm{\cdot}_{\iota}$ on $H^{2n}(X\times X)$ is induced from a norm on the complex vector space $H^{2n}(X\times X) \otimes_{\bQ_\ell} \bC$ with respect to a filed isomorphism $\iota\colon \ol\bQ_\ell \isom \bC$.
Without loss of generality, we may adopt an induced norm on $\N^n(X\times X)_{\bR}$ from the same one on $H^{2n}(X\times X) \otimes_{\bQ_\ell} \bC$.
It thus follows that the norm $\|\cl_{X\times X}(g_r)\|_{\iota}$ of $\cl_{X\times X}(g_r) \in H^{2n}(X\times X)$ is equivalent to the norm $\|g_r\|$ of $g_r$ as an element in $\N^n(X\times X)_{\bR}$.
Furthermore, by Poincar\'e duality and the K\"unneth formula, we know that
\[
H^{2n}(X\times X) = \bigoplus_{i=0}^{2n} H^{i}(X)\otimes H^{2n-i}(X) \isom \bigoplus_{i=0}^{2n} \End_{\bQ_\ell}(H^{i}(X)).
\]
Therefore, for any $0\le i\le 2n$, we have
\begin{equation}
\label{eq:proof-C-3}
\big\|g_r^*|_{H^{i}(X)}\big\|_{\iota} \le \Big\|\bigoplus_{i=0}^{2n} g_r^*|_{H^{i}(X)}\Big\|_{\iota} = \big\|\cl_{X\times X}(g_r)\big\|_{\iota} = \|g_r\|.
\end{equation}

Putting the above \cref{eq:proof-C-1,eq:proof-C-2,eq:proof-C-3} together, we obtain that for any $r\in \bQ_{>0}$,
\begin{equation*}
r^{i} \big\|f^*|_{H^{i}(X)}\big\|_{\iota} \le C_1 \deg(G_r\circ f) \le C_2 \, \max_{0\le j\le n} r^{2j} \deg_j(f),
\end{equation*}
where $C_2 \coloneqq (n+1) 2^{n} C_1$ is a constant independent of $f$ and $r$.
Then the inequalities \eqref{eq:Dinh-even} and \eqref{eq:Dinh-odd} in \cref{conj:NC} follow from \cref{lemma:log-concave-sequence} with $b_i\coloneqq \|f^*|_{H^{i}(X)}\|_{\iota}$ and $a_j\coloneqq C_2\deg_j(f)$.
We thus prove \cref{thm:C}\eqref{Assertion:C-1}.
\end{proof}

\begin{proof}[Proof of \cref{thm:C}\eqref{Assertion:C-2}]
It is contained in \cref{lemma:norm-DDC}.
\end{proof}

\begin{proof}[Proof of \cref{thm:C}\eqref{Assertion:C-3}]
It follows from \cref{lemma:DDC-Serre}.
\end{proof}

\begin{proof}[Proof of \cref{thm:C}\eqref{Assertion:C-4}]
It follows from \cref{lemma:norm-SS}.
\end{proof}

\subsection{Proofs of Theorems \ref{thm:A} and \ref{thm:D}}

We first show that abelian varieties satisfy \cref{conj:Gr}. The multiplication map on abelian varieties plays an important role in the proof.

\begin{lemma}
\label{lemma:Gr-holds-AV}
Let $X$ be an abelian variety of dimension $n$ over $\bk$.
Then for any $r\in \bQ_{>0}$, there is a correspondence $G_r$ of $X$ such that $\gamma_{r} = \cl_{X\times X}(G_r)$;
moreover, for any effective correspondence $f$ of $X$, the composition $G_{r}\circ f$ is also effective.

In particular, \cref{conj:Gr} holds on abelian varieties (for general effective correspondences not only dynamical correspondences).
\end{lemma}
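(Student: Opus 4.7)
The plan is to construct $G_r$ explicitly using the multiplication-by-$m$ isogenies of the abelian variety $X$. The crucial input is that, since $H^i(X) \isom \wedge^i H^1(X)$ and $[m]^*$ acts on $H^1(X)$ as multiplication by $m$, the pullback $[m]^*$ acts on each $H^i(X)$ as multiplication by $m^i$. Writing $r = a/b$ with $a, b \in \bN_{>0}$, I would set
\[
G_r \coloneqq \frac{1}{b^{2n}}\, \Gamma_{[a]} \circ \Gamma_{[b]}^\sT \in \Z^n(X \times X)_\bQ,
\]
where $[m]\colon X \to X$ denotes the multiplication-by-$m$ isogeny and $\Gamma_{[m]}$ is its graph.

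To check $\cl_{X \times X}(G_r) = \gamma_r$, I would apply the contravariance of pullback from \cref{prop:functorial} together with $(\Gamma_{[a]})^* = [a]^*$ and $(\Gamma_{[b]}^\sT)^* = [b]_*$. The push-pull identity $[b]_* \circ [b]^* = \deg([b])\cdot \id = b^{2n} \cdot \id$ forces $[b]_*$ to act on $H^i(X)$ as multiplication by $b^{2n-i}$, so
\[
G_r^*|_{H^i(X)} = \tfrac{1}{b^{2n}}\, [b]_* \circ [a]^* = \tfrac{a^i b^{2n-i}}{b^{2n}} \cdot \id = r^i \cdot \id,
\]
matching the defining property of $\gamma_r$.

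The central step is the effectiveness of $G_r \circ f$. Applying Lieberman's \cref{lemma:Lieberman} with $\phi = \id_X$ and $\psi = [b]$ (resp. $\psi = [a]$) yields the identities
\[
\Gamma_{[b]}^\sT \circ f = (\id_X \times [b])^* f, \qquad \Gamma_{[a]} \circ h = (\id_X \times [a])_* h,
\]
for any correspondence $h$, so altogether
\[
G_r \circ f = \frac{1}{b^{2n}}\, (\id_X \times [a])_*\, (\id_X \times [b])^* f.
\]
Since $[b]$ is a finite flat isogeny, $\id_X \times [b]$ is flat; the flat pullback of the effective cycle $f$ is effective, the proper pushforward by $\id_X \times [a]$ preserves effectiveness, and multiplication by the positive scalar $1/b^{2n}$ does not destroy it. The norm bound of \cref{conj:Gr} is then immediate from \cref{rmk:norm-effective}: for any effective cycle $\alpha$ one has $\|\alpha\| = \deg(\alpha)$, so $\|G_r \circ f\| = \deg(G_r \circ f)$ and the required inequality holds with $C = 1$, independent of both $r$ and $f$.

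The main obstacle to watch out for is precisely this effectiveness claim, because the composition of two effective correspondences is not in general effective—improper intersections in the Chow ring get resolved via deformation, which can introduce negative pieces. What makes the argument succeed here is that we compose $f$ with graphs (and transposed graphs) of \emph{isogenies}, which are finite flat morphisms; this recasts the composition as flat pullback followed by proper pushforward, both of which preserve effectiveness at the level of cycles.
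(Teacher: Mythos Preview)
Your proof is correct and follows essentially the same approach as the paper: define $G_r$ via the multiplication isogenies, then use Lieberman's lemma to rewrite $G_r\circ f$ as a cycle-theoretic operation by morphisms that manifestly preserves effectiveness. The only difference is in how Lieberman is applied: the paper obtains $G_r\circ f = \frac{1}{b^{2n}}([b]\times[a])_*(f)$ after first commuting $\Gamma_{[b]}^\sT$ past $f$ (which is justified only modulo homological equivalence, since $[b]_*$ acts by scalars on each $H^i$), whereas your two-step identification $G_r\circ f = \frac{1}{b^{2n}}(\id_X\times[a])_*(\id_X\times[b])^*f$ avoids that commutation entirely and already holds in the Chow ring---a slightly cleaner route to the same conclusion.
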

\begin{proof}
Let $r\in \bQ_{>0}$ be arbitrary and write $r=n_1/n_2$, where $n_1, n_2\in \bN_{>0}$ are two coprime positive integers.
Let $\Gamma_{[n_i]}$ denote the graph of the multiplication-by-$n_i$ map $[n_i]$ of $X$.
Set
\[
G_r \coloneqq \frac{1}{n_2^{2n}} \, \Gamma_{[n_1]} \circ \Gamma_{[n_2]}^\sT.
\]
Then it is easy to check that $\gamma_r = \cl_{X\times X}(G_r)$, i.e., $G_r^*\alpha = r^i \alpha$ for any $\alpha\in H^i(X)$.

Let $f$ be an arbitrary effective correspondence of $X$.
Then we have
\begin{align*}
G_r \circ f &= \frac{1}{n_2^{2n}} \, \Gamma_{[n_1]} \circ \Gamma_{[n_2]}^\sT \circ f \\
&= \frac{1}{n_2^{2n}} \, \Gamma_{[n_1]} \circ f \circ \Gamma_{[n_2]}^\sT \\
&= \frac{1}{n_2^{2n}} \, ([n_2]\times [n_1])_*(f),
\end{align*}
where the second equality holds since the pullback $f^*$ on cohomology $H^{\bullet}(X)$ is a $\bQ_\ell$-linear map and the third equality follows from Lieberman's \cref{lemma:Lieberman}.
Since effective cycles are preserved by proper pushforward of morphisms, $G_r\circ f$ is effective.
It thus follows that using the norm given by \cref{eq:corr-norm}, we have
\[
\| G_r\circ f \| = \deg(G_r \circ f).
\]
Since all norms on $\N^n(X\times X)_\bR$ are equivalent, \cref{conj:Gr} holds on $X$.
\end{proof}

\begin{remark}
\begin{enumerate}[(1)]
\item One can see from the proof that if $r$ is a positive integer (i.e., $n_2=1$), then $G_r$ is nothing but the graph $\Gamma_{[r]}$ of the multiplication-by-$r$ map $[r]$ on $X$.
Further, for rational $r$, one can check using Lieberman's \cref{lemma:Lieberman} that
\[
G_r = \frac{1}{n_2^{2n}} \, ([n_2]\times [n_1])_*(\Delta_X) = \frac{1}{n_2^{2n}} \, \{(n_2 x, n_1 x) \in X\times X : x\in X\},
\]
which plays a role like the "multiplication-by-$r$ map" on $X$.

\item Note that in the proof of \cref{lemma:Gr-holds-AV}, we actually show that $G_r\circ f$ is effective for every effective correspondence $f$ of $X$.
Consider the $G_r$ action on the correspondences of a general variety $X$ by the left composition.
It is interesting to know whether $G_r$ (if exists) preserves the effective cone $\Eff^n(X\times X)$ in $\N^n(X\times X)_\bR$.
If these were true, then \cref{conj:Gr} holds immediately.
\end{enumerate}
\end{remark}

\begin{proof}[Proof of \cref{thm:A}\eqref{Assertion:A-1}: the abelian variety case]
Let $f$ be an algebraically stable dynamical correspondence of an abelian variety $X$.
Then $f^{\circ m} \sim_{\hom} f^{\Diamond m}$ is still a dynamical correspondence of $X$ for any $m\in \bN_{>0}$ (see \cref{def:alg-stable}).
In this case, all dynamical degrees $\chi_i(f)_{\iota}$, $\lambda_k(f)$ are nothing but the corresponding spectral radii.
Hence the first half of \cref{thm:A}\eqref{Assertion:A-1} follows immediately from \cref{lemma:Gr-holds-AV} and \cref{thm:B}\eqref{Assertion:B-2}.
Then the second half is then a consequence of the first half (see also the proof of \cref{lemma:DDC-Serre}).
\end{proof}

To prove the Kummer surface case of \cref{thm:A}\eqref{Assertion:A-1}, by the same argument as above it suffices to show that \cref{conj:Gr} holds on Kummer surfaces.

\begin{lemma}
\label{lemma:Gr-Kummer}
Let $S$ be a Kummer surface associated to an abelian surface $A$ over $\bk$. 
Then for any $r\in \bQ_{>0}$, there is a correspondence $G_r$ of $S$ such that $\gamma_{r} = \cl_{S\times S}(G_r)$;
moreover, for any dynamical correspondence $f$ of $S$, there is a constant $C>0$, independent of $r$ and $f$, such that
\begin{equation}
\label{eq:Gr-surface}
\|G_r\circ f\| \le C \deg(G_r\circ f).
\end{equation}
\end{lemma}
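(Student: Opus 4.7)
The plan is to reduce to the abelian surface case (Lemma~\ref{lemma:Gr-holds-AV}) using the natural birational geometry $A \xleftarrow{\sigma} \wtilde{A} \xrightarrow{\wtilde{\pi}} S$, where $\sigma$ is the blowup of $A$ at its $16$ two-torsion points and $\wtilde{\pi}$ is the degree-$2$ quotient by the lift of the sign involution; both $\sigma$ and $\wtilde{\pi}$ are generically finite surjective morphisms. Writing $r = n_1/n_2$ with $\gcd(n_1, n_2) = 1$ and taking $G_r^A = \tfrac{1}{n_2^4}([n_2]\times[n_1])_*(\Delta_A)$ on $A$ from Lemma~\ref{lemma:Gr-holds-AV}, I would set
\[
G_r^S \coloneqq \tfrac{1}{2}(\wtilde{\pi} \times \wtilde{\pi})_*(\sigma \times \sigma)^*(G_r^A) \ - \ \tfrac{r^2}{2}\sum_{i=1}^{16}[E_i \times E_i],
\]
where $E_1,\ldots,E_{16}$ are the $16$ disjoint exceptional $(-2)$-curves on $S$ over the nodes of $A/\langle -1 \rangle$. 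Using Lieberman's Lemma~\ref{lemma:Lieberman}, the action of the first summand on $H^i(S)$ factors as $\tfrac{1}{2}\wtilde{\pi}_* \circ \sigma^* \circ (G_r^A)^* \circ \sigma_* \circ \wtilde{\pi}^*$; combined with $\sigma_*$ killing the classes of $\sigma$-exceptional divisors in $H^2(\wtilde{A})$ and $(G_r^A)^*$ acting as $\times r^i$ on $H^i(A)$, this first summand is seen to act as $\times r^i$ on the orthogonal complement of $\bigoplus_i \bQ_\ell[E_i]$ in $H^i(S)$ and as $0$ on the exceptional sublattice. Since the intersection matrix of the $E_i$'s is $-2 I_{16}$, the coefficient $-r^2/2$ of the correction is then exactly forced so that $G_r^S$ acts as $\times r^k$ on $H^k(S)$ globally, and thus represents $\gamma_r^S$.

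For the norm inequality, given a dynamical correspondence $f$ on $S$, I would transport it to a dynamical correspondence on $A$ by setting $\wtilde{f} \coloneqq (\wtilde{\pi} \times \wtilde{\pi})^{\dynpb}(f)$ on $\wtilde{A}$ (dynamical pullback, Definition~\ref{def:dyn-pullback}) and $f_A \coloneqq (\sigma \times \sigma)_*(\wtilde{f})$, which is effective and dominant on $A \times A$. Combining Lemma~\ref{lemma:deg-k-invariant-under-gen-finite} applied to $\wtilde{\pi}$ with the projection formula for the birational morphism $\sigma \times \sigma$ gives $\deg_k(f_A) \sim \deg_k(f)$ up to a uniform constant. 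Lemma~\ref{lemma:Gr-holds-AV} applied to $f_A$ then yields $\|G_r^A \circ f_A\| = \deg(G_r^A \circ f_A) \sim \deg(G_r^S \circ f)$. Next, Lieberman's lemma and the factorization of the cohomological action reduce the bound on $\|\tfrac{1}{2}(\wtilde{\pi} \times \wtilde{\pi})_*(\sigma \times \sigma)^*(G_r^A) \circ f\|$ to a bound of the form $C\|G_r^A \circ f_A\|$, while a direct estimate, using that the $E_i \times E_i$ are fixed cycles, shows $\|\tfrac{r^2}{2}\sum_i [E_i\times E_i] \circ f\| \lesssim r^2 \deg_1(f) \le \deg(G_r^S \circ f)$. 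Combining these yields the desired inequality~\eqref{eq:Gr-surface}.

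The main obstacle will be the careful bookkeeping of the exceptional divisors: pinning down the correction term so that both its cohomological action and its contribution to the norm behave as required, and handling the fact that $(\sigma \times \sigma)^*(G_r^A)$ need not be effective and may acquire components supported on the $\sigma$-exceptional locus of $\wtilde{A}\times\wtilde{A}$. The key geometric fact that tames these contributions is the rigidity of the $16$ disjoint $(-2)$-curves $E_i$ on $S$: any correction involving them has fixed cohomological shape and scales at most like $r^2$, which is precisely absorbed by the middle term $2 r^2 \deg_1(f)$ in the binomial expansion of $\deg(G_r^S \circ f)$, allowing the inequality to close with a constant $C$ independent of both $r$ and $f$.
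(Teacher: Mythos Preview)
Your approach is genuinely different from the paper's and the explicit construction of $G_r^S$ with the correction term $-\tfrac{r^2}{2}\sum_i[E_i\times E_i]$ is correct and elegant. The paper does not build $G_r^S$ from $G_r^A$ at all: it simply takes $G_r=\sum_{i=0}^4 r^i\Delta_i$ (which exists since the standard conjecture $C$ holds on surfaces), chooses a basis $\{c_t\}$ of $\N^2(S\times S)_\bR$, and estimates each $|(G_r\circ f)\cdot c_t|$ directly. It transports $f$ to $A$ exactly as you do (dynamical pullback along $\wtilde\pi$, then pushforward along $\sigma$), but the key technical point is a claim that $e\coloneqq(\sigma\times\sigma)^*(f_A)-\wtilde f$ is \emph{effective} with cohomology class in $H^2(\wtilde A)\otimes H^2(\wtilde A)$; this localizes the exceptional error in a single even degree, where it is absorbed via \cref{lemma:boundedness}.

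The substantive gap in your outline is step~4. Invoking ``Lieberman's lemma and the factorization of the cohomological action'' yields an identity of \emph{correspondences} (or of their actions on $H^\bullet$), not an inequality of norms on $\N^2(S\times S)_\bR$. Concretely, Lieberman gives
\[
T_1\circ f \;=\; \tfrac12\,\Phi_2\circ G_r^A\circ\Phi_1\circ f,\qquad \Phi_1=\Gamma_\sigma\circ\Gamma_{\wtilde\pi}^{\sT},\ \ \Phi_2=\Gamma_{\wtilde\pi}\circ\Gamma_\sigma^{\sT},
\]
but you cannot pass from this to $\|T_1\circ f\|\le C\|G_r^A\circ f_A\|$ by formal means, since composition with $G_r^A$ has operator norm growing with $r$, and the map $-\circ\Phi_1^\sT$ (which would turn $\Phi_1\circ f$ into $f_A$) kills exactly the exceptional part and is not boundedly invertible. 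What actually makes the step go through is the observation that $\Phi_1\circ f=(\id_S\times\sigma)_*(\id_S\times\wtilde\pi)^*f$ is an \emph{effective} cycle on $S\times A$, and then the same mechanism as in \cref{lemma:Gr-holds-AV} (flat pullback by $\id_S\times[n_2]$ followed by pushforward by $\id_S\times[n_1]$) shows $G_r^A\circ\Phi_1\circ f$ is again effective on $S\times A$. Hence $\|G_r^A\circ\Phi_1\circ f\|=\deg(G_r^A\circ\Phi_1\circ f)\lesssim\sum_j r^{2j}\deg_j(f)$, and then the fixed left composition with $\Phi_2$ is a bounded linear map $\N^2(S\times A)_\bR\to\N^2(S\times S)_\bR$. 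This is the argument your step~4 needs; once it is in place your route closes, and it has the virtue of producing an explicit $G_r^S$ rather than appealing to the abstract K\"unneth projectors.
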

\begin{proof}
It is a classical result that the standard conjecture \hyperref[conj:C]{$C$} holds on surfaces (see, e.g., \cite[Corollary~2A10]{Kleiman68}).
Hence $G_r$ can be chosen as $\sum_{i=0}^4 r^i \Delta_i$, where the $\Delta_i$ are K\"unneth projectors.
Let $f$ be a dynamical correspondence of $S$.
It remains to show the inequality \eqref{eq:Gr-surface}, which is equivalent to
\[
\|G_r\circ f\| \lesssim \max_{0\le j\le 2} r^{2j} \deg_j(f).
\]
We henceforth use the symbol $\lesssim$ to simplify the narrative.
Choose correspondences $c_t$ of $S$ such that their numerical classes form a basis of $\N^{2}(S\times S)_\bR$.
One can check that $\|f\| \coloneqq \max_t |f\cdot c_t|$ gives a norm on $\N^{2}(S\times S)_\bR$.
Hence it suffices to show that for any $c_t$,
\begin{equation}
\label{eq:Gr-ct}
|(G_r\circ f) \cdot c_t| \lesssim \max_{0\le j\le 2} r^{2j} \deg_j(f).
\end{equation}

Decompose $c_t$ as $\sum_{i=0}^4 c_{t,i}$ with $c_{t,i}\coloneqq \Delta_i \circ c_t$, whose
cohomology class is in $H^{4-i}(S)\otimes H^i(S)$.
Since $G_r = \sum_{i=0}^4 r^i \Delta_i$, it follows from the Lefschetz trace formula (see \cref{prop:Lefschetz-trace-formula}) that
\begin{equation}
\label{eq:Gr-ct-i}
|(G_r\circ f) \cdot c_t| \le \sum_{i=0}^4 |(G_r\circ f) \cdot c_{t,4-i} | = \sum_{i=0}^4 r^i \, |f \cdot c_{t,4-i} |.
\end{equation}

We aim to give an appropriate estimate about $r^i \, |f \cdot c_{t,4-i}|$.
By definition, there exist a birational morphism $\tau\colon \widetilde{A}\to A$ from a smooth projective surface $\widetilde{A}$ to $A$ so that the composite $\pi\colon \widetilde{A}\to S$ is a finite surjective morphism.
Then we denote by $g$ the dynamical pullback $(\pi\times \pi)^{\dynpb}(f)$ of $f$ under $\pi\times \pi$, which is a dynamical correspondence of $\widetilde{A}$ by definition.
We further denote by $h$ the proper pushforward $(\tau\times \tau)_*(g)$ of $g$ under $\tau\times \tau$.
One can check that $h$ is a dynamical correspondence of $A$.
%We thus have the following diagram:
%\[
%\xymatrix{
%A \ar@{|-}[d]_{h} & \widetilde{A} \ar@{->}[l]_{\tau} \ar@{->}[r]^{\pi} \ar@{|-}[d]_{g} & S \ar@{|-}[d]^{f} \\
%A & \widetilde{A} \ar@{->}[l]_{\tau} \ar@{->}[r]^{\pi} & S. }
%\]
\begin{claim}
\label{claim:e}
$e \coloneqq (\tau\times \tau)^{*}(h) - g$ is an effective correspondence of $\widetilde{A}$, whose cohomology class $\cl_{\widetilde{A} \times \widetilde{A}}(e)$ belongs to $H^2(\widetilde{A})\otimes H^2(\widetilde{A})$.
\end{claim}
\begin{proof}[Proof of \cref{claim:e}]\renewcommand{\qedsymbol}{}
Note that the dynamical pullback $(\tau\times \tau)^{\dynpb}(h)$ coincides with the strict transform $(\tau\times \tau)^{-1}_*(h)$ of $h$ under $(\tau\times \tau)^{-1}$ by definition and hence both are equal to $g$ since $g$ is a dynamical correspondence.
Let $(\tau\times \tau)^{-1}(h)$ be the total inverse image of $h$ under the birational morphism $\tau\times \tau$.
Then the difference $(\tau\times \tau)^{-1}(h) - (\tau\times \tau)^{\dynpb}(h)$ is supported inside the union of exceptional divisors $E\times \widetilde{A}$ or $\widetilde{A}\times E$ for some $\tau$-exceptional curves $E$.
As $h$ is a dynamical correspondence of $A$, any irreducible component of the difference cannot be an exceptional divisor of $\tau\times \tau$, so has dimension $2$.
Therefore, the pullback $(\tau\times \tau)^{*}(h)$ of $h$ under $\tau\times \tau$ is supported on $(\tau\times \tau)^{-1}(h)$ and the difference
$e\coloneqq (\tau\times \tau)^{*}(h) - g$
is an effective correspondence of $\widetilde{A}$.
The second half of \cref{claim:e} follows from $\Supp(e) \subsetneq \cup_E ((E\times \widetilde{A}) \cup (\widetilde{A} \times E))$, where $E$ runs over all $\tau$-exceptional curves.
\end{proof}

Note that by \cref{rmk:dyn-pullback} one has $(\pi\times \pi)_*(g) = d^2 f$ with $d\coloneqq \deg(\pi)$.
It follows that 
\begin{align*}
d^2 \, |f\cdot c_{t,4-i}| &= |(\pi\times \pi)_*(g)\cdot c_{t,4-i}| = |g\cdot (\pi\times \pi)^*c_{t,4-i}| \\
&= |((\tau\times \tau)^{*}(h) - e)\cdot (\pi\times \pi)^*c_{t,4-i}| \\
&\le |(\tau\times \tau)^{*}(h)\cdot (\pi\times \pi)^*c_{t,4-i}| + |e\cdot (\pi\times \pi)^*c_{t,4-i}| \\
&= |h\cdot (\tau\times \tau)_*(\pi\times \pi)^*c_{t,4-i}| + |e\cdot (\pi\times \pi)^*c_{t,4-i}|.
\end{align*}
It remains to estimate the above two summands.
First we note that $h$ is a dynamical correspondence of the abelian surface $A$.
Then we have
\begin{align*}
r^i \, |h\cdot (\tau\times \tau)_*(\pi\times \pi)^*c_{t,4-i}| &= |(G_r\circ h)\cdot (\tau\times \tau)_*(\pi\times \pi)^*c_{t,4-i}| \\
&\le \big\| (\tau\times \tau)_*(\pi\times \pi)^*c_{t,4-i}\big\| \big\|G_r\circ h\big\|' \\
&\lesssim \deg(G_r\circ h) \sim \max_{0\le j\le 2} r^{2j} \deg_j(h) \\
&\sim \max_{0\le j\le 2} r^{2j} \deg_j(f),
\end{align*}
where the first equality follows from the fact $(\tau\times \tau)_*(\pi\times \pi)^*c_{t,4-i} \in H^i(A)\otimes H^{4-i}(A)$;
the first inequality holds by the definition of the dual norm \eqref{eq:corr-dual-norm};
the second inequality is guaranteed by \cref{lemma:Gr-holds-AV};
$\deg_j(h)\sim \deg_j(g) \sim \deg_j(f)$ follows from \cref{lemma:deg-k-invariant-under-gen-finite}.
Secondly, by \cref{claim:e}, $|e\cdot (\pi\times \pi)^*c_{t,4-i}|=0$ unless $i=2$ and by \cref{lemma:boundedness} we have that
\[
|e\cdot (\pi\times \pi)^*c_{t,2}| \lesssim \deg_1(e),
\]
which, thanks to \cref{claim:e} again, is further bounded above by
\begin{align*}
\deg_1((\tau\times \tau)^{*}(h)) &= (\tau\times \tau)^{*}(h) \cdot \pr_1^*H_{\widetilde{A}} \cdot \pr_2^*H_{\widetilde{A}} \\
&= h \cdot (\tau\times \tau)_*(\pr_1^*H_{\widetilde{A}} \cdot \pr_2^*H_{\widetilde{A}}).
\end{align*}
Note that $(\tau\times \tau)_*(\pr_1^*H_{\widetilde{A}} \cdot \pr_2^*H_{\widetilde{A}}) \in \N^1(A)_\bR\otimes \N^1(A)_\bR$.
Hence using the dual operator norm \eqref{eq:corr-operator-norm'}, \cref{lemma:norm}, and \cref{rmk:norm-eff-corr}, one has
\[
h \cdot (\tau\times \tau)_*(\pr_1^*H_{\widetilde{A}} \cdot \pr_2^*H_{\widetilde{A}}) \lesssim \big\|h^*|_{\N^1(A)_\bR}\big\|' \sim \big\|h^*|_{\N^1(A)_\bR}\big\|_1 \sim \deg_1(h) \sim \deg_1(f).
\]

Now, putting the last five displayed equations together, we get that
\[
r^i \, |f\cdot c_{t,4-i}| \lesssim \max_{0\le j\le 2} r^{2j} \deg_j(f).
\]
Combining with \cref{eq:Gr-ct-i} yields \cref{eq:Gr-ct}.
This concludes the proof of \cref{lemma:Gr-Kummer}.
\end{proof}

\begin{proof}[Proof of \cref{thm:A}\eqref{Assertion:A-1}: the Kummer surface case]
Similar with the abelian variety case, it follows from \cref{lemma:Gr-Kummer} and \cref{thm:B}\eqref{Assertion:B-2}.
\end{proof}

The following might be well known to experts but we are not able to find a precise reference in the literature.

\begin{lemma}
\label{lemma:D-Kummer2}
Let $S$ be a Kummer surface associated to an abelian surface $A$ over $\ol\bF_p$.
Then Conjecture~\hyperref[conj:Dk]{$D^2(S\times S)$} holds.
\end{lemma}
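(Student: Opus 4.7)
The plan is to use the Kummer construction to reduce $D^2(S\times S)$ to the analogous statement on the abelian fourfold $A\times A$ over $\ol\bF_p$, and then invoke known cases of the standard conjecture $D$ for abelian varieties.

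Recall from the proof of \cref{lemma:Gr-Kummer} that there is a birational morphism $\tau\colon \widetilde{A}\to A$ (the blowup of $A$ at its $16$ points of $2$-torsion) together with a finite surjective morphism $\pi\colon\widetilde{A}\to S$ of degree $2$; consequently $\pi\times\pi\colon \widetilde{A}\times\widetilde{A}\to S\times S$ is finite surjective of degree $4$. I would first observe that $D^k$ descends along any finite surjective morphism $f\colon X\to Y$ of smooth projective $n$-folds: given $Z\in\Z^k(Y)_\bQ$ with $Z\equiv 0$, the projection formula yields $\deg_X(f^*Z\cdot W')=\deg_Y(Z\cdot f_*W')=0$ for all $W'\in\N^{n-k}(X)$, hence $f^*Z\equiv 0$ on $X$; by $D^k(X)$ we then have $f^*\cl_Y(Z)=\cl_X(f^*Z)=0$, and since $f_*f^*=\deg(f)\cdot\id$ on cohomology, the map $f^*$ is injective on $H^{2k}(Y)$, forcing $\cl_Y(Z)=0$. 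Thus it suffices to establish $D^2(\widetilde{A}\times\widetilde{A})$.

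Next, since $\widetilde{A}$ is the blowup of $A$ at $16$ smooth points of codimension $2$, Manin's blowup formula for Chow motives yields
\[
h(\widetilde{A})\;=\;h(A)\oplus\mathbf{1}(-1)^{\oplus 16},
\]
and taking the tensor square gives
\[
h(\widetilde{A}\times\widetilde{A})\;=\;h(A\times A)\oplus h(A)(-1)^{\oplus 32}\oplus \mathbf{1}(-2)^{\oplus 256}.
\]
This motivic decomposition is compatible with both $H^{\bullet}$ and $\N^{\bullet}$, so $D^2(\widetilde{A}\times\widetilde{A})$ reduces to $D^2(A\times A)$ together with $D^1(A)$ (since the Tate twist $(-1)$ shifts codimension $2$ in the $h(A)(-1)$-summands down to codimension $1$ on $A$) plus the trivial claim on the zero-cycle pieces. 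Now $D^1(A)$ is Matsusaka's classical theorem, while for the abelian $4$-fold $A\times A$ over $\ol\bF_p$, Clozel's result (\cref{rmk:D}(3)) establishes $D^2(A\times A)$ for infinitely many primes $\ell\neq p$, which chain to give $D^2(S\times S)$ for those $\ell$.

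\textbf{Main obstacle.} The most delicate step is verifying that Manin's motivic decomposition really induces direct sum decompositions on $H^{\bullet}(\widetilde{A}\times\widetilde{A})$ and $\N^{\bullet}(\widetilde{A}\times\widetilde{A})$ that are matched summand-by-summand by the cycle class map, so that $D^2$ on the product cleanly splits into $D^2(A\times A)$ and $D^1(A)$. If one prefers to avoid motivic language, an elementary alternative is to apply the ordinary blowup formulas for cohomology and Chow groups twice (first to $\widetilde{A}\times A\to A\times A$ as the blowup along $\{P_i\}\times A$, then to $\widetilde{A}\times\widetilde{A}\to\widetilde{A}\times A$ as the blowup along $\widetilde{A}\times\{Q_j\}$), carefully tracking the contributions at each stage.
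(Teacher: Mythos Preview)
Your reduction is fine and matches the paper's: descending $D^2$ along the finite surjective $\pi\times\pi$ to $\widetilde{A}\times\widetilde{A}$, and then peeling off the exceptional contributions (the paper does this by a direct K\"unneth/orthogonal-complement argument rather than Manin's motivic blowup formula, but the content is the same). The problem is the input you feed in for $D^2(A\times A)$.

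Invoking Clozel only gives $D^2(A\times A)$ for a positive-density set of primes $\ell$, and you acknowledge this yourself (``for those $\ell$''). But the lemma as stated is unqualified in $\ell$, and its use in \cref{thm:A}\eqref{Assertion:A-2} is precisely what distinguishes the Kummer case from the general abelian case in \cref{thm:A}\eqref{Assertion:A-3}, where the conclusion is explicitly restricted to infinitely many $\ell$. So as written your argument proves strictly less than required. The paper closes this gap by appealing instead to Zarhin's theorem that the Tate conjecture $T^2(A\times A)$ holds for the square of an abelian \emph{surface} over a finite field; combined with Weil's semisimplicity of Frobenius on abelian varieties and Milne's implication (Tate $+$ semisimplicity $\Rightarrow$ $D$), this yields $D^2(A\times A)$ for every $\ell\neq p$. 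Plugging that into your reduction repairs the proof completely.
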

\begin{proof}
Let $\tau \colon \wtilde A \to A $ be the blowup of $A$ at the sixteen $2$-torsion points so that $\pi\colon \wtilde A \to S$ is a finite surjective morphism.
It suffices to show that Conjecture~\hyperref[conj:Dk]{$D^2(\wtilde A\times \wtilde A)$} holds.
Indeed, suppose, for the time being, that this has been proved.
Let $Z_{S\times S}$ be a numerically trivial $2$-cycle on $S\times S$.
Since $(\pi\times \pi)_*(\pi\times \pi)^* = \deg(\pi)^2$, we see that $(\pi\times \pi)^*$ is injective and $(\pi\times \pi)_*$ is surjective.
Thus, the pullback $(\pi\times \pi)^*Z_{S\times S}$ on $\wtilde A\times \wtilde A$ is numerically trivial and hence homologically trivial by assumption.
Now, let $\alpha_{S\times S}\in H^4(S\times S)$ be arbitrary.
Choose $\alpha_{\wtilde A\times \wtilde A}\in H^4(\wtilde A\times \wtilde A)$ such that $(\pi\times \pi)_*\alpha_{\wtilde A\times \wtilde A} = \alpha_{S\times S}$.
It follows that
\begin{align*}
\cl_{S\times S}(Z_{S\times S}) \cup \alpha_{S\times S} &= \cl_{S\times S}(Z_{S\times S}) \cup (\pi\times \pi)_*\alpha_{\wtilde A\times \wtilde A} \\
&= \cl_{\wtilde A\times \wtilde A}((\pi\times \pi)^*Z_{S\times S}) \cup \alpha_{\wtilde A\times \wtilde A} \\
&= 0.
\end{align*}
We thus show that $Z_{S\times S}$ is homologically trivial and hence Conjecture~\hyperref[conj:Dk]{$D^2(S\times S)$} holds.

We now aim to prove that Conjecture~\hyperref[conj:Dk]{$D^2(\wtilde A\times \wtilde A)$} holds.
%Note first that $\wtilde A\times \wtilde A$ has the Chow--K\"unneth decomposition (see \cite[\S 6.1.5]{MNP13}):
%\[
%\ch(\wtilde A\times \wtilde A) = \bigoplus_{i=0}^8 \ch^i(\wtilde A\times \wtilde A) = \bigoplus_{i=0}^8 \bigoplus_{r+s=i} \ch^r(\wtilde A)\otimes \ch^r(\wtilde A).
%\]
%Also, by the blowup formula (see \cite[Examples~2.8.1]{MNP13}), one has $\ch(\wtilde A) = \ch(A) \oplus \bbL^{\oplus 16}$.
%Combining them together, one can show that
%\begin{align*}
%\ch^4(\wtilde A\times \wtilde A) &= \ch^4(A\times A) \oplus (\ch^2(A) \otimes \bbL^{\oplus 16}) \oplus (\bbL^{\oplus 16} \otimes \ch^2(A)) \oplus (\bbL^{\oplus 16}\otimes \bbL^{\oplus 16}) \\
%\ch^3(\wtilde A\times \wtilde A) &= \ch^3(A\times A) \oplus (\ch^1(A) \otimes \bbL^{\oplus 16}) \oplus (\bbL^{\oplus 16} \otimes \ch^1(A)) \\
%\ch^2(\wtilde A\times \wtilde A) &= \ch^2(A\times A) \oplus (\ch^0(A) \otimes \bbL^{\oplus 16}) \oplus (\bbL^{\oplus 16} \otimes \ch^0(A)).
%\end{align*}
%It thus follows from \cite[Proposition~7.6.1]{MNP13} that
%\begin{align*}
%&\quad \ \CH^2(\wtilde A\times \wtilde A)_\bQ = \bigoplus_{i=2}^4 \CH^2(\ch^i(\wtilde A\times \wtilde A)) \\
%&\isom \CH^2(A\times A)_\bQ \oplus (\CH^1(A)_\bQ \otimes \bQ^{\oplus 16}) \oplus (\bQ^{\oplus 16} \otimes \CH^1(A)_\bQ) \oplus (\bQ^{\oplus 16} \otimes \bQ^{\oplus 16}).
%\end{align*}
First, by the K\"unneth formula, we have that
\[
H^4(\wtilde A \times \wtilde A) \isom H^4(A\times A) \oplus (H^2(A) \otimes \bQ_\ell^{\oplus 16}) \oplus (\bQ_\ell^{\oplus 16} \otimes H^2(A)) \oplus (\bQ_\ell^{\oplus 16} \otimes \bQ_\ell^{\oplus 16}),
\]
where $\bQ_\ell^{\oplus 16}$ represents the $\bQ_\ell$-subspace of $H^2(\wtilde A)$ generated by the sixteen $\tau$-exceptional curves on $\wtilde A$.
Note that the above decomposition is orthogonal with respect to the cup product.
Thanks to \cite[Theorem~3.5]{Zarhin94}, the Tate conjecture $T^2(A\times A)$ holds for the square $A\times A$ of $A$ (see \cite{Tate94} for details on Tate's conjectures).
Note that by Weil's work \cite{Weil48}, the Frobenius endomorphism acts semisimply on \'etale cohomology of abelian varieties.
It thus follows from \cite[Propositions~8.2 and 8.4]{Milne86-AJM} that the standard conjecture \hyperref[conj:Dk]{$D^2(A\times A)$} holds (see also \cite[Proposition~2.6]{Tate94}).
In particular, we have an orthogonal decomposition of $H^4(A\times A)$ as $H_{\alg}^4(A\times A) \oplus H_{\tr}^4(A\times A)$ with respect to the cup product by \cref{lemma:decomposition}.
It is also well known that there is an orthogonal decomposition of $H^2(A)$ as $H_{\alg}^2(A) \oplus H_{\tr}^2(A)$.
Therefore, the algebraic part $H_{\alg}^4(\wtilde A \times \wtilde A)$ of $H^4(\wtilde A \times \wtilde A)$ is equal to
\[
H_{\alg}^4(A\times A) \oplus (H_{\alg}^2(A) \otimes \bQ_\ell^{\oplus 16}) \oplus (\bQ_\ell^{\oplus 16} \otimes H_{\alg}^2(A)) \oplus (\bQ_\ell^{\oplus 16} \otimes \bQ_\ell^{\oplus 16}),
\]
and the transcendental part $H_{\tr}^4(\wtilde A \times \wtilde A)$ of $H^4(\wtilde A \times \wtilde A)$, i.e., the orthogonal complement of $H_{\alg}^4(\wtilde A \times \wtilde A)$ with respect to the cup product, equals
\[
H_{\tr}^4(A\times A) \oplus (H_{\tr}^2(A) \otimes \bQ_\ell^{\oplus 16}) \oplus (\bQ_\ell^{\oplus 16} \otimes H_{\tr}^2(A)).
\]
We thus have an orthogonal decomposition $H^4(\wtilde A \times \wtilde A) = H_{\alg}^4(\wtilde A \times \wtilde A) \oplus H_{\tr}^4(\wtilde A \times \wtilde A)$.
By \cref{lemma:decomposition} again, Conjecture~\hyperref[conj:Dk]{$D^2(\wtilde A\times \wtilde A)$} holds and hence \cref{lemma:D-Kummer2} follows.
\end{proof}

\begin{proof}[Proof of \cref{thm:A}\eqref{Assertion:A-2}]
Let $S$ be a Kummer surface over $\ol\bF_p$.
Then \cref{conj:NC} holds on $S$ by \cref{lemma:Gr-Kummer,lemma:D-Kummer2}, and \cref{thm:C}(1).
The rest follows from \cref{thm:C}(2) and (4).
\end{proof}

\begin{proof}[Proof of \cref{thm:A}\eqref{Assertion:A-3}]
Let $X$ be an abelian variety over $\ol \bF_p$.
Then there is a set of primes of positive density such that the standard conjecture \hyperref[conj:Dk]{$D^n(X\times X)$} holds for $\ell$-adic \'etale cohomology whenever $\ell$ is in this set; see Clozel \cite{Clozel99}.
Pick such a prime $\ell$.
Then \cref{conj:NC} and hence \cref{conj:DDC} hold on $X$ for this specific $\ell$ by \cref{lemma:Gr-holds-AV} and \cref{thm:C}\eqref{Assertion:C-1}.
\end{proof}

\begin{proof}[Proof of \cref{thm:D}]
Let $f$ be an effective finite correspondence of the abelian variety $X$.
Then $f^{\circ m}$ is effective for any $m\in \bN_{>0}$ (see \cref{rmk:finite-corr-comp}).
So, similar to the proof of the abelian variety case of \cref{thm:A}\eqref{Assertion:A-1}, \cref{thm:D} follow from \cref{lemma:Gr-holds-AV} and \cref{thm:B}\eqref{Assertion:B-2} with "dynamical correspondence" replaced by "effective finite correspondence" (see also \cref{rmk:finite-corr-alg-stable}).
\end{proof}

%%%%%%%%%%%%%%%%%%%%%%%%%%%%%%%%%%%%%%%%%%%%%%%%%%%%%%%%%%

%\linespread{1.1}

%\bibliographystyle{amsplain}
\bibliographystyle{amsalpha}
\bibliography{../mybib}

\end{document}